\renewcommand{\baselinestretch}{1.175}
\theoremstyle{plain}
\newtheorem{theorem}{Theorem}[section]
\newtheorem{lemma}[theorem]{Lemma}
\newtheorem{corollary}[theorem]{Corollary}
\newtheorem{proposition}[theorem]{Proposition}
\newtheorem{observation}[theorem]{Observation}
\theoremstyle{definition}
\newtheorem{open}[theorem]{Open Problem}
\newcommand{\thmlabel}[1]{\label{thm:#1}}
\newcommand{\thmref}[1]{Theorem~\ref{thm:#1}}
\newcommand{\twothmref}[2]{Theorems~\ref{thm:#1} and \ref{thm:#2}}
\newcommand{\lemlabel}[1]{\label{lem:#1}}
\newcommand{\lemref}[1]{Lemma~\ref{lem:#1}}
\newcommand{\twolemref}[2]{Lemmas~\ref{lem:#1} and \ref{lem:#2}}
\newcommand{\eqnlabel}[1]{\label{eqn:#1}}
\newcommand{\eqnref}[1]{\eqref{eqn:#1}}
\newcommand{\figlabel}[1]{\label{fig:#1}}
\newcommand{\figref}[1]{Figure~\ref{fig:#1}}
\newcommand{\seclabel}[1]{\label{sec:#1}}
\newcommand{\secref}[1]{Section~\ref{sec:#1}}
\newcommand{\corlabel}[1]{\label{cor:#1}}
\newcommand{\corref}[1]{Corollary~\ref{cor:#1}}
\newcommand{\proplabel}[1]{\label{prop:#1}}
\newcommand{\propref}[1]{Proposition~\ref{prop:#1}}
\newcommand{\obslabel}[1]{\label{obs:#1}}
\newcommand{\obsref}[1]{Observation~\ref{obs:#1}}
\newcommand{\applabel}[1]{\label{app:#1}}
\newcommand{\appref}[1]{Appendix~\ref{app:#1}}
\newcommand{\Figure}[4][htb]{
\begin{figure}[#1]
	\vspace*{1ex}
	\begin{center}#3\end{center}
	\vspace*{-1ex}
	\caption{\figlabel{#2}#4}
\end{figure}}
\newcommand{\remove}[1]{}
\newcommand{\half}{\ensuremath{\protect\tfrac{1}{2}}}
\newcommand{\bracket}[1]{\ensuremath{\protect\left(#1\right)}}
\newcommand{\N}{\ensuremath{\mathbb{N}}}
\newcommand{\Z}{\ensuremath{\mathbb{Z}}}
\newcommand{\tw}[1]{\ensuremath{\textup{\textsf{tw}}(#1)}}
\newcommand{\pw}[1]{\ensuremath{\textup{\textsf{pw}}(#1)}}
\newcommand{\X}{\ensuremath{\mathcal{X}}}
\newcommand{\XX}{\ensuremath{\widehat{\mathcal{X}}}}
\newcommand{\PP}{\ensuremath{\mathcal{P}}}
\newcommand{\PPP}{\ensuremath{\widehat{\mathcal{P}}}}
\newcommand{\down}[1]{\ensuremath{\widehat{#1}}}
\newcommand{\up}[1]{\ensuremath{\widecheck{#1}}}
\renewcommand{\up}[1]{\ensuremath{\overline{#1}}}
\newcommand{\D}[1]{\ensuremath{\mathcal{D}_{#1}}}
\newcommand{\DD}[1]{\ensuremath{\widehat{\mathcal{D}}_{#1}}}
\newcommand{\C}[1]{\ensuremath{\mathcal{C}_{#1}}}
\newcommand{\CC}[1]{\ensuremath{\widehat{\mathcal{C}}_{#1}}}
\newcommand{\T}[1]{\ensuremath{\mathcal{T}_{#1}}}
\newcommand{\TT}[1]{\ensuremath{\widehat{\mathcal{T}}_{#1}}}
\newcommand{\W}[1]{\ensuremath{\mathcal{P}_{#1}}}
\newcommand{\WW}[1]{\ensuremath{\widehat{\mathcal{P}}_{#1}}}
\newcommand{\B}{\ensuremath{\mathcal{B}}}
\newcommand{\vl}{\ensuremath{V_{\ell}}}
\newcommand{\vh}{\ensuremath{V_{h}}}
\newcommand{\dist}{\mathop{{\rm dist}}}
\begin{document}

\title{Graph Minors and Minimum Degree}

\author{Ga\v{s}per Fijav\v{z}}
\address{\newline Faculty of Computer and Information Science\newline University of Ljubljana\newline Ljubljana, Slovenia}
\email{gasper.fijavz@fri.uni-lj.si}

\author{David R.~Wood}
\address{\newline  Department of Mathematics and Statistics\newline The University of Melbourne\newline Melbourne, Australia}
\email{woodd@unimelb.edu.au}

\date{\today}

\begin{abstract}
Let $\mathcal{D}_k$ be the class of graphs for which every minor has minimum degree at most $k$. Then $\mathcal{D}_k$ is closed under taking minors. By the Robertson-Seymour graph minor theorem, $\mathcal{D}_k$ is characterised by a finite family of minor-minimal forbidden graphs, which we denote by $\widehat{\mathcal{D}}_k$. This paper discusses $\widehat{\mathcal{D}}_k$ and related topics. We obtain four main results:
\begin{enumerate}
\item We prove that every $(k+1)$-regular graph with less than $\frac{4}{3}(k+2)$ vertices is in $\widehat{\mathcal{D}}_k$, and this bound is best possible.
\item We characterise the graphs in $\widehat{\mathcal{D}}_{k+1}$ that can be obtained from a graph in $\widehat{\mathcal{D}}_k$ by adding one new vertex.
\item  For $k\leq 3$ every graph in $\widehat{\mathcal{D}}_k$ is $(k+1)$-connected, but for large $k$, we exhibit graphs in $\widehat{\mathcal{D}}_k$ with connectivity $1$. In fact, we construct graphs in  $\mathcal{D}_k$ with arbitrary block structure.
\item We characterise the complete multipartite graphs in $\widehat{\mathcal{D}}_k$, and prove analogous characterisations with minimum degree replaced by connectivity, treewidth, or pathwidth.
\end{enumerate}
\end{abstract}

\maketitle

\tableofcontents

\newpage
%%%%%%%%%%%%%%%%%%%%%%%%%%%%%%%%%%%%%%%%%%%%%%%%%%%%%%%%%%%%%%%%%%%%%%%%%%%%%%%
\section{Introduction}
%%%%%%%%%%%%%%%%%%%%%%%%%%%%%%%%%%%%%%%%%%%%%%%%%%%%%%%%%%%%%%%%%%%%%%%%%%%%%%%

%The concept of a graph minor itself dates back to \citet{Wagner37}, and connects topological and structural properties of graphs on one hand with combinatorial properties.

%\emph{Minor-monotone} graph parameters, graph genus and many other examples such as Colin de Verdiere's parameter $\mu$ \citep{Verdiere90}, serve as first indicators of the topological and structural complexity of a graph. On the other hand, a graph parameter, even if describing the topological structure of a graph, may not work well with graph minors. Observe the crossing number of a graph, for example.

The theory of graph minors developed by \citet{RS-GraphMinors} is one of the most important in graph theory influencing many branches of mathematics. Let \X\ be a minor-closed class of graphs\footnote{All graphs considered in this paper are undirected, simple, and finite. 

To \emph{contract} an edge $vw$ in a graph $G$ means to delete $vw$, identify $v$ and $w$, and replace any parallel edges by a single edge. The contracted graph is denoted by $G/vw$. If $S\subseteq E(G)$ then $G/S$ is the graph obtained from $G$ by contracting each edge in $S$ (while edges in $S$ remain in $G$). The graph $G/S$ is called a \emph{contraction minor} of $G$.

A graph $H$ is a \emph{minor} of a graph $G$ if a graph isomorphic to $H$ can be obtained from a subgraph of $G$ by contracting edges. That is, $H$ can be obtained from $G$ by a sequence of edge contractions, edge deletions, or vertex deletions. For each vertex $v$ of $H$, the set of vertices of $G$ that are contracted into $v$ is called a \emph{branch set} of $H$. A class \X\ of graphs is \emph{minor-closed} if every minor of every graph in \X\ is also in \X, and some graph is not in \X.

The \emph{join} of graphs $G$ and $H$, denoted by $G*H$, is the graph obtained by adding all possible edges between disjoint copies of $G$ and $H$. Let $\overline{G}$ denote the complement of a graph $G$.}. A graph $G$ is a \emph{minimal forbidden minor} of \X\ if $G$ is not in \X\ but every proper minor of $G$ is in \X. Let \XX\ be the set of minimal forbidden minors of \X. By the graph minor theorem of \citet{RS-GraphMinors}, \XX\ is a finite set. For various minor-closed classes the list of minimal forbidden minors is known. Most famously, if \PP\ is the class of planar graphs, then the Kuratowski-Wagner theorem states that $\PPP=\{K_5,K_{3,3}\}$.
However, in general, determining the minimal forbidden minors for a particular minor-closed class is a challenging problem.

%Choose an arbitrary integer k. If ' is a minor-monotone graph parameter, then the class of graphs G satisfying '(G) < k is closed under taking minors, and its set of forbidden minors is finite by the graph minor theorem of Robertson and Seymour [10].

%There are, however, a couple of ways to transform a graph parameter ' into a related minor-monotone one. For a fixed graph G, we can compute the maximal value of '(H), where we let H run over all minors of G. Alternatively, we can compute the minimal value of '(H0), where H0 is an arbitrary graph, which contains G as a minor [4]. The minor-crossing number [1,2] is an example of the latter approach.

Let $\delta(G)$ be the minimum degree of a graph $G$. Let $\D{k}$ be the class of graphs $G$ such that every minor of $G$ has minimum degree at most $k$. Then \D{k} is minor-closed. Let \DD{k} be the set of minimal forbidden minors of \D{k}. By the graph minor theorem, \DD{k} is finite for each $k$. The structure of graphs in \DD{k} is the focus of this paper. For small values of $k$, it is known that $\DD{0}=\{K_2\}$ and $\DD{1}=\{K_3\}$ and $\DD{2}=\{K_4\}$ and $\DD{3}=\{K_5,K_{2,2,2}\}$; see \secref{Basics}. Determining \DD{4} is an open problem.

The majority this paper studies the case of general $k$ rather than focusing on small values. Our first main result shows that, in some sense, there are many graphs in \DD{k}. In particular, every sufficiently small $(k+1)$-regular graph is in \DD{k}. This result is proved in \secref{SmallRegularGraphs}.

\begin{theorem}
\thmlabel{SmallRegular}
Every $(k+1)$-regular graph with less than $\frac{4}{3}(k+2)$ vertices is in \DD{k}. Moreover, for all $k\equiv1\pmod{3}$ there is a $(k+1)$-regular graph on $\frac{4}{3}(k+2)$ vertices that is not in \DD{k}.
\end{theorem}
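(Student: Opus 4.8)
The plan is to reduce the membership $G\in\DD{k}$ to a statement purely about minimum degrees of proper minors. A graph $G$ lies in \DD{k} precisely when $\delta(G)\ge k+1$ and every proper minor of $G$ has minimum degree at most $k$: since $|V|+|E|$ strictly drops under every edge or vertex deletion and every edge contraction, a minor of a proper minor of $G$ is again a proper minor of $G$, so it suffices to control proper minors rather than all their minors. As a $(k+1)$-regular graph has minimum degree $k+1$, what remains is to show that if $G$ is $(k+1)$-regular on $n_0<\frac{4}{3}(k+2)$ vertices then no proper minor of $G$ has minimum degree $\ge k+1$. I would argue by contradiction: suppose $H$ is such a proper minor, realised by disjoint connected branch sets $B_1,\dots,B_n\subseteq V(G)$ with free set $U:=V(G)\setminus\bigcup_iB_i$ and $c:=|U|$; since each vertex of $H$ has $\ge k+1$ neighbours, $n\ge k+2$. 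The pivotal observation I would establish is that \emph{whenever $B_i=\{v\}$ is a singleton branch set, $v$ has no neighbour in $U$ and the $k+1$ neighbours of $v$ lie in $k+1$ distinct branch sets} -- otherwise the vertex of $H$ corresponding to $B_i$ would have degree at most $k$.

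The argument then splits on $c$. If $c=0$ and every $B_i$ is a singleton, then $H$ is a spanning subgraph of $G$ with $\delta(H)\ge k+1=\Delta(G)$, hence $(k+1)$-regular and equal to $G$, contradicting properness. If $c=0$ but some $B_j$ has $|B_j|\ge 2$, I would pick distinct $x,y\in B_j$ and note, via the observation, that no common neighbour of $x$ and $y$ lies in a singleton branch set; hence $N(x)\cap N(y)$ lies (after deleting $x,y$) in the union $L$ of the non-singleton branch sets, so $|N(x)\cap N(y)|\le|L|-2$. Since $(k+1)$-regularity gives $|N(x)\cap N(y)|\ge 2(k+1)-n_0$, and since there are at most $n_0-n$ non-singleton branch sets (so $|L|\le n_0-n+(\#\text{large branch sets})$), combining these with $n\ge k+2$ forces $n_0\ge\frac{4}{3}(k+2)$, a contradiction. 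If instead $c\ge 1$, I would fix a free vertex $u\in U$; by the observation no neighbour of $u$ is the vertex of a singleton branch set, so all $k+1$ neighbours of $u$ lie in $U\cup L$, giving $k+1\le(|U|-1)+|L|$. Bounding $|L|$ as before via the number of non-singleton branch sets, and using $c\ge1$ and $n\ge k+2$, a short computation again forces $n_0\ge\frac{3k+7}{2}>\frac{4}{3}(k+2)$, a contradiction. Hence no such $H$ exists and $G\in\DD{k}$.

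For the ``best possible'' assertion, write $k=3m-2$, so that $k+1=3m-1$ and $\frac{4}{3}(k+2)=4m$. I would take $G$ on vertex set $A\cup B$ with $|A|=|B|=2m$, with $G[A]$ and $G[B]$ complete, and, writing $A=\{a_0,\dots,a_{2m-1}\}$ and $B=\{b_0,\dots,b_{2m-1}\}$ with indices read modulo $2m$, join $a_i$ to $b_j$ exactly when $j-i\in\{0,1,\dots,m-1\}$. Each vertex then has $2m-1$ neighbours in its own side and $m$ on the other, so $G$ is $(3m-1)$-regular on $4m$ vertices. The pairs $\{b_j,b_{j+m}\}$ for $j=0,\dots,m-1$ are edges of the clique $G[B]$, hence valid branch sets; contracting them leaves $A$ together with $m$ new vertices, and because any block of $m$ consecutive residues modulo $2m$ meets $\{j,j+m\}$ in exactly one element, each $a_i$ is adjacent to exactly one of $b_j,b_{j+m}$. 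Therefore the contraction is $K_{3m}$, a proper minor of $G$ with minimum degree $3m-1=k+1$, so $G\notin\DD{k}$.

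The main obstacle I anticipate is twofold. First is spotting the singleton-branch-set observation, which is the linchpin of the whole argument and the only place the hypothesis ``minimum degree $\ge k+1$'' is leveraged structurally rather than through counting. Second is organising the two cases $c=0$ and $c\ge1$ so that the neighbourhood count forced by $(k+1)$-regularity collides with the size bound coming from $n\ge k+2$ and $n_0<\frac{4}{3}(k+2)$, and checking that both collisions become tight exactly at $n_0=\frac{4}{3}(k+2)$; this tightness is precisely what makes the bound best possible and what dictates the shape of the extremal construction above.
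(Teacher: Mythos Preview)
Your argument is correct in both parts, but it takes a somewhat different route from the paper.

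For the upper bound, the paper first invokes \lemref{BasicDegree} to reduce to \emph{contraction} minors only (conditions (D1), (D3), (D4) are automatic for a connected regular graph), then argues via triangles: every edge of a $(k+1)$-regular graph on $n_0$ vertices lies in at least $2k+2-n_0$ triangles, and if $n_0<\tfrac{4}{3}(k+2)$ this exceeds $2n_0-2k-5$; since at most $2(n_0-|V(H)|)\le 2n_0-2k-4$ vertices are touched by contracted edges, some common neighbour of a contracted edge survives as a singleton and drops to degree $\le k$. You instead work with arbitrary proper minors via branch sets and a free set $U$, and your ``pivotal observation'' about singleton branch sets is the contrapositive of the paper's triangle step. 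The $c=0$ case of your argument is essentially the paper's proof unpacked; the $c\ge 1$ case is extra work that the paper avoids entirely by the reduction to contraction minors. Both routes hit the same inequality $4(k+2)\le 3n_0$ at the end. What your approach buys is self-containment (you never need \lemref{BasicDegree}); what the paper's approach buys is brevity, since the free-vertex case simply disappears.

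For the tightness construction your example is different from the paper's. The paper takes the complement of two disjoint copies of $K_{m,m}$ (with $m=\tfrac{k+2}{3}$) and contracts an $m$-edge matching to obtain $K_{3m}$. Your graph (two cliques $K_{2m}$ joined by a circulant bipartite graph) is not isomorphic to this, but your verification that contracting the antipodal pairs $\{b_j,b_{j+m}\}$ yields $K_{3m}$ is correct, so it serves equally well.
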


Our second main result characterises the graphs in \DD{k+1} that can be obtained from a graph in \DD{k} by adding one new vertex. 

\begin{theorem}
\thmlabel{AddVertex}
Let $S$ be a set of vertices in a graph $G\in\D{k}$. Let $G'$ be the graph obtained from $G$ by adding one new vertex adjacent to every vertex in $S$. Then $G'\in\DD{k+1}$ if and only if $S$ is the set of vertices of degree $k+1$ in $G$. 
\end{theorem}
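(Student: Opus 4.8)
We must prove an "if and only if" about when $G'$ (the graph $G$ plus one new apex vertex adjacent to a chosen set $S$) is a minimal forbidden minor for $\D{k+1}$. First note $G \in \D{k}$ means every minor of $G$ has minimum degree at most $k$. The new vertex $v$ has degree $|S|$ in $G'$. The plan is to handle the two directions by analysing what minors of $G'$ look like. The key structural observation is that any minor $H$ of $G'$ either does not use $v$ at all (so $H$ is a minor of $G$, hence $\delta(H)\le k \le k+1$, fine), or $v$ survives in a branch set $B_v$. If some vertex of $S$ is contracted into $B_v$, then we can instead realize the same minor as a minor of $G$ together with possibly one extra... more carefully: contracting edge $vs$ for $s\in S$ in $G'$ yields a graph whose every proper minor is a minor of $G$ (since $v$ is only adjacent to $S$, identifying $v$ with $s\in S$ gives a minor of $G$ with possibly a higher degree at the merged vertex — one needs to check this is still at most $k+1$, or argue it is a minor of $G$ outright when $s$'s neighbourhood absorbs $v$'s). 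So the crux is: the only minors of $G'$ that can have minimum degree $>k$ are those in which $v$ forms its own branch set adjacent to (a subset of) the branch sets meeting $S$.

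**The forward direction ($G' \in \DD{k+1} \Rightarrow S = \{$vertices of degree $k+1$ in $G\}$).** Suppose first some vertex $w\in S$ has degree $\ne k+1$ in $G$. If $\deg_G(w) \le k$, consider deleting all of $S\setminus\{w\}$ and... actually the cleaner approach: $G'$ is a forbidden minor for $\D{k+1}$ means $G'$ itself has a minor of minimum degree $\ge k+2$; call it $M$. In $M$, vertex $v$ (if present) has degree $\le |S|$; more importantly, contracting $v$ into a neighbour or deleting $v$ should produce a minor of $G$ with minimum degree $\ge k+1$, contradicting $G\in\D{k}$ — UNLESS the high-degree minor genuinely needs $v$ as a separate vertex AND needs every vertex of $S$ to have its degree boosted by exactly $1$ (by the edge to $v$) to reach $k+2$, i.e. every vertex of $S$ has degree exactly $k+1$ in the corresponding minor of $G$. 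Running this backwards and using minimality (every proper minor of $G'$ is in $\D{k+1}$), one forces $S$ to be contained in the degree-$(k+1)$ vertices; and if some degree-$(k+1)$ vertex $u\notin S$, then the minor witnessing $G\notin \D{k}$... wait, $G\in\D{k}$. Let me restructure: take the minor $M$ of $G'$ with $\delta(M)\ge k+2$. Its branch sets partition a subset of $V(G')$. The branch set $B_v\ni v$: every vertex in $M$ adjacent to the vertex $b_v$ corresponding to $B_v$ must, if we delete $v$, lose a potential edge only through $S$. The high-degree minor, restricted to $G$ (deleting $v$), gives a minor $M'$ of $G$; since $\delta(M)\ge k+2$ and $G\in\D k$ forces $\delta(M')\le k$, the only vertices of $M$ whose degree drops when passing to $M'$ are neighbours of $b_v$, and each drops by exactly $1$ (one edge to $b_v$), while $b_v$ itself is deleted. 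So $\delta(M')\ge k+1$ among non-neighbours of $b_v$ — but that's $\le k$, contradiction unless every vertex of $M'$ is a neighbour of $b_v$, i.e. $M' = M - b_v$ with $b_v$ adjacent to all of $M'$, and every vertex of $M'$ has degree exactly $k+1$ in $M'$. Now $M'$ is a minor of $G$ with every vertex of degree $k+1$; the branch sets of $M'$ adjacent to $b_v$ correspond to branch sets of $M$ that met $S$ in $G'$. Using minimality of $G'$ and a counting/degree argument, conclude $M' = G$ with all branch sets singletons (else a proper minor of $G'$ would already be forbidden), hence every vertex of $G$ has degree $k+1$ and $S = V(G)$ — or, in the general case, $M'$ exhibits that $G$ has a spanning-ish minor forcing $S$ exactly. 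This last forcing is the main obstacle and will need care about minimality.

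**The backward direction ($S = \{$degree-$(k+1)$ vertices of $G\} \Rightarrow G'\in \DD{k+1}$).** Here we must show (a) $G'\notin \D{k+1}$, i.e. $G'$ has a minor of minimum degree $\ge k+2$, and (b) every proper minor of $G'$ is in $\D{k+1}$. For (a): I would try to show $G'$ itself, or $G'$ after deleting the vertices of $G$ of degree $\le k$, has minimum degree $\ge k+2$. If $G$ happens to be $(k+1)$-regular then $S = V(G)$, $G' = K_1 * G$ has minimum degree $\min(k+2, |V(G)|)$; assuming $|V(G)|\ge k+2$ this is $k+2$ and we are done for (a). In general one deletes low-degree vertices of $G$ one at a time; the subtlety is that deleting a low-degree vertex may lower others below $k+1$ — but those newly-low vertices are no longer in $S$ relative to the new graph... this requires $G \in \D k$ to control the process, perhaps via: repeatedly delete a vertex of degree $\le k$; since $G\in\D k$ this can continue, but we want to stop with a nonempty graph. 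Actually $G'$ has a minor of min degree $\ge k+2$ iff ... we need $G$ to have a minor $M'$ with all degrees $\le k+1$ that is "$(k+1)$-edge-rich" — hmm, we need a minor $M'$ of $G$ with $\delta(M')$ possibly small but where the vertices of degree exactly $k+1$ form a dominating-ish set. This is the real content and the hardest part: I expect to need to iterate deletion/contraction on $G$ guided by its membership in $\D k$ to produce the witness. For (b): a proper minor of $G'$ is obtained by deleting/contracting; if it avoids $v$ it's a minor of $G \in \D k \subseteq \D{k+1}$; if it contracts an edge at $v$, the resulting graph is a minor of $G$ (merging $v$ with some $s\in S$, whose closed neighbourhood... one checks absorbs $v$) hence in $\D{k+1}$; if it deletes an edge $vs$, then $v$ now has degree $|S|-1$ but more to the point the new neighbourhood-of-$v$ set is $S\setminus\{s\}$, which is a strict subset of the degree-$(k+1)$ vertices, and then inductively/directly this graph is in $\D{k+1}$ because the forward direction's analysis shows it can't be forbidden; if it deletes a vertex or edge of $G$, the set $S$ may no longer equal the degree-$(k+1)$ vertices (degrees dropped), again landing us in $\D{k+1}$ by the forward characterization applied to the smaller graph. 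The main obstacle throughout is the tight bookkeeping in showing the witness minor in (a) exists and in showing minimality in (b); both hinge on exploiting $G\in\D k$ to say "we can always contract/delete down to low min degree" while tracking which vertices retain degree exactly $k+1$.
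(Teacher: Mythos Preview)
Your proposal is derailed by a typo in the hypothesis: the theorem should read $G\in\DD{k}$, not $G\in\D{k}$. The abstract states the result for ``a graph in $\DD{k}$'', and both halves of the paper's proof (Lemmas~6.1 and~6.2) use that $\delta(G)=k+1$ and invoke \propref{ManyLows}, which is stated only for $G\in\DD{k}$. With the literal hypothesis $G\in\D{k}$ the backward implication is simply false: take $G=K_{k+2}-e$, which lies in $\D{k}$ and whose degree-$(k+1)$ vertices are the $k$ vertices off the missing edge; adding a new vertex adjacent to those $k$ vertices produces a graph of minimum degree $k$, which is not in $\DD{k+1}$. This is exactly why your part (a) of the backward direction flounders (``we need a minor $M'$ of $G$ with \ldots'') --- under the literal hypothesis no such witness exists.

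With the intended hypothesis $G\in\DD{k}$ the paper's argument is short. Backward direction (\lemref{Construction}): every vertex of $G$ has degree $\geq k+2$ in $G'$, and $\deg_{G'}(v)\geq k+2$ by \propref{ManyLows}, so $\delta(G')=k+2$. If $H$ is a proper minor of $G'$ and $v$ lies in branch set $B$, then $H-B$ is a minor of $G$; either $H-B=G$ (forcing $B=\{v\}$ and $H=G'$, not proper) or $H-B$ is a proper minor of $G$, hence has a vertex of degree $\leq k$, which has degree $\leq k+1$ in $H$. Forward direction (\lemref{AddVertex}): since $\delta(G')=k+2$, every degree-$(k+1)$ vertex of $G$ lies in $S$; if some $y\in S$ has $\deg_G(y)\geq k+2$, then \propref{ManyLows} gives $|S|\geq k+3$, so $v$ and $y$ are adjacent vertices of degree $\geq k+3$ in $G'$, contradicting condition (D4) of \lemref{BasicDegree}.

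Your forward sketch chases an arbitrary minor $M$ of $G'$ with $\delta(M)\geq k+2$, but minimality of $G'$ forces $M=G'$ itself, so the branch-set analysis is unnecessary --- it is pure degree counting in $G'$. Your outline for (b) in the backward direction does contain the right idea (delete the branch set containing $v$ and appeal to $G$), but it is buried under complications that only arise from the wrong hypothesis.
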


\thmref{AddVertex} is proved in \secref{Construction} along with various corollaries of 
\twothmref{SmallRegular}{AddVertex}.

It is natural to expect that graphs in \DD{k} are, in some sense, highly connected. For example for $k\leq3$ all the graphs in \DD{k} are $(k+1)$-connected. However, this is not true in general. In \secref{Basics} we exhibit a graph in \DD{4} with connectivity $1$. In fact, our third main result, proved in \secref{BlockStructure}, constructs graphs in \DD{k} ($k\geq9$) with arbitrary block structure.

\begin{theorem}
Let $T$ be the block decomposition tree of some graph. 
Then for some $k$, $T$ is the block decomposition tree of some graph in \DD{k}.
\end{theorem}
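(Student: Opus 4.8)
The plan is to construct, for the given tree $T$ and some large integer $k$ (allowed to depend on $T$), a graph $G_T\in\DD{k}$ whose block--cut tree is $T$. Regard $T$ as a bipartite tree on \emph{block nodes} and \emph{cut nodes}; since $T$ is the block--cut tree of an actual graph, every cut node has degree at least $2$, so every leaf of $T$ is a block node. I will replace each block node $\beta$ by a $2$-connected graph $B_\beta$ (a ``brick''), replace each cut node by a single vertex shared among the bricks meeting it, and let $G_T$ be the resulting union. Since $2$-connected graphs glued at single vertices have precisely the glued vertices as cut vertices and the original graphs as blocks, $G_T$ will have block--cut tree $T$. It then remains to arrange $\delta(G_T)\ge k+1$, so that $G_T\notin\D{k}$, and to prove that every proper minor of $G_T$ lies in $\D{k}$.

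Each brick $B_\beta$ will consist of a graph $B_\beta^\circ\in\D{k}$ in which a prescribed set $D_\beta$ of vertices have degree $k$ and all others degree $k+1$, together with the cut vertices $c_1,\dots,c_d$ of $B_\beta$, where $c_i$ is joined to a set $S_i\subseteq D_\beta$ and the $S_i$ partition $D_\beta$. Then every non-cut vertex of $G_T$ has degree exactly $k+1$ --- a vertex of $D_\beta$ is bumped up to $k+1$ by its unique incident cut vertex, while every other vertex already has degree $k+1$ --- and the degree of a cut vertex equals the sum of the sizes $|S_i|$ over the bricks containing it. To make every such sum at least $k+1$, I pick at each cut node two of its incident block nodes to be its \emph{suppliers}: in a supplier brick the cut vertex is joined to $\ceil{(k+1)/2}$ vertices of $D_\beta$, and in each of its other bricks to just $2$ (which still leaves room to make $B_\beta$ $2$-connected). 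This fixes $|D_\beta|$ for each $\beta$, and I then let $B_\beta^\circ$ be any graph in $\D{k}$ with exactly $|D_\beta|$ vertices of degree $k$ and the rest of degree $k+1$: when $|D_\beta|$ is small one may delete a matching from a small $(k{+}1)$-regular graph, which lies in $\D{k}$ by \thmref{SmallRegular}, and in general one uses a larger graph of treewidth $k$ built from copies of $K_{k+1}$ (graphs of treewidth at most $k$ have a vertex of degree at most $k$ in every minor, hence lie in $\D{k}$). Taking $B_\beta^\circ$ large leaves enough room to spread the $S_i$ so that $B_\beta$ is $2$-connected. With these choices $\delta(G_T)\ge k+1$.

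The hard part, and the main obstacle, is minimality: one must show that $G_T-e$, $G_T-v$ and $G_T/e$ all lie in $\D{k}$ for every edge $e$ and vertex $v$. This is genuinely delicate, because membership in $\D{k}$ is \emph{not} inherited from the blocks --- indeed every block of $G_T$ lies in $\D{k}$ (which already needs checking, starting with each brick) yet $G_T$ does not, the minor of minimum degree at least $k+1$ appearing only through the interaction at the cut vertices. The crux will be a structural description of the minors $M$ of $G_T$ with $\delta(M)\ge k+1$: one wants to show that within each brick $B_\beta$ such an $M$ must contract $B_\beta^\circ$ onto a graph that is $(k{+}1)$-regular except at a few vertices sitting at the cut vertices, that these residual deficiencies can be repaired only by pooling, at each cut vertex, the contributions of \emph{all} incident bricks in exactly the way $G_T$ itself does, and hence that every vertex and every edge of $G_T$ is essential to every such $M$. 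Given this, deleting or contracting anything leaves a graph with no minor of minimum degree at least $k+1$, i.e.\ a graph in $\D{k}$, so $G_T\in\DD{k}$. Carrying out this analysis brick by brick and propagating it along $T$ is the technical heart of the proof, and it is here --- in having enough slack at every cut vertex and every deficient set --- that the freedom to choose $k$ large is consumed.
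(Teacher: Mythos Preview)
Your proposal is a plan rather than a proof: you correctly identify minimality (condition (D2) of \lemref{BasicDegree}) as ``the main obstacle'' and then leave it undone, saying only that ``carrying out this analysis brick by brick \dots\ is the technical heart of the proof.'' But with the bricks you describe --- arbitrary graphs $B_\beta^\circ\in\D{k}$, possibly built from copies of $K_{k+1}$ --- there is no reason the analysis you sketch should go through. You assert that in any minor $M$ of $G_T$ with $\delta(M)\ge k+1$, each brick must contract to something ``$(k{+}1)$-regular except at a few vertices sitting at the cut vertices,'' yet a generic graph in $\D{k}$ admits a wild variety of contraction minors with no such rigidity. Knowing only that $B_\beta^\circ\in\D{k}$ tells you every minor has \emph{some} vertex of degree at most $k$, but once bricks are glued at cut vertices, contractions can merge branch sets across bricks and there is no mechanism forcing a low-degree vertex to survive in $M$. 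The freedom to take $k$ large does not obviously help here; what is missing is not slack but structural control over each brick.

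The paper's construction is quite different and is engineered precisely to supply that control. The bricks are not arbitrary members of $\D{k}$ but specific \emph{horned graphs} $G_{d,4}$ and $G_{d,a,b}$: a copy of $K_{d+1}$ with one or two extra ``horn'' vertices attached via deleted matchings. For these graphs the paper proves a sharp local lemma (\lemref{contr}): any proper contraction minor that still contains an original (non-horn) vertex has an original vertex of degree less than $d$. This forces a dichotomy --- each block of $G$ is either left intact or contracted to a point --- and reduces the global minimality check to bookkeeping on the tree. That bookkeeping is itself nontrivial: the paper defines an edge labelling $\varphi$ on $T$ (\eqnref{F1F2}, \eqnref{F3}) whose values around each high vertex sum to $d$ (\lemref{p4}) and across each degree-$2$ low vertex sum to $d+2$ (\lemref{p3}), and this numerology is what makes the identified horns have degree exactly $d$ and what makes the contraction-of-inner-blocks argument in \thmref{main} work. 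Finally, general block trees are handled by first subdividing to get a low-high tree and then performing a ``bag extension'' that merges blocks back (\thmref{main2}). None of these ingredients --- the horned graphs, \lemref{contr}, the labelling $\varphi$, or the subdivision/extension trick --- appears in your proposal, and without something playing their role the minimality step is a genuine gap, not just a routine verification.
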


A complete characterisation of graphs in \DD{k} is probably hopeless. So it is reasonable to restrict our attention to particular subsets of \DD{k}. A graph is \emph{complete $c$-partite} if the vertices can be $c$-coloured so that two vertices are adjacent if and only if they have distinct colours. Let $K_{n_1,n_2,\dots,n_c}$ be the complete $c$-partite graph with $n_i$ vertices in the $i$-th colour class. Since every graph in \DD{k} for $k\leq 3$ is complete multipartite, it is natural to consider the complete multipartite graphs in \DD{k}. Our fourth main result characterises the complete multipartite graphs in \DD{k}. 

\begin{theorem}
\thmlabel{CompleteMultipartite}
For all $k\geq1$, a complete multipartite graph $G$ is in \DD{k} if and only if for some $b\geq a\geq1$ and $p\geq2$, $$G=K_{a,\underbrace{b,\dots,b}_p}\enspace,$$
such that $k+1=a+(p-1)b$ and if $p=2$ then $a=b$.
\end{theorem}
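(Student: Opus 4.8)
The plan is to prove the two implications separately, after fixing the right handle on ``proper minor of a complete multipartite graph''. Throughout I will use the elementary fact (established for small $k$ in the Basics section, and proved in general by the same argument: the minimum degree is at least $k+1$ because the graph is forbidden while all its proper minors are not, and at most $k+1$ because otherwise deleting one edge leaves a proper minor of minimum degree still exceeding $k$) that every graph in $\DD{k}$ has minimum degree exactly $k+1$. For $G=K_{m_1,\dots,m_q}$ with $m_1\le\dots\le m_q=:b$ and $n:=|V(G)|$, $\delta(G)=n-b$. The two structural observations I rely on are: contracting an edge $uv$ of a complete multipartite graph produces $K_1*(G-u-v)$, again complete multipartite, and deleting a vertex preserves complete multipartiteness; hence for any minor $H$ of $G$, contracting the branch sets and deleting the unused vertices produces a \emph{complete multipartite} graph $H'$ of which $H$ is a spanning subgraph, so $\delta(H)\le\delta(H')$, and $H'=G$ only when $H$ is itself a spanning subgraph of $G$. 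So it suffices to bound the minimum degree of (a)~spanning subgraphs $G-F$ with $F\neq\emptyset$ and (b)~complete multipartite graphs obtained from $G$ by $t\ge 1$ vertex deletions and edge contractions.

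For sufficiency, suppose $G=K_{a,\underbrace{b,\dots,b}_{p}}$ with $b\ge a\ge 1$, $p\ge 2$, $k+1=a+(p-1)b$, and $a=b$ when $p=2$; then $\delta(G)=n-b=k+1>k$, so $G\notin\D{k}$. Let $\mu$ be the number of parts of size $b$; the hypotheses force $\mu\ge 3$ (if $a<b$ then $p\ge 3$ and $\mu=p$; if $a=b$ then $\mu=p+1\ge 3$). Now let $H$ be any proper minor. If $H=G-F$ with $F\neq\emptyset$: every edge of $G$ has an endpoint in some size-$b$ part, hence of degree $n-b=\delta(G)$, so deleting an edge of $F$ drops that endpoint's degree to at most $k$, giving $\delta(H)\le k$. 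Otherwise $\delta(H)\le\delta(H')$ for a complete multipartite $H'$ obtained by $t\ge 1$ operations, and, writing $M$ for its largest part, $\delta(H')=(n-t)-M$: if $t\ge b$ this is at most $(n-b)-1=k$; if $t<b$, then since each operation decreases at most two of $G$'s parts by one, the total decrease spread over the $\mu\ge 3$ size-$b$ parts is at most $2t<3t\le\mu t$, so some size-$b$ part loses at most $t-1$ vertices and survives in $H'$ as a part of size $\ge b-t+1$, whence $M\ge b-t+1$ and $\delta(H')\le(n-t)-(b-t+1)=n-b-1=k$. Either way $\delta(H)\le k$, so $G\in\DD{k}$.

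For necessity, let $G=K_{m_1,\dots,m_q}$ with $m_1\le\dots\le m_q=b$ lie in $\DD{k}$, so $\delta(G)=n-b=k+1$. If two distinct parts both had size $<b$, choosing $u,v$ in these parts gives $\deg_G(u),\deg_G(v)>\delta(G)$, so $\delta(G-uv)=\delta(G)>k$ with $G-uv$ a proper minor --- a contradiction; hence at most one part is smaller than $b$, i.e.\ $G=K_{a,\underbrace{b,\dots,b}_{p}}$ with $a:=m_1\le b$ and $p:=q-1$. If $p=1$ then $G=K_{a,b}$ with $a=\delta(G)=k+1\ge 2$: when $a<b$, $G$ minus a vertex of the size-$b$ part is $K_{a,b-1}$ of minimum degree $a=\delta(G)$, and when $a=b$, contracting any edge gives $K_{1,a-1,a-1}$ of minimum degree $a=\delta(G)$ --- in both cases a proper minor attains $\delta(G)$, a contradiction, so $p\ge 2$. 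Finally, if $p=2$ and $a<b$, contracting an edge between the two size-$b$ parts yields $K_{1,a,b-1,b-1}$, whose largest part is $b-1$ (as $a\le b-1$), of minimum degree $(a+2b-1)-(b-1)=a+b=\delta(G)$ --- a contradiction; hence $a=b$ when $p=2$. Since $k+1=\delta(G)=a+(p-1)b$, $G$ has precisely the stated form.

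The step I expect to be the main obstacle is the sufficiency argument for \emph{arbitrary} proper minors: one must justify the reduction of a general minor to a complete multipartite ``contraction minor'' together with a pure spanning subgraph --- and in particular be careful that deleting edges alone can preserve the minimum degree, which is exactly the mechanism the necessity direction exploits through $G-uv$ --- and then carry out the counting estimate showing that three or more parts of maximum size cannot all be shrunk to size $\le b-t$ by only $t$ operations, with the boundary case $t\ge b$ split off separately. The conditions $p\ge 2$ and ``$p=2\Rightarrow a=b$'' enter only through $\mu\ge 3$, which is precisely what makes the inequality $2t<\mu t$ hold.
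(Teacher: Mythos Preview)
Your proof is correct in both directions, and your necessity argument is essentially the same as the paper's (you work directly with $\delta$ while the paper routes through $\kappa$, but since $\kappa=\delta$ for complete multipartite graphs the operations and contradictions are identical).

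The sufficiency direction, however, takes a genuinely different route. The paper first proves an auxiliary lemma (\lemref{CMG}): any minor of a complete multipartite graph can be reached by a sequence of contractions, vertex deletions, and edge deletions in which \emph{no step increases the minimum degree}. This reduces the problem to checking that a \emph{single} contraction, a single vertex deletion, and a single edge deletion each drop $\delta$ to at most $k$, which is then done by a short case analysis on which parts are hit. Your argument bypasses this monotonicity lemma entirely: you pass to the complete multipartite ``hull'' $H'$ of an arbitrary proper minor, record $t=|V(G)|-|V(H')|$, and use a pigeonhole count---each of the $t$ operations removes at most two vertices from the original parts, so the $\mu\ge 3$ maximum-size parts lose at most $2t<\mu t$ vertices in total, forcing one of them to survive with size at least $b-t+1$. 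This gives $\delta(H')\le(n-t)-(b-t+1)=k$ in one stroke for all $t\ge 1$.

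What each approach buys: the paper's reduction to a single step makes the final verification entirely mechanical but requires the separate monotonicity lemma. Your counting argument is self-contained and has the pleasant feature of isolating \emph{exactly} where the hypotheses enter---the conditions $p\ge 2$ and ``$p=2\Rightarrow a=b$'' are used only to guarantee $\mu\ge 3$, which is precisely the threshold making $2t<\mu t$ work. One small point worth tightening in a write-up: the phrase ``each operation decreases at most two of $G$'s parts by one'' is being applied iteratively to graphs that are no longer $G$, so you should phrase it in terms of tracking which original vertices remain as singleton branch sets (``intact''); the bound ``at most $2t$ original vertices become non-intact'' then follows cleanly.
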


\thmref{CompleteMultipartite} is proved in \secref{CompleteMultipartite}. 
Moreover, we prove that the same characterisation holds for the minimal forbidden complete multipartite minors for the class of graphs for which every minor has connectivity at most $k$. And \thmref{CMG-Treewidth} is an analogous result for graphs of treewidth at most $k$ and pathwidth at most $k$.

%%%%%%%%%%%%%%%%%%%%%%%%%%%%%%%%%%%%%%%%%%%%%%%%%%%%%%%%%%%%%%%%%%%%%%%%%%%%%%%
\section{Basics and Small Values of $k$}\seclabel{Basics}
%%%%%%%%%%%%%%%%%%%%%%%%%%%%%%%%%%%%%%%%%%%%%%%%%%%%%%%%%%%%%%%%%%%%%%%%%%%%%%%

This section gives some basic results about \DD{k} and reviews what is known about \DD{k} for small values of $k$. We have the following characterisation of graphs in \DD{k}. 

\begin{lemma}
\lemlabel{BasicDegree}
$G\in\DD{k}$ if and only if
\begin{enumerate}
\item[{\rm (D1)}] $\delta(G)=k+1$,
\item[{\rm (D2)}] every proper contraction minor of $G$ has minimum degree at most $k$,
\item[{\rm (D3)}] $G$ is connected,  and
\item[{\rm (D4)}] no two vertices both with degree at least $k+2$  are adjacent in $G$.
\end{enumerate}
\end{lemma}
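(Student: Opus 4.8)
The plan is to prove both implications. I would first record a reformulation: because every minor of a proper minor of $G$ is again a proper minor of $G$, the condition ``every proper minor of $G$ lies in \D{k}'' is equivalent to ``every proper minor of $G$ has minimum degree at most $k$''. Hence $G\in\DD{k}$ if and only if $G$ has \emph{some} minor of minimum degree at least $k+1$, while \emph{every} proper minor of $G$ has minimum degree at most $k$. I would then verify (D1)--(D4) against this reformulation.

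For the forward direction, assume $G\in\DD{k}$ and fix a minor $M$ of $G$ with $\delta(M)\geq k+1$. Condition (D2) is immediate, since a proper contraction minor is in particular a proper minor, hence has minimum degree at most $k$. For (D3): if $G=G_1\sqcup G_2$ with both parts nonempty, then each branch set of $M$ lies inside a single component, so $M$ decomposes as $M_1\sqcup M_2$ with $M_i$ a minor of $G_i$; since $\delta(M)\geq k+1\geq 1$, $M$ has an edge, so some $M_i$ is a nonempty minor of a proper subgraph of $G$ with $\delta(M_i)\geq k+1$, contradicting the reformulation. For (D1): $M$ cannot be a proper minor (it has minimum degree at least $k+1$), so $M\cong G$ and $\delta(G)\geq k+1$; and if $\delta(G)\geq k+2$ then deleting any edge $uv$ gives a proper minor in which every degree is still at least $k+1$, a contradiction, so $\delta(G)=k+1$. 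For (D4): two adjacent vertices $u,v$ of degree at least $k+2$ would make $G-uv$ a proper minor of minimum degree at least $k+1$ by the same degree count, again a contradiction.

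For the backward direction, assume (D1)--(D4). Then $G$ is a minor of itself of minimum degree $k+1>k$, so $G\notin\D{k}$, and it remains to show every proper minor $G'$ of $G$ has minimum degree at most $k$. Suppose not, so $\delta(G')\geq k+1$, and fix branch sets $B_1,\dots,B_m\subseteq V(G)$ realising $G'$. I would split into two cases according to whether $|V(G')|=|V(G)|$. If $|V(G')|=|V(G)|$, the $B_i$ are singletons partitioning $V(G)$ and $G'$ is a proper spanning subgraph of $G$; choosing an edge $xy\in E(G)\setminus E(G')$, both $x$ and $y$ have degree at least $k+2$ in $G$ (their degrees in $G'$ are at least $k+1$ and at least one incident edge is missing from each), contradicting (D4). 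If $|V(G')|<|V(G)|$, I would use connectedness (D3) to repeatedly move a vertex of $V(G)\setminus\bigcup_i B_i$ that has a neighbour in some $B_i$ into that $B_i$, until the branch sets partition $V(G)$; contracting them yields a contraction minor $M$ with $\delta(M)\geq\delta(G')\geq k+1$ (absorbing vertices only adds edges), and $M$ is a \emph{proper} contraction minor because some branch set has grown to at least two vertices, contradicting (D2).

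The main obstacle is that minimum degree is not monotone under taking minors, so one cannot simply pass to a single elementary minor operation and invoke minor-closedness of \D{k}. The absorption step in the backward direction is exactly what circumvents this: it converts an arbitrary proper minor into a proper contraction minor without decreasing the minimum degree, so that (D2) becomes applicable. The one configuration in which no absorption is possible is that of a proper spanning subgraph, and recognising that this case must be split off and handled through (D4) is the key point of the argument.
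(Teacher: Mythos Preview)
Your proof is correct, and the backward direction follows a genuinely different route from the paper. The paper chooses a proper minor $H$ with $\delta(H)\geq k+1$ and the \emph{maximum number of edges}, then argues: by (D4) every edge of $G$ meets a degree-$(k+1)$ vertex, so no proper subgraph works and at least one contraction occurred; by edge-maximality no deletions occurred; hence $H$ is a proper contraction minor, contradicting (D2). Your approach instead splits on $|V(G')|$ and, when $|V(G')|<|V(G)|$, constructively \emph{enlarges} the branch sets (via the absorption step) to a partition of $V(G)$, producing a proper contraction minor dominating $G'$. Both arguments isolate the same two obstacles---the pure spanning-subgraph case (handled by (D4)) and the general case (reduced to (D2))---but the paper's extremal choice is slightly slicker, while your absorption argument is more explicit about how (D3) is used and makes transparent exactly why the minimum degree does not drop when passing to the contraction minor.
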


\begin{proof}
$(\Longrightarrow)$ Suppose that $G\in\DD{k}$. That is, $\delta(G)\geq k+1$ and every minor of $G$ has minimum degree at most $k$. In particular, every 
contraction minor of $G$ has minimum degree at most $k$, thus proving (D2). 
If $G$ is not connected then each component of $G$ is a proper minor with minimum degree $k+1$. This contradiction proves (D3). If adjacent vertices $v$ and $w$ both have degree at least $k+2$, then $G-vw$ is a proper minor of $G$ with minimum degree at least $k+1$. This contradiction proves (D4). In particular, some vertex has degree $k+1$. Thus $\delta(G)=k+1$ and (D1) holds.

$(\Longleftarrow)$ Suppose that conditions (D1)--(D4) hold. Suppose on the contrary that some proper minor of $G$ has minimum degree at least $k+1$. Let $H$ be such a minor with the maximum number of edges. Since $G$ is connected, $H$ can be obtained by edge contractions and edge deletions only. (Deleting a non-isolated vertex $v$ can be simulated by contracting one and deleting the other edges incident to $v$.)\ Condition (D4) implies that every edge has an endpoint with degree $k+1$, implying that every proper subgraph of $G$ has minimum degree at most $k$. Hence at least one edge of $G$ was contracted in the construction of $H$. Since $H$ was chosen with the maximum number of edges, no edges were deleted in the construction of $H$. That is, $H$ is a contraction minor. Condition (D2) implies that $H$ has minimum degree at most $k$. This contradiction proves that every proper minor of $G$ has minimum degree at most $k$.
Thus condition (D1) implies that $G\in\DD{k}$.
\end{proof}

%\begin{observation}\obslabel{BasicDegree} REPLACED  BY \lemref{BasicDegree} $G\in\DD{k}$ if and only if $\delta(G)=k+1$ and every proper minor $H$ of $G$ has $\delta(H)\leq k$.\end{observation}

%\begin{proof} $G\in\DD{k}$ if and only if $G\not\in\D{k}$ and every proper minor of $G$ is in \D{k}. That is, $G\in\DD{k}$ if and only if $G$ contains a minor with minimum degree at least $k+1$, and every proper minor of $G$ has minimum degree at most $k$. This is only possible if the only minor of $G$ with minimum degree at least $k+1$ is $G$ itself. This proves the `if' part. For the `only if' part, observe that if $G$ has minimum degree at least $k+2$, then deleting any one vertex of $G$ gives a proper minor of $G$ with minimum degree at least $k+1$, implying $G\not\in\DD{k}$. Thus $G$ has minimum degree exactly $k+1$. \end{proof}

Observe that \lemref{BasicDegree} immediately implies that for all $k\geq0$,
\begin{equation} 
\eqnlabel{Complete}
K_{k+2}\in\DD{k}\enspace.
\end{equation}

Now consider small values of $k$. Observe that \D{0} is the class of edgeless graphs, and $\DD{0}=\{K_2\}$. Similarly \D{1} is the class of forests, and $\DD{1}=\{K_3\}$. Graphs in \D{2} are often called \emph{series-parallel}. \DD{2} and \DD{3} are easily determined; see \figref{DD23}.

\Figure{DD23}{\includegraphics{k}}{Graphs in $\DD{2}$ and $\DD{3}$.}

\begin{proposition}
$\DD{2}=\{K_4\}$.
\end{proposition}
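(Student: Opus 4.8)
The plan is to apply the characterisation in \lemref{BasicDegree} with $k=2$. So I need to show that $K_4$ satisfies (D1)--(D4), and that no other graph does.

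First I would verify that $K_4\in\DD{2}$. For (D1), every vertex of $K_4$ has degree $3=k+1$. For (D4), there is no vertex of degree at least $k+2=4$, so the condition is vacuous. For (D3), $K_4$ is connected. For (D2), I would simply note that contracting any edge of $K_4$ yields $K_3$ (after replacing parallel edges), which has minimum degree $2\leq k$; since all edges of $K_4$ are equivalent, and any further contraction only decreases the minimum degree, every proper contraction minor has minimum degree at most $2$. Hence $K_4\in\DD{2}$; alternatively this is just the case $k=2$ of \eqnref{Complete}.

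Next I would show $K_4$ is the \emph{only} member of $\DD{2}$. Let $G\in\DD{2}$. By (D1), $\delta(G)=3$, so $G$ has at least $4$ vertices. By (D4), if any two adjacent vertices both had degree at least $4$ we would reach a contradiction, so the neighbourhood structure is constrained; in particular every edge has an endpoint of degree exactly $3$. The key step is to rule out $|V(G)|\geq5$. Suppose $G\ne K_4$. Then either $G$ has at least $5$ vertices, or $G$ has $4$ vertices but is not complete — the latter is impossible since a graph on $4$ vertices with $\delta\geq 3$ must be $K_4$. So assume $|V(G)|\geq 5$. I would pick a vertex $v$ of degree exactly $3$ (which exists by (D1)), with neighbours $x,y,z$. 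Since $G$ is connected and has a vertex outside $\{v,x,y,z\}$, at least one of $x,y,z$ has a neighbour other than $v$ and the other two, so at least one of them has degree at least $4$; say $\deg(x)\geq 4$. Now I would contract an edge incident to $v$, say $vy$, producing a minor $G'$ in which the merged vertex has degree at least $\deg(y)\geq 3$ (it absorbs $v$'s neighbours $x,z$), $x$ and $z$ retain degree at least $3$ (in fact $x$ still has degree $\geq 3$ since its only possibly-lost neighbour is $v$, now replaced by the merged vertex), and every other vertex keeps its degree. So $G'$ has minimum degree at least $3$, contradicting (D2) — provided $G'$ is a \emph{proper} contraction minor, which it is since a contraction strictly reduces the number of vertices.

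The main obstacle is making the degree bookkeeping in the contraction argument completely airtight: after contracting $vy$ I must check that the merged vertex, and each of $x$ and $z$, genuinely still has degree at least $3$, accounting for the possibility that $x$ and $z$ were already adjacent (so the merged vertex's neighbourhood $\{x,z\}\cup (N(y)\setminus\{v\})$ has size $\geq 3$ exactly when $y$ has a neighbour besides $v,x,z$ or when $|\{x,z,\dots\}|\geq 3$ — here $x,z$ plus the fact $\deg(y)\ge 3$ forces a third neighbour of $y$, which becomes a third neighbour of the merged vertex unless it coincides with $x$ or $z$, in which case $y$ had only neighbours among $v,x,z$ and then $\deg(y)=3$ with the merged vertex adjacent to exactly $x,z$ plus... ). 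To sidestep this casework cleanly, I would instead choose the edge to contract more carefully: contract an edge $vy$ where $y$ is a neighbour of $v$ chosen so that $y$ has a neighbour outside $\{v,x,y,z\}$ (such a $y$ exists by connectivity and $|V(G)|\ge 5$, possibly after relabelling), guaranteeing the merged vertex has at least the three neighbours $x$, $z$, and that outside neighbour. Then the remaining checks on $\deg x,\deg z$ are immediate, and (D2) is violated, completing the proof.
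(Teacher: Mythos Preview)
Your verification that $K_4\in\DD{2}$ is fine. The gap is in the uniqueness direction: your single-edge contraction does \emph{not} always produce a minor with $\delta\geq3$, and the ``immediate'' checks on $\deg x$ and $\deg z$ are precisely where it breaks. Concretely, take $G$ to be the triangular prism $K_3\square K_2$ (vertices $a,b,c,a',b',c'$, triangles $abc$ and $a'b'c'$, matching $aa',bb',cc'$). It is $3$-regular, connected, and satisfies (D1), (D3), (D4). Choose $v=a$ with neighbours $b,c,a'$; your rule says to contract $vy$ for some neighbour $y$ with a neighbour outside $\{v,b,c,a'\}$, say $y=b$ (outside neighbour $b'$). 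After contracting $ab$, the vertex $c$ is a common neighbour of $a$ and $b$, so it drops from degree $3$ to degree $2$. By symmetry \emph{every} single edge contraction in the prism creates a degree-$2$ vertex, so no choice of $y$ rescues the argument. (The prism does violate (D2) --- contracting $ab$ and then $cc'$ yields $K_4$ --- but two contractions are required.)

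The underlying issue is that you are implicitly trying to prove, by a one-step contraction, that every graph with $\delta\geq3$ on at least five vertices has a proper minor with $\delta\geq3$; this is Dirac's theorem in disguise and is not a one-line fact. The paper simply invokes Dirac's theorem that every graph with $\delta\geq3$ contains a $K_4$-minor: if $G\in\DD{2}$ then $\delta(G)=3$, so $G$ has a $K_4$-minor, and minimality forces $G\cong K_4$. To salvage your direct approach you would need either to allow multi-edge contractions and argue inductively, or to cite Dirac's result.
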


\begin{proof}
By \eqnref{Complete}, $K_4\in\DD{2}$. Consider $G\in\DD{2}$. By \lemref{BasicDegree}, $G$ has minimum degree $3$. \citet{Dirac52} proved that every graph with minimum degree at least $3$ contains a $K_4$-minor; also see \citep{Tutte-NAW61, Hadwiger43, Zeidl58, Woodall-JGT92}.Thus $G$ contains a $K_4$-minor. If $G\not\cong K_4$, then the $K_4$-minor in $G$ is not proper, implying $G\not\in\DD{2}$ by \lemref{BasicDegree}. Hence $G\cong K_4$.
\end{proof}

\begin{proposition}
\proplabel{DDthree} 
$\DD{3}=\{K_5,K_{2,2,2}\}$.
\end{proposition}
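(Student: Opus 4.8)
The ``only if'' direction is a short verification. We have $K_5\in\DD{3}$ by \eqnref{Complete}. For $K_{2,2,2}$, check conditions (D1)--(D4) of \lemref{BasicDegree}: it is $4$-regular, so $\delta(K_{2,2,2})=4$ and (D1) holds, while (D4) holds vacuously; it is connected, giving (D3); and a proper contraction minor of $K_{2,2,2}$ has at most five vertices, so by edge-transitivity contracting one edge gives a fixed $5$-vertex graph, which one checks has a vertex of degree $3$, while contracting two or more edges leaves at most four vertices --- so (D2) holds. (Alternatively, $K_{2,2,2}$ is $4$-regular on $6<\tfrac{4}{3}(3+2)$ vertices, so $K_{2,2,2}\in\DD{3}$ by \thmref{SmallRegular}.)

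For the ``if'' direction, everything reduces to the claim that \emph{every graph $G$ with $\delta(G)\geq4$ has a minor isomorphic to $K_5$ or to $K_{2,2,2}$}. Indeed, if $G\in\DD{3}$ then $\delta(G)\geq4$ by \lemref{BasicDegree}, so $G$ has a minor $H\in\{K_5,K_{2,2,2}\}$; since $\delta(H)\geq4$, if $G\not\cong H$ then $H$ is a proper minor of $G$ of minimum degree at least $4$, contradicting $G\in\D{3}$. Hence $G\cong K_5$ or $G\cong K_{2,2,2}$.

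To prove the claim I would take $G$ to be a minor-minimal counterexample. Minor-minimality forces every proper minor of $G$ to have minimum degree at most $3$, so --- as in the proof of \lemref{BasicDegree} --- $G$ satisfies (D1)--(D4); in particular $\delta(G)=4$, and $G$ is $4$-connected (I would first establish that every graph in $\DD{3}$ is $4$-connected, as stated in the introduction). Fix a vertex $v$ with $\deg(v)=4$ and write $N(v)=\{a,b,c,d\}$. For each $w\in N(v)$ the contraction minor $G/vw$ is proper, so by (D2) it has a vertex of degree at most $3$; but in $G/vw$ only the contracted vertex and the common neighbours of $v$ and $w$ --- all lying in $N(v)$ --- can have degree below $4$. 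Feeding this back for each $w$ (and, for some cases, also contracting two disjoint edges at once) restricts $G[N(v)]$ to a short list: it has no isolated vertex, so it is one of $2K_2$, $P_4$, $K_{1,3}$, $C_4$, the paw, the diamond, or $K_4$. If $G[N(v)]=K_4$ then $\{v\}\cup N(v)$ induces a $K_5$ in which every vertex has degree $4$, forcing $G\cong K_5$. If $G[N(v)]=C_4$, say with non-edges $ab$ and $cd$, then by $4$-connectivity there is a connected subgraph $S$ of $G-(\{v\}\cup N(v))$ adjacent to each of $a,b,c,d$ (take any vertex outside $\{v\}\cup N(v)$ and the fan of four disjoint paths to $N(v)$, which avoids $v$); then $\{v\},S,\{a\},\{b\},\{c\},\{d\}$ form a $K_{2,2,2}$-minor, which is proper unless $G\cong K_{2,2,2}$, so $G\cong K_{2,2,2}$. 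Each of the remaining five possibilities for $G[N(v)]$ must be excluded by exhibiting either a proper minor of minimum degree at least $4$ (for instance a suitable two-edge contraction) or a proper $K_5$- or $K_{2,2,2}$-minor, once more using $4$-connectivity to supply a branch set.

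The step I expect to be the main obstacle is exactly this last one: ruling out the possibilities $2K_2$, $P_4$, $K_{1,3}$, the paw, and the diamond for $G[N(v)]$, since in some of these the right minor is not obvious and must be constructed carefully (the triangle-containing cases yield an induced $K_4$ through $v$ and hence a $K_5$-minor, but the triangle-free cases $2K_2$ and $P_4$ are fiddlier). A cleaner-to-state --- though not cleaner-to-prove --- alternative for the whole ``if'' direction replaces the case analysis by a single hard step via Wagner's theorem: since a minor-minimal counterexample $G$ is $4$-connected with no $K_5$-minor, it admits no nontrivial clique-sum decomposition over a clique of size at most $3$, so Wagner's structure theorem forces $G$ to be planar or the Wagner graph $V_8$; as $\delta(V_8)=3$, $G$ is planar, and we are reduced to the (equivalent) fact that every $4$-connected planar graph has a $K_{2,2,2}$-minor, which one proves by induction from the octahedron using a planarity- and $4$-connectivity-preserving reduction (or by appealing to a generation theorem for $4$-connected planar graphs).
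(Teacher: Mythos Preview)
Your reduction is right and matches the paper: both directions come down to the claim that every graph with $\delta\geq4$ has a $K_5$- or $K_{2,2,2}$-minor. Your verification that $K_{2,2,2}\in\DD{3}$ is fine (the paper instead uses planarity plus Euler's formula), and your alternative appeal to \thmref{SmallRegular} is also legitimate.

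The substantive gap is the step ``$G$ is $4$-connected (I would first establish that every graph in $\DD{3}$ is $4$-connected, as stated in the introduction)''. That sentence in the introduction is a \emph{consequence} of knowing $\DD{3}=\{K_5,K_{2,2,2}\}$, so citing it here is circular, and you give no indication of how you would prove it independently. Both of your routes---the case analysis on $G[N(v)]$ and the Wagner alternative---rest on this unproved $4$-connectivity. This is exactly the content the paper supplies in \appref{DegreeFour}: it proves (via an inductive lemma on graphs whose low-degree vertices form a clique) that every graph with $\delta\geq4$ has a $4$-connected minor, which for a graph in $\DD{3}$ forces the graph itself to be $4$-connected. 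You should expect that establishing $4$-connectivity is real work, comparable in difficulty to the rest of the argument.

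Once $4$-connectivity is in hand, the paper does not do your $G[N(v)]$ case analysis at all; it simply invokes the theorem of Halin and Jung that every $4$-connected graph has a $K_5$- or $K_{2,2,2}$-minor. Your Wagner alternative is essentially how the paper proves Halin--Jung in the appendix: $4$-connected with no $K_5$-minor forces planarity (the $V_8$ case is excluded, as you note). For the planar step, however, the paper's argument is cleaner than the induction you sketch: fix a vertex $v$, look at the facial cycle $C$ of $G-v$ surrounding $v$; since $G-v$ is $3$-connected, $C$ is induced and nonseparating (Whitney/Tutte), so contracting $(G-v)-C$ to a point and $C$ down to a $4$-cycle through four neighbours of $v$ yields $K_{2,2,2}$ directly. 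This avoids any generation theorem for $4$-connected planar graphs.
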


\begin{proof}
By \eqnref{Complete}, $K_5\in\DD{3}$. Since $K_{2,2,2}$ is planar, every proper minor of $K_{2,2,2}$ is a planar graph on at most five vertices, which by Euler's Formula, has a vertex of degree at most $3$. Thus $K_{2,2,2}\in\DD{3}$. 

Consider $G\in\DD{3}$. By \lemref{BasicDegree}, $G$ has minimum degree $4$. 
In \appref{DegreeFour} we prove that every graph with minimum degree at least $4$ contains a $4$-connected minor\footnote{This result was attributed by \citet{Maharry-JGT99} to \citet{HJ-MA63}. While the authors acknowledge their less than perfect understanding of German, Halin and Jung actually proved that every $4$-connected graph contains $K_5$ or $K_{2,2,2}$ as a minor. This is confirmed by Tutte's review of the Halin and Jung paper in MathSciNet.}. \citet{HJ-MA63} proved that every $4$-connected graph contains $K_5$ or $K_{2,2,2}$ as a minor. Thus $G$ contains $K_5$ or $K_{2,2,2}$ as a minor. Suppose on the contrary that $G$ is isomorphic to neither $K_5$ nor $K_{2,2,2}$. Then $G$ contains $K_5$ or $K_{2,2,2}$ as a proper minor. Thus $G$ contains a proper minor with minimum degree 4, implying $G\not\in\DD{4}$ by \lemref{BasicDegree}. Hence $G$ is isomorphic to $K_5$ or $K_{2,2,2}$.
\end{proof}

Determining \DD{4} is an open problem. But we do know nine graphs in \DD{4}, as illustrated in \figref{MinDeg5Graphs}. One of these graphs is the \emph{icosahedron} $I$, which is the unique $5$-regular planar triangulation (on twelve vertices). \citet{Mader68} proved that every planar graph with minimum degree $5$ contains $I$ as a minor. More generally, \citet{Mader68} proved that every graph with minimum degree at least $5$ contains a minor in $\{K_6,I,C_5*\overline{K_3},K_{2,2,2,1}-e\}$, where $e$ is an edge incident to the degree-$6$ vertex in $K_{2,2,2,1}$. However, since $K_{2,2,2,1}-e$ has a degree-$4$ vertex, it is not in \DD{4}. \citet{Fijavz-PhD} proved that every graph on at most $9$ vertices with minimum degree at least $5$ contracts to $K_6$, $K_{2,2,2,1}$ or $C_5*\overline{K_3}$. The graphs $G_1$ and $G_2$ are  discussed further in \secref{GeneralSetting}. The graphs $D_1$ and $D_3$ are due to \citet{Fijavz-PhD}, while $D_2$ is due to \citet{Mader68}. Note that  $D_1$, $D_2$ and $D_3$ are not $5$-connected. In fact, $D_3$ has a cut-vertex. It is an example of a  more general construction given in \secref{BlockStructure}. In the language used there, $D_3$ is obtained from two copies of the single-horned graph $G_{5,4}$ by identifying the two horns. 

\Figure{MinDeg5Graphs}{\includegraphics{MinDeg5Graphs}}{The known graphs in \DD{4}; vertices with degree more than $5$ are highlighted.}

\begin{proposition} 
$\{K_6,I,C_5*\overline{K_3},K_{1,2,2,2},G_1,G_2,D_1,D_2,D_3\}\subseteq\DD{4}$.
\end{proposition}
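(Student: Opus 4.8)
The plan is to verify each of the nine graphs against the four conditions of \lemref{BasicDegree}. For each graph $G$ in the list, I would check: (D1) $\delta(G)=5$; (D3) $G$ is connected; (D4) no two vertices of degree $\geq 6$ are adjacent; and the substantive condition (D2) that every proper contraction minor has a vertex of degree at most $4$. Conditions (D1), (D3), (D4) are immediate from inspection of \figref{MinDeg5Graphs} (the highlighted high-degree vertices form an independent set in each case), so all the work is in (D2). For $K_6=K_{k+2}$ with $k=4$, membership is already recorded in \eqnref{Complete}. For the four graphs $I$, $C_5*\overline{K_3}$, $K_{1,2,2,2}$, and (after checking degrees) any of $G_1,G_2$ that happen to be planar or nearly so, one can often argue via a global bound: e.g.\ $I$ is planar, so every proper minor is a planar graph on at most $11$ vertices, and every planar graph on at most $11$ vertices has a vertex of degree at most $4$ unless it is a triangulation on exactly $12$ vertices (by Euler's formula, a planar triangulation with $\delta\geq 5$ needs at least $12$ vertices); since $I$ is the \emph{unique} $5$-regular planar triangulation on $12$ vertices, no proper minor can have $\delta\geq 5$. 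The same Euler's-formula argument handles $K_{1,2,2,2}=K_{2,2,2,1}$ if it is planar (it is, being a maximal planar graph on $7$ vertices), but one must be careful: a proper minor on fewer than $12$ vertices still cannot be a $\delta\geq5$ triangulation, so $K_{1,2,2,2}\in\D{4}$; and $\delta(K_{1,2,2,2})=5$ with the unique degree-$6$ vertex not adjacent to itself, so (D4) holds trivially.

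For the non-planar or less symmetric graphs $C_5*\overline{K_3}$, $G_1$, $G_2$, $D_1$, $D_2$, $D_3$, I would fall back on the structural characterization: by \lemref{BasicDegree}(D2) it suffices to show that every \emph{contraction} minor obtained by contracting a nonempty edge set is not in $\D{4}^{c}$, i.e.\ has a vertex of degree $\le 4$. Because each graph has only a bounded number of edges, this is a finite check; the efficient way to organize it is to contract a single edge $uv$ and observe that in each case the resulting identified vertex, together with its remaining neighbours, creates or preserves a low-degree vertex — for instance, if $uv$ lies in many triangles, contraction lowers the degrees of the common neighbours, and if $u$ has degree exactly $5$ then its four surviving neighbours are pushed toward degree $4$ after parallel-edge suppression. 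One then notes that contracting more edges only helps (degrees do not increase under further contraction of edges incident to an already-low-degree vertex, and any fresh low-degree vertex created along the way suffices). For $D_1,D_2,D_3$, which are the graphs built by the horn/amalgamation construction of \secref{BlockStructure}, the cleanest route is to invoke that construction directly: $D_3$ is two copies of $G_{5,4}$ glued at their horns, and the general lemma there (which I am allowed to assume is stated later — though to keep the present proof self-contained I would instead give the one-edge-contraction check) shows the amalgam stays in $\D{4}$ while the two horn-vertices of degree $5$ prevent any proper minor from reaching $\delta\geq 5$.

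The main obstacle is the (D2) verification for $G_1$, $G_2$, and the $D_i$, since these lack the clean planarity shortcut available for $I$ and $K_{1,2,2,2}$; in a fully written proof this becomes a somewhat tedious case analysis over edge contractions, best compressed by symmetry (each graph has a nontrivial automorphism group, cutting the number of edge-orbits to a handful) and by the monotonicity observation that it is enough to rule out contracting each single edge and each single pair of edges, since any larger contraction minor has one of these as a further minor and minimum degree is non-increasing under taking minors. I expect the whole argument to be routine but lengthy; the paper likely relegates the explicit checks to a figure-assisted case analysis or cites \citet{Fijavz-PhD} for the computer-verified instances $D_1$ and $D_3$.
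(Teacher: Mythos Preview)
Your overall plan---check (D1), (D3), (D4) by inspection and do the real work on (D2)---matches the paper, and your arguments for $K_6$ and $I$ are essentially the paper's. However, two of your shortcuts are wrong.

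First, $K_{1,2,2,2}$ is \emph{not} planar: it has $7$ vertices and $18$ edges, while a planar graph on $7$ vertices has at most $3\cdot 7-6=15$ edges. So the Euler-formula argument you propose for it does not apply. The paper instead invokes the forward reference to \thmref{CMG-Degree} (the characterisation of complete multipartite graphs in $\DD{k}$) with $a=1$, $b=2$, $p=3$.

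Second, and more seriously, your ``monotonicity observation'' is false. You claim it suffices to rule out contractions of one or two edges because ``any larger contraction minor has one of these as a further minor and minimum degree is non-increasing under taking minors.'' Minimum degree is \emph{not} minor-monotone: attach a pendant vertex to $K_5$ (minimum degree $1$) and contract the pendant edge to get $K_5$ (minimum degree $4$). So knowing $\delta(G/S')\leq 4$ for $|S'|\leq 2$ tells you nothing about $\delta(G/S)$ for larger $S$; the low-degree vertex you found after two contractions may be absorbed by further contractions. Concretely, $D_3$ has $13$ vertices, so a putative $\delta\geq 5$ contraction minor could arise from contracting up to $7$ edges---and the paper's manual proof for $D_3$ does analyse contractions of up to $6$ edges in a block, using the cut-vertex structure rather than any single-edge reduction. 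For $C_5*\overline{K_3}$ the paper gives a direct case analysis bounding $|S|\leq 2$ by the vertex count $|V(H)|\geq 6$, not by monotonicity. For $G_1,G_2,D_1,D_2$ the paper simply reports computer verification and gives no manual argument.
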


\begin{proof}
This result was verified by computer. 
(The code is available from the authors upon request.)\ 
We now give manual proofs for some of these graphs.

$K_6\in\DD{4}$ by \eqnref{Complete}.  

$I$ is not in \D{4} since it is $5$-regular. Every proper minor of $I$ is planar with at most eleven vertices. By Euler's Formula, every such graph has minimum degree at most $4$, and is thus in \D{4}. Hence $I\in\DD{4}$.

We now prove that $C_5*\overline{K_3}\in\D{4}$. Since $C_5*\overline{K_3}$ is $5$-regular, conditions (D1), (D3) and (D4) in \lemref{BasicDegree}. 
Suppose that $C_5*\overline{K_3}$ contains a proper contraction minor $H$ with $\delta(H)\geq 5$. Thus $|V(H)|\geq6$, and $H$ was obtained by at most two edge contractions.
Since every edge of $C_5*\overline{K_3}$ is in a triangle with a degree-5 vertex, $H$ was obtained by exactly two edge contractions.
Since each edge in the $C_5$ part of $C_5*\overline{K_3}$ is in three triangles, no edge in the $C_5$ part was contracted. 
Thus one contracted edge was $vw$ where $v\in C_5$ and $w\in\overline{K_3}$. Observe that $vw$ is in two triangles $vwx$ and $vwy$, where $x$ and $y$ are the neighbours of $v$ in $C_5$. Since both $x$ and $y$ have degree $4$ in $G/vw$, some edge incident to $x$ and some edge incident to $y$ is contracted in $H$. 
This is impossible since $x$ and $y$ are not adjacent, and only one contraction besides $vw$ is allowed. This contradiction proves that every proper contraction minor of $G$ has minimum degree at most $4$. Thus condition (D2) holds for $C_5*\overline{K_3}$, and $C_5*\overline{K_3}\in\DD{4}$.

That $K_{1,2,2,2}$ is in \DD{4} follows from \thmref{CMG-Degree} with $a=1$ and $b=2$ and $p=3$.

We now prove that $D_3\in\D{4}$. Observe that conditions (D1), (D3) and (D4) in \lemref{BasicDegree} hold for $D_3$. Suppose that $D_3$ contains a proper contraction minor $H$ with $\delta(H)\geq 5$. Thus $H=D_3/S$ for some $S\subseteq E(G)$ such that $|V(H)|=13-|S|$. Let $v$ be the cut-vertex in $D_3$. Let $G_1$ and $G_2$ be the subgraphs of $D_3$ such that $D_3=G_1\cup G_2$ where $V(G_1)\cap V(G_2)=\{v\}$. Let $S_i:=S\cap E(G_i)$. We have $|S_i|\leq|V(G_i)|-1=6$. Every edge of $D_3$ is in a triangle with a vertex distinct from $v$. Thus, if $|S_i|=1$ then some vertex in $H$ has degree at most $4$, which is a contradiction. If $2\leq |S_i|\leq 5$ then $G_i/S$ has at least two and at most five vertices, and every vertex in $G_i/S$ (except possibly $v$) has degree at most $4$, which is a contradiction. Thus $|S_i|\in\{0,6\}$. Now $|S_1|+|S_2|=|S|\leq 7$, as otherwise $H$ has at most five vertices. Thus $|S_1|=0$ and $|S_2|=6$ without loss of generality. Hence $H\cong G_1$, in which $v$ has degree $4$, which is a contradiction. Thus condition (D2) holds for $D_3$. Hence $D_3\in\DD{4}$.
\end{proof}

%%%%%%%%%%%%%%%%%%%%%%%%%%%%%%%%%%%%%%%%%%%%%%%%%%%%%%%%%%%%%%%%%%%%%%%%%%%%%%%
\section{A General Setting}\seclabel{GeneralSetting}
%%%%%%%%%%%%%%%%%%%%%%%%%%%%%%%%%%%%%%%%%%%%%%%%%%%%%%%%%%%%%%%%%%%%%%%%%%%%%%%

The following general approach for studying minor-closed class was introduced by \citet{Fijavz-PhD}. A \emph{graph parameter} is a function $f$ that assigns a non-negative integer $f(G)$ to every graph $G$, such that for every integer $k$ there is some graph $G$ for which $f(G)\geq k$. Examples of graph parameters include  minimum degree $\delta$,  maximum degree $\Delta$, (vertex-)connectivity $\kappa$,  edge-connectivity $\lambda$, chromatic number $\chi$, clique number $\omega$, independence number $\alpha$, treewidth $\textsf{tw}$, and pathwidth $\textsf{pw}$; see \citep{Diestel00} for definitions.

For a graph parameter $f$ and a graph $G$, let $\down{f}(G)$ be the maximum of $f(H)$ taken over all minors $H$ of $G$. Then \down{f} also is a graph parameter\footnote{Let $\up{f}(G)$ be the minimum of $f(H)$ where $G$ is a minor of $H$. Then the class of graphs $G$ with $\up{f}(G)\leq k$ is minor-closed, and we can ask the same questions for $\up{f}$ as for $\down{f}$. For example, the minor crossing number \citep{BFM-SJDM06, BCSV-ENDM, BFW} fits into this framework.}. For example, $\down{\omega}(G)$ is the order of the largest clique minor in $G$, often called the \emph{Hadwiger number} of $G$. 
Let $$\X_{f,k}:=\{G:\down{f}(G)\leq k\}\enspace.$$ 
That is, $\X_{f,k}$ is the class of graphs $G$ such that $f(H)\leq k$ for every minor $H$ of $G$. Then $\X_{f,k}$ is minor-closed, and the set $\XX_{f,k}$ of minimal forbidden minors is finite.

We have the following characterisation of graphs in $\XX_{f,k}$, analogous to \lemref{BasicDegree}.

\begin{lemma}
\lemlabel{Basic}
$G\in\XX_{f,k}$ if and only if $f(G)\geq k+1$ and every proper minor $H$ of $G$ has $f(H)\leq k$.
\end{lemma}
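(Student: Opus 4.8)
The plan is to unwind the definitions of $\X_{f,k}$ and of a minimal forbidden minor; in contrast with \lemref{BasicDegree}, there is no structural condition special to $f$ to extract, since $f$ is an arbitrary graph parameter. Throughout, recall that $\X_{f,k}=\{G:\down{f}(G)\leq k\}$, and that $G\in\XX_{f,k}$ means precisely that $G\notin\X_{f,k}$ while every proper minor of $G$ belongs to $\X_{f,k}$.

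For the forward implication I would argue as follows. If $G\in\XX_{f,k}$ then every proper minor $H$ of $G$ has $\down{f}(H)\le k$, and in particular $f(H)\le k$ since $H$ is a minor of itself. To see $f(G)\ge k+1$, note that $G\notin\X_{f,k}$ gives $\down{f}(G)\ge k+1$, so some minor $H$ of $G$ has $f(H)\ge k+1$; this $H$ cannot be a proper minor (those satisfy $f\le k$), so $H\cong G$. For the converse, if $f(G)\ge k+1$ then $\down{f}(G)\ge k+1$, so $G\notin\X_{f,k}$; and if $H$ is any proper minor of $G$ then I would check that every minor of $H$ is itself a proper minor of $G$, whence $\down{f}(H)\le k$ and $H\in\X_{f,k}$. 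Together these give $G\in\XX_{f,k}$.

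The one step that is not completely mechanical is the claim that a minor of a proper minor of $G$ is again a \emph{proper} minor of $G$: transitivity of the minor relation makes it a minor, and properness follows because taking a proper minor strictly decreases the quantity $|V|+|E|$, so no minor of $H$ can be isomorphic to $G$. (The hypothesis that $f$ is a graph parameter is used only to guarantee that $\X_{f,k}$ is a proper class, so that $\XX_{f,k}$ is well defined: there is a graph with $f$-value at least $k+1$, and it is not in $\X_{f,k}$.) I do not expect any genuine obstacle; the content of the lemma is really just the observation, already implicit in the discussion of \DD{k}, that minor-closedness of $\X_{f,k}$ lets one replace ``every minor of $G$'' by ``$G$ together with its proper minors''.
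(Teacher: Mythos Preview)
Your proof is correct and follows the same approach as the paper: unwind the definition of $\XX_{f,k}$ and observe that the witness minor $H$ with $f(H)\ge k+1$ must be $G$ itself. You are simply more explicit than the paper about why ``every proper minor of $G$ lies in $\X_{f,k}$'' is equivalent to ``every proper minor $H$ has $f(H)\le k$'', supplying the $|V|+|E|$ argument that minors of proper minors remain proper; the paper leaves this implicit.
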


\begin{proof}
By definition, $G\in\XX_{f,k}$ if and only if $G\not\in\X_{f,k}$ but every proper minor of $G$ is in $\X_{f,k}$. That is, there exists a minor $H$ of $G$ with $f(H)\geq k+1$, but every proper minor $H$ of $G$ has $f(H)\leq k$. Thus the only minor $H$ of $G$ with $f(H)\geq k+1$ is $G$ itself. 
\end{proof}

\begin{lemma}
\lemlabel{alphabeta}
Let $\alpha$ and $\beta$ be graph parameters such that $\alpha(G)\leq\beta(G)$ for every graph $G$. Then $\X_{\beta,k}\subseteq\X_{\alpha,k}$ and $\{G:G\in\XX_{\beta,k},\alpha(G)\geq k+1\}\subseteq\XX_{\alpha,k}$.
\end{lemma}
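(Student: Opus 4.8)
The plan is to unwind the two definitions and push through the pointwise hypothesis $\alpha\leq\beta$, with \lemref{Basic} doing the bookkeeping in the second half. Nothing deep is involved; the whole argument is a two-step chase through the definitions of $\X_{f,k}$ and $\XX_{f,k}$.

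For the first inclusion $\X_{\beta,k}\subseteq\X_{\alpha,k}$, I would begin by recording that a pointwise inequality between parameters is inherited by the induced parameters $\down{\alpha},\down{\beta}$: fixing a graph $G$ and ranging $H$ over the minors of $G$, monotonicity of the maximum gives $\down{\alpha}(G)=\max_H\alpha(H)\leq\max_H\beta(H)=\down{\beta}(G)$. Consequently, if $G\in\X_{\beta,k}$, that is $\down{\beta}(G)\leq k$, then $\down{\alpha}(G)\leq k$, so $G\in\X_{\alpha,k}$.

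For the second inclusion, take $G\in\XX_{\beta,k}$ with $\alpha(G)\geq k+1$. Apply \lemref{Basic} to $\beta$ to learn that every proper minor $H$ of $G$ satisfies $\beta(H)\leq k$, hence $\alpha(H)\leq\beta(H)\leq k$. Together with the hypothesis $\alpha(G)\geq k+1$, the characterisation of \lemref{Basic} applied now to $\alpha$ gives $G\in\XX_{\alpha,k}$, as required.

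I do not anticipate any genuine obstacle. The only points deserving a moment's attention are that the step $\alpha\leq\beta\Rightarrow\down{\alpha}\leq\down{\beta}$ rests on monotonicity of the maximum rather than on any structural feature of minors, and that \lemref{Basic} must be invoked twice — once for $\beta$ to extract the behaviour of proper minors of $G$, once for $\alpha$ to reassemble the conclusion. It is also worth flagging why the side condition $\alpha(G)\geq k+1$ cannot be dropped: without it $G$ could well satisfy $\down{\alpha}(G)\leq k$, i.e.\ lie inside $\X_{\alpha,k}$, in which case it is not even a forbidden minor for $\alpha$.
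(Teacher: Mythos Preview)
Your proof is correct and follows essentially the same approach as the paper: both argue the first inclusion by pushing $\alpha(H)\leq\beta(H)\leq k$ through all minors, and both prove the second inclusion by invoking \lemref{Basic} twice, once for $\beta$ to bound proper minors and once for $\alpha$ to reassemble. Your phrasing via $\down{\alpha}\leq\down{\beta}$ and your closing remark on why the side condition $\alpha(G)\geq k+1$ is necessary are nice touches, but the underlying argument is identical.
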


\begin{proof}
For the first claim, let $G$ be a graph in $\X_{\beta,k}$. Then $\beta(H)\leq k$ for every minor $H$ of $G$. By assumption, $\alpha(H)\leq\beta(H)\leq k$. Hence $G\in\X_{\alpha,k}$, implying $\X_{\beta,k}\subseteq\X_{\alpha,k}$.

For the second claim, suppose that $G\in\XX_{\beta,k}$ and $\alpha(G)\geq k+1$. By \lemref{Basic} applied to $\beta$, $\beta(G)\geq k+1$ and every proper minor $H$ of $G$ has $\beta(H)\leq k$. By assumption, $\alpha(H)\leq\beta(H)\leq k$. Since $\alpha(G)\geq k+1$, \lemref{Basic} applied to $\alpha$ implies that $G\in \XX_{\alpha,k}$.
\end{proof}

Recall that $\delta$ and $\kappa$ are the graph parameters minimum degree and connectivity. Observe that $\D{k}=\X_{\delta,k}$. Let $$\C{k}:=\X_{\kappa,k}$$ be the class of graphs for which every minor has connectivity at most $k$. For $k\leq 3$, we have $\C{k}=\D{k}$ and $\CC{k}=\DD{k}$. That is, $\CC{1}=\{K_3\}$, $\CC{2}=\{K_4\}$, and $\CC{3}=\{K_5,K_{2,2,2}\}$. Determining \CC{4} is an open problem; \citet{Fijavz-PhD} conjectured that $\CC{4}=\{K_6,I,C_5*\overline{K_3},K_{1,2,2,2},G_1,G_2\}$.

\citet{Dirac-PLMS63} proved that every $5$-connected planar graph contains the icosahedron as a minor (which, as mentioned earlier, was generalised by \citet{Mader68} for planar graphs of minimum degree $5$). Thus the icosahedron is the only planar graph in \CC{4}. \citet{Fijavz-5} determined the projective-planar graphs in \CC{4} to be $\{K_6, I, G_1,G_2\}$. \citet{Fijavz-JCTB} determined the toroidal graphs in \CC{5} to be $\{K_7, K_{2,2,2,2}, K_{3,3,3}, K_9-C_9\}$. See \citep{Fijavz-EuJC04, FM-Comb03} for related results. Also relevant is the large body of literature on contractibility; see the surveys \citep{Kriesell-GC02, Mader-DM05}.

%Kriesell-JGT06, Kriesell-CPC05, Kriesell-JCTB01a, Kriesell-JCTB01, Kriesell-JCTB00, Egawa-GC91, JX-JCTB06, JX-GC05, AKK-GC05, Ando-DM05, AKK-DM03, AK-DM03, Kawarabayashi-JCTB02

Let $$\T{k}:=\{G:\tw{G}\leq k\}\quad\text{ and }\W{k}:=\{G:\tw{G}\leq k\}$$ respectively be the classes of graphs with treewidth and pathwidth at most $k$. Since treewidth and pathwidth are minor-closed, $\T{k}=\X_{\textsf{tw},k}$ and $\W{k}=\X_{\textsf{pw},k}$. We have $$\kappa(G)\leq\delta(G)\leq\tw{G}\leq\pw{G}$$ for every graph $G$; see \citep{Bodlaender-TCS98,Diestel00}. Thus \lemref{alphabeta} implies that 
\begin{equation*}
\W{k}\subseteq\T{k}\subseteq\D{k}\subseteq\C{k},
\end{equation*}
and
\begin{align}
\eqnlabel{DDinCC}&\{G:G\in\DD{k},\kappa(G)\geq k+1\}\subseteq\CC{k}\\
\eqnlabel{TTinDD}&\{G:G\in\TT{k},\delta(G)\geq k+1\}\subseteq\DD{k}\\
\eqnlabel{WWinTT}&\{G:G\in\WW{k},\tw{G}\geq k+1\}\subseteq\TT{k}.
\end{align}
%In fact, since every graph in \DD{k} is at most $(k+1)$-connected, 
%$$\{G:G\in\DD{k},\kappa(G)=k+1\}\subseteq\CC{k}\enspace.$$
Thus the $(k+1)$-connected graphs that we show are in \DD{k} are also in \CC{k}. In particular, \thmref{SmallRegular} implies: 

\begin{theorem}
Every $(k+1)$-connected $(k+1)$-regular graph with less than $\frac{4}{3}(k+2)$ vertices is in \CC{k}.
\end{theorem}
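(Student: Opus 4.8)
The plan is to derive this statement as an immediate consequence of \thmref{SmallRegular} together with the containment \eqnref{DDinCC}. First I would take an arbitrary graph $G$ that is $(k+1)$-connected, $(k+1)$-regular, and has fewer than $\frac{4}{3}(k+2)$ vertices. Applying the first part of \thmref{SmallRegular} gives $G\in\DD{k}$ directly, since $G$ meets exactly the hypotheses of that theorem.

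Next I would check the connectivity side condition needed to invoke \eqnref{DDinCC}. Since $G$ is $(k+1)$-regular we have $\delta(G)=k+1$, and since connectivity never exceeds minimum degree, $\kappa(G)\leq k+1$; combined with the assumed $(k+1)$-connectivity this yields $\kappa(G)=k+1\geq k+1$. Hence $G$ lies in the set $\{G:G\in\DD{k},\,\kappa(G)\geq k+1\}$, which by \eqnref{DDinCC} is contained in \CC{k}. Therefore $G\in\CC{k}$, as required.

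There is essentially no obstacle here: the theorem is a packaging of \thmref{SmallRegular} (the hard input, whose proof lives in \secref{SmallRegularGraphs}) with the elementary observation, already recorded in \lemref{alphabeta} and specialised in \eqnref{DDinCC}, that a minimal forbidden minor for bounded minor-closed minimum degree which additionally has high connectivity is a minimal forbidden minor for bounded minor-closed connectivity. The only thing worth stating explicitly in the write-up is the short chain $\kappa(G)\leq\delta(G)=k+1\leq\kappa(G)$ that certifies the side condition $\kappa(G)\geq k+1$; everything else is a one-line citation.
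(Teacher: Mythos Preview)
Your proposal is correct and matches the paper's approach exactly: the paper states this theorem without a written proof, merely noting that \thmref{SmallRegular} together with \eqnref{DDinCC} gives the result. Your explicit verification that $\kappa(G)\geq k+1$ is a harmless (and arguably helpful) elaboration, but otherwise there is nothing to add.
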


The relationship between \CC{k} and \DD{k} is an interesting open problem.

\begin{open}
Is $\CC{k}\subseteq\DD{k}$ for all $k$? Is $\CC{k}=\{G:G\in\DD{k},\kappa(G)=k+1\}$ for all $k$?
\end{open}

Note that $\DD{4}\neq \CC{4}$ since there are graphs in \DD{4} with connectivity $1$; see \secref{BlockStructure}.

%%%%%%%%%%%%%%%%%%%%%%%%%%%%%%%%%%%%%%%%%%%%%%%%%%%%%%%%%%%%%%%%%%%%%%%%%%%%%%%
\section{General Values of $k$}
%%%%%%%%%%%%%%%%%%%%%%%%%%%%%%%%%%%%%%%%%%%%%%%%%%%%%%%%%%%%%%%%%%%%%%%%%%%%%%%

Let $G$ be a graph. A vertex of $G$ is \emph{low-degree} if its degree equals the minimum degree of $G$. A vertex of $G$ is \emph{high-degree} if its degree is greater than the minimum degree of $G$. Recall that every graph in \DD{k} has minimum degree $k+1$. Thus a vertex of degree $k+1$ in a graph in \DD{k} is low-degree; every other vertex is high-degree. \lemref{BasicDegree} implies that for every graph $G\in\DD{k}$, the high-degree vertices in $G$ form an independent set.

%\begin{observation} \obslabel{HighIndependent} For every graph $G\in\DD{k}$, the high-degree vertices in $G$ form an independent set. \end{observation}

%\begin{proof} Suppose on the contrary that $v$ and $w$ are adjacent high-degree vertices in $G$. Then $v$ and $w$ both have degree at least $k+2$. Thus the proper minor of $G$ obtained by deleting the edge $vw$ has minimum degree at least $k+1$, which contradicts \obsref{BasicDegree}. \end{proof}

%\BOX{Conjecture: For every graph $G\in\CC{k}$, the high-degree vertices in $G$ form an independent set.}

\begin{proposition}
\proplabel{ManyLows}
Every graph $G\in\DD{k}$ has at least $k+2$ low-degree vertices (of degree $k+1$).
\end{proposition}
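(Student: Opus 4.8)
The plan is to argue by contradiction using a discharging/counting argument based on the characterisation in \lemref{BasicDegree}. Suppose $G\in\DD{k}$ has at most $k+1$ low-degree vertices. Since by (D4) the high-degree vertices form an independent set, pick any low-degree vertex $v$ (there is one by (D1)), and consider contracting a suitable edge incident to $v$ to produce a proper contraction minor whose minimum degree is still at least $k+1$, contradicting (D2). The key quantity to control is how many vertices drop below degree $k+1$ after a contraction: when we contract an edge $vw$, the only vertices whose degree can decrease are those adjacent to \emph{both} $v$ and $w$ (they lose exactly one neighbour), and the new merged vertex has degree $\deg(v)+\deg(w)-2-|N(v)\cap N(w)|$.

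First I would set up the count. Let $L$ be the set of low-degree vertices, so $|L|\le k+1$ by assumption, and every vertex not in $L$ has degree $\ge k+2$. Take a low-degree vertex $v$; all $k+1$ neighbours of $v$ together with $v$ itself would be $k+2$ vertices, so not all neighbours of $v$ can lie in $L\setminus\{v\}$ — at least one neighbour $w$ of $v$ is high-degree. Now contract the edge $vw$. The merged vertex has degree at least $\deg(w)-1\ge k+1$ (since $\deg(w)\ge k+2$ and we lose at most the one edge to the other endpoint plus any shared-neighbour identifications — here I'd compute carefully that $\deg(v)+\deg(w)-2-|N(v)\cap N(w)|\ge \deg(w)-1\ge k+1$ because $\deg(v)=k+1\ge |N(v)\cap N(w)|+1$). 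Every high-degree vertex adjacent to both $v$ and $w$ drops by one but still has degree $\ge k+1$. The danger is a \emph{low-degree} vertex adjacent to both $v$ and $w$: such a vertex would drop to degree $k$.

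The main obstacle is therefore exactly this: ruling out (or repairing the contradiction in the presence of) a low-degree common neighbour of $v$ and $w$. I would handle it by choosing $v$ and $w$ more cleverly, or by iterating: if contracting $vw$ creates some vertices of degree $k$, those new low-degree vertices are few, and one can set up an extremal/minimum-counterexample argument — take $G\in\DD{k}$ with $|L|$ minimum among all counterexamples, or minimise the number of vertices, and derive a contradiction by finding a proper \emph{minor} (allowing a short sequence of contractions) with minimum degree $\ge k+1$. An alternative, possibly cleaner route: observe that a graph with $\le k+1$ vertices of degree exactly $k+1$ and all others of degree $\ge k+2$ has at least $k+2$ vertices in total (if it had $\le k+1$ vertices it could not have any vertex of degree $\ge k+1$ in a simple graph unless it is $K_{k+2}$, which has $k+2$ vertices of degree $k+1$, contradiction), and then show directly that for any vertex $v$ of degree $k+1$ one can contract an edge at $v$ into a high-degree neighbour without creating a vertex of degree $<k+1$, using the fact that $v$ has at most $k$ other low-degree neighbours while $\deg(v)=k+1$, so there is slack. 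I expect the bookkeeping around shared neighbours to be the delicate point, but no single step should be deep.
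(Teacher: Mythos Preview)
Your contraction approach has a genuine gap that you yourself identify but do not close: when you contract $vw$ with $w$ high-degree, any low-degree common neighbour of $v$ and $w$ drops to degree $k$, and your proposed fixes (``choose more cleverly'', ``iterate'', ``minimise over counterexamples'') are hand-waving rather than arguments. There is no reason given why a clever choice exists or why iteration terminates favourably.

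More to the point, you are working much too hard. The paper's proof uses no contractions at all and is a two-line counting argument that you came within an inch of in your ``alternative, possibly cleaner route'' but then veered away from. The key observation is simply this: by (D4), every high-degree vertex is adjacent \emph{only} to low-degree vertices. A high-degree vertex has degree at least $k+2$, so it needs at least $k+2$ low-degree neighbours; but you are assuming there are at most $k+1$ low-degree vertices in total. Hence there are no high-degree vertices at all. Then every vertex is low-degree, so $|V(G)|\le k+1$, so the maximum degree is at most $k$, contradicting $\delta(G)=k+1$. That is the entire proof; condition (D2) is never invoked.
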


\begin{proof}
Suppose on the contrary that $G$ has at most $k+1$ low-degree vertices. By \lemref{BasicDegree}, each high-degree vertex is only adjacent to low-degree vertices. Since a high-degree vertex has degree at least $k+2$, there are no high-degree vertices. Thus $G$ has at most $k+1$ vertices. Thus $G$ has maximum degree at most $k$, which is a contradiction.
\end{proof}

For a set $S$ of vertices in a graph $G$, a \emph{common neighbour} of $S$ is a vertex in $V(G)-S$ that is adjacent to at least two vertices in $S$. A \emph{common neighbour} of an edge $vw$ is a common neighbour of $\{v,w\}$. Common neighbours are important because of the following observation.

\begin{observation}
\obslabel{DegreeChange}
Let $vw$ be an edge of a graph $G$ with $p$ common neighbours. Let $H$ be the graph obtained from $G$ by contracting $vw$ into a new vertex $x$. Then 
$$\deg_H(x)=\deg_G(v)+\deg_G(w)-p-2.$$
For every common neighbour $y$ of $vw$,
$$\deg_H(y)=\deg_G(y)-1.$$
For every other vertex $z$ of $H$,
$$\deg_H(z)=\deg_G(z).$$
\end{observation}

\begin{proposition} 
\proplabel{CommonNeighbour}
For every graph $G\in\DD{k}$, every edge $vw$ of $G$ has a low-degree common neighbour.
\end{proposition}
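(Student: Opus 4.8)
The plan is to argue by contradiction. Suppose some edge $vw$ of a graph $G\in\DD{k}$ has no low-degree common neighbour. When $k=1$ this is already absurd, since $\DD{1}=\{K_3\}$ and the third vertex of $K_3$ is a low-degree common neighbour of every edge; so I would assume $k\geq2$. By condition (D4) of \lemref{BasicDegree}, at least one endpoint of $vw$ has degree $k+1$, say $\deg_G(v)=k+1$. Let $H:=G/vw$ and let $x$ be the vertex of $H$ into which $v$ and $w$ are contracted. Since $H$ is a proper contraction minor of $G$, condition (D2) of \lemref{BasicDegree} gives $\delta(H)\leq k$, so some vertex $u$ of $H$ has $\deg_H(u)\leq k$. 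By \obsref{DegreeChange}, every vertex of $H$ other than $x$ and the common neighbours of $vw$ retains its $G$-degree, which is at least $k+1$; and if $u$ were a common neighbour of $vw$ then $\deg_G(u)=\deg_H(u)+1\leq k+1=\delta(G)$, making $u$ a low-degree common neighbour, contrary to assumption. Hence $u=x$ and $\deg_H(x)\leq k$.

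The next step is to pin down a rigid local structure around $vw$. Let $p$ be the number of common neighbours of $vw$; since these are neighbours of $v$ other than $w$, we have $p\leq\deg_G(v)-1=k$. By \obsref{DegreeChange}, $\deg_H(x)=\deg_G(v)+\deg_G(w)-p-2$, so $\deg_H(x)\leq k$ forces $\deg_G(w)\leq p+1\leq k+1$, hence $\deg_G(w)=k+1$ and $p=k$. Consequently every neighbour of $v$ besides $w$, and every neighbour of $w$ besides $v$, is a common neighbour of $vw$, so $N_G(v)\setminus\{w\}=N_G(w)\setminus\{v\}=:A$ with $|A|=k$; that is, $v$ and $w$ have the same closed neighbourhood. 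Each vertex of $A$ is a common neighbour of $vw$, hence high-degree by assumption; and since the high-degree vertices of $G$ form an independent set (a consequence of (D4)), the set $A$ is independent.

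Finally I would derive the contradiction by exhibiting a proper contraction minor of $G$ with minimum degree at least $k+1$, which (D2) forbids. As $k\geq2$ I would fix $a\in A$; then $\{v,w,a\}$ induces a triangle, because $a$ is a common neighbour of $vw$, and I would take $G'$ to be the graph obtained from $G$ by contracting this triangle to a single vertex $z$ (two edge contractions). A short degree count --- using that the neighbours of $a$ outside $\{v,w\}$ all avoid $A$ --- gives $\deg_{G'}(z)=\deg_G(a)+k-3\geq 2k-1\geq k+1$; each remaining vertex of $A$ loses exactly its two neighbours $v,w$ and gains $z$, so its degree drops by one, to at least $k+1$; and every other vertex of $G$ is adjacent to neither $v$ nor $w$, so keeps its degree, which is at least $k+1$. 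Thus $\delta(G')\geq k+1$, contradicting (D2), and therefore $vw$ has a low-degree common neighbour. The crux is the middle step: the only way to dodge a low-degree common neighbour forces $v$ and $w$ to be true twins whose common neighbourhood is independent and entirely high-degree, and that is precisely the configuration the triangle contraction collapses without introducing a new low-degree vertex. By comparison, the degree bookkeeping in the final step and the $k=1$ case are routine.
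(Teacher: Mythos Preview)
Your proof is correct and follows essentially the same route as the paper's: both first contract $vw$ and use (D2) together with \obsref{DegreeChange} to force $p=k$, $\deg_G(v)=\deg_G(w)=k+1$, and the common neighbourhood $A$ to be an independent set of high-degree vertices, and then both contract the triangle $\{v,w,a\}$ (the paper writes $u_1$ for your $a$) and verify that the resulting minor has minimum degree at least $k+1$. The only cosmetic difference is that you phrase the first step by locating the low-degree vertex of $H$ and ruling out alternatives, whereas the paper directly checks that $\delta(H)\geq k+1$ when $p\leq k-1$; the arithmetic and the conclusion are identical.
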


\begin{proof}
If $k=1$ then $G=K_3$ and the result is trivial. Now assume that $k\geq 2$. 

Suppose on the contrary that for some edge $vw$ of $G$, every common neighbour of $vw$ (if any) is high-degree. By \lemref{BasicDegree}, at least one of $v$ and $w$ is low-degree (with degree $k+1$). Thus $v$ and $w$ have at most $k$ common neighbours. Let $u_1,\dots,u_p$ be the common neighbours of $v$ and $w$, where $0\leq p\leq k$. 

Let $H$ be the graph obtained from $G$ by contracting $vw$ into a new vertex $x$. The degree of each vertex of $G$ is unchanged in $H$, except for $v$, $w$, and each $u_i$. Since $\deg_G(u_i)\geq k+2$, we have $\deg_H(u_i)\geq k+1$. By \obsref{DegreeChange},
\begin{equation*}
\deg_H(x)=\deg_G(v)+\deg_G(w)-p-2
	\geq2(k+1)-p-2
	=2k-p\enspace.
\end{equation*}
Thus if $p\leq k-1$ then $\deg_H(x)\geq k+1$ and $H$ is a proper minor of $G$ with minimum degree at least $k+1$, implying $G\not\in\DD{k}$. 

Otherwise $p=k$, implying both $v$ and $w$ are low-degree vertices whose only neighbours are each other and the high-degree vertices $u_1,\dots,u_k$. Let $J$ be the graph obtained from $G$ by contracting $v,w,u_1$ into a new vertex $y$. Since each neighbour of $v$ is high-degree and each neighbour of $w$ is high-degree, if a vertex (other than $v,w,u_1$) is adjacent to at least two of $v,w,u_1$ then it is high-degree. Since no two high-degree vertices are adjacent, the only vertices  (other than $v,w,u_1$) that are adjacent to at least two of $v,w,u_1$ are $u_2,\dots,u_k$. Thus every vertex in $J$ (possibly except $y$) has degree at least $k+1$. Now $u_1$ has at least $k$ neighbours in $G$ outside of $\{v,w,u_2,\dots,u_k\}$. Thus $\deg_J(y)\geq k+(k-1)\geq k+1$, and $J$ is a proper minor of $G$ with minimum degree at least $k+1$, implying $G\not\in\DD{k}$.
\end{proof}

The next result says that for graphs in \DD{k}, every sufficiently sparse connected induced subgraph has a common neighbour.

\begin{proposition} 
For every graph $G\in\DD{k}$, for every connected induced subgraph $H$ of $G$ with $n$ vertices and $m\leq\half(k+1)(n-1)$ edges, there exists a vertex $x$ in $G-H$ adjacent to at least $\deg_G(x)-k+1\geq2$ vertices in $H$.
\end{proposition}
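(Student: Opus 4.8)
The plan is to contract the connected induced subgraph $H$ to a single vertex and analyse its degree, exactly as in the proofs of \propref{CommonNeighbour} and \propref{ManyLows}. Since $H$ is connected on $n$ vertices, fix a spanning tree of $H$ and let $J$ be the graph obtained from $G$ by contracting all $n-1$ tree edges, so that $H$ collapses to a single new vertex $y$. All vertices of $G-H$ that are nonadjacent to $H$, and all vertices of $G-H$ adjacent to exactly one vertex of $H$, retain their $G$-degree in $J$. A vertex $x\in G-H$ with $a_x\ge 2$ neighbours in $H$ has $\deg_J(x)=\deg_G(x)-(a_x-1)$. Suppose, for contradiction, that no $x$ satisfies the conclusion; then every such $x$ has $a_x-1\ge \deg_G(x)-k$, i.e.\ $\deg_J(x)\le k$. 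But if $J$ is a proper minor of $G$ (which holds since $n\ge 2$, so at least one edge was contracted) then by \lemref{BasicDegree}(D2) $J$ has a vertex of degree at most $k$; if that vertex is not $y$ we already have a contradiction in the style above — so the real content is to force $\deg_J(y)\ge k+1$ regardless, and then derive the contradiction from the vertices $x$ alone.

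So the key step is to bound $\deg_J(y)$ from below using the edge-count hypothesis $m\le\tfrac12(k+1)(n-1)$. Here is the counting: $\deg_J(y)$ equals the number of vertices of $G-H$ that have at least one neighbour in $H$ — call this set $N$ and put $t=|N|=\deg_J(y)$. On the other hand, summing $\deg_G(v)$ over $v\in V(H)$ counts each edge inside $H$ twice and each edge from $H$ to $N$ once, so $\sum_{v\in V(H)}\deg_G(v)=2m+e(H,N)$, where $e(H,N)$ is the number of edges between $H$ and $N$. Since $G\in\DD{k}$ has minimum degree $k+1$, the left side is at least $(k+1)n$, giving $e(H,N)\ge (k+1)n-2m\ge (k+1)n-(k+1)(n-1)=k+1$. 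Thus there are at least $k+1$ edges from $H$ to $N$. If $t=|N|\le k$ then some vertex $x\in N$ receives at least two of these edges, so $a_x\ge 2$; but we also need every $x\in N$ to be ``bad'', i.e.\ $a_x\ge\deg_G(x)-k+1$, which together with $\deg_G(x)\ge k+1$ forces $a_x\ge 2$ anyway — the point is to get a \emph{global} contradiction. Summing the bad inequality $a_x\ge\deg_G(x)-k+1\ge 2$ over $x\in N$ gives $e(H,N)=\sum_{x\in N}a_x\ge 2t=2\deg_J(y)$; combined with $\deg_J(y)\le k$ (from assuming $y$ is not the low vertex) this is consistent, so I instead argue: every vertex of $J$ except possibly $y$ has degree $\ge k+1$, because a vertex of $G-H$ is bad only if its $J$-degree drops to $\le k$, and we are assuming there are no bad vertices — wait, we are assuming there \emph{are} no good vertices, i.e.\ every $x$ is bad. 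Let me restate cleanly below.

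Cleanly: assume for contradiction that every $x\in N$ is bad, meaning $x$ is adjacent to at most $\deg_G(x)-k+1-1=\deg_G(x)-k$ vertices of $H$, i.e.\ $a_x\le\deg_G(x)-k$, equivalently $\deg_J(x)=\deg_G(x)-(a_x-1)\ge k+1$. Then every vertex of $J$ other than $y$ has $J$-degree at least $k+1$: this is clear for vertices of $H$'s complement not in $N$ (their degree is unchanged and was $\ge k+1$) and for vertices in $N$ by the badness assumption. Hence if also $\deg_J(y)\ge k+1$, then $J$ is a proper minor of $G$ with minimum degree $\ge k+1$, contradicting \lemref{BasicDegree}(D2) (or the definition of $\DD{k}$). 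It therefore suffices to show $\deg_J(y)\ge k+1$; but $\deg_J(y)=t=|N|$ and $\sum_{x\in N}a_x=e(H,N)\ge k+1$ from the edge count above, while the badness assumption gives $a_x\le\deg_G(x)-k$; I expect the main obstacle to be that this upper bound on $a_x$ alone does not immediately force $t\ge k+1$, so one must instead observe that each $a_x\ge 1$ gives only $t\le e(H,N)$, which is the wrong direction — the resolution is that we do \emph{not} need $\deg_J(y)$ large; rather, from $e(H,N)\ge k+1$ and $a_x\ge 1$ we get $t=|N|\ge 1$, and then among $x\in N$, since $\sum a_x\ge k+1$ and there are $t$ of them, if $t\le k$ some $x$ has $a_x\ge 2$, which only tells us $x$ might still be bad. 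The correct final move, which I will carry out, is: pick $x\in N$ maximising $a_x$; then $a_x\ge\lceil (k+1)/t\rceil$, and badness says $a_x\le\deg_G(x)-k$, so $\deg_G(x)\ge k+\lceil(k+1)/t\rceil$; feeding this back while also using that $H$ being small means $t$ cannot be too large relative to the total degree around $H$ should close the gap — the delicate bookkeeping here, balancing $t$ against the degree surplus $\sum_{v\in V(H)}(\deg_G(v)-(k+1))$, is the step I expect to require the most care.
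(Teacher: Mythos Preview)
Your approach is exactly the paper's: contract $H$ to a single vertex and show that the resulting graph has minimum degree at least $k+1$, contradicting (D2). The paper's entire proof after the setup is the single line
\[
\deg_{G'}(v)=\Bigl(\sum_{w\in V(H)}\deg_G(w)\Bigr)-2m\ge n(k+1)-2m\ge k+1,
\]
together with the observation (which you also make) that each $x\in G-H$ keeps degree at least $\deg_G(x)-(\deg_G(x)-k)+1=k+1$.

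Where you diverge is in insisting that $\deg_J(y)=|N|$ rather than $e(H,G-H)$, and then trying to bound $|N|$ from below. You are right that in the simple graph the degree of the contracted vertex is $|N|$, and that $\sum_w\deg_G(w)-2m$ counts \emph{edges} leaving $H$, not distinct neighbours; the paper silently identifies the two. But from that point your argument never closes: you compute $e(H,N)\ge k+1$, then try several tacks (averaging to find an $x$ with large $a_x$, summing badness inequalities, pushing degree surplus back into $H$), each time stopping with ``the delicate bookkeeping\dots is the step I expect to require the most care.'' None of these lines is carried to a contradiction, so the proposal is genuinely incomplete.

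In short: your strategy matches the paper's, and your degree computation for vertices of $G-H$ is correct and matches the paper's. The only missing piece is the lower bound on $\deg_J(y)$, which the paper dispatches in one displayed equation by equating it with the edge count $\sum_w\deg_G(w)-2m$; you spotted that this step deserves scrutiny, but you did not supply an argument that fills it, so as a proof your write-up has a gap where the paper simply asserts the needed bound.
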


\begin{proof}
Suppose that for some connected induced subgraph $H$ with $n$ vertices and $m\leq\half(k+1)(n-1)$ edges, every vertex $x$ in $G-H$ is adjacent to at most $\deg_G(x)-k$ vertices in $H$. Let $G'$ be the graph obtained from $G$ by contracting $H$ into a single vertex $v$. The degree of every vertex in $G-H$ is at least $\deg_G(x)-(\deg_G(x)-k)+1=k+1$ in $G'$. Since $G$ has minimum degree $k+1$, we have
$$\deg_{G'}(v)
=\bracket{\sum_{w\in V(H)}\!\!\!\deg_G(w)}-2m
\geq n(k+1)-2m
\geq k+1.$$
Thus $G'$ is a proper minor of $G$ with minimum degree at least $k+1$. Hence $G\not\in\DD{k}$. This contradiction proves the result.
\end{proof}

\begin{corollary} 
For every graph $G\in\DD{k}$, for every clique $C$ of $G$ with at most $k+1$ vertices, there exists a vertex in $V(G)-C$ adjacent to at least two vertices of $C$.
\end{corollary}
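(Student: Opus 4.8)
The plan is to derive this as an immediate special case of the preceding proposition. Given a graph $G \in \DD{k}$ and a clique $C$ of $G$ with $|C| = n \leq k+1$ vertices, let $H := G[C]$, which is a connected induced subgraph on $n$ vertices. Since $C$ is a clique, $H$ has exactly $m = \binom{n}{2} = \half n(n-1)$ edges. To invoke the proposition I need $m \leq \half(k+1)(n-1)$, i.e.\ $\half n(n-1) \leq \half(k+1)(n-1)$, which holds precisely because $n \leq k+1$. So the hypothesis of the proposition is satisfied.

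Applying the proposition, there exists a vertex $x$ in $G - H = G - C$ adjacent to at least $\deg_G(x) - k + 1 \geq 2$ vertices in $H$, hence adjacent to at least two vertices of $C$. This is exactly the claimed conclusion, so the corollary follows.

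There is essentially no obstacle here — the only thing to check is the arithmetic inequality $\binom{n}{2} \leq \half(k+1)(n-1)$ under the assumption $n \leq k+1$, which is routine (both sides have the common factor $\half(n-1) \geq 0$, and $n \leq k+1$ gives the rest). The only mild subtlety worth a remark is the degenerate case $n = 1$: then $C$ is a single vertex, $H$ is a single vertex with $m = 0$ edges, and the inequality $0 \leq 0$ still holds, so the proposition still applies and produces a vertex $x$ adjacent to "at least two vertices of $C$" — but this would require $x$ to be adjacent to the one vertex of $C$ with multiplicity, which is impossible in a simple graph. However, this is not a real gap: when $n = 1$ we in fact have $\deg_G(x) - k + 1 \geq 2$ forcing $\deg_G(x) \geq k+1$, consistent with $\delta(G) = k+1$, and the statement is only interesting for $|C| \geq 2$; one can simply note that for $|C| = 1$ the conclusion is vacuous or restrict attention to $|C| \geq 2$, where everything goes through cleanly.
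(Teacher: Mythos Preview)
Your proof is correct and is exactly the intended derivation: the paper states this result as an immediate corollary of the preceding proposition with no separate proof, and your verification that a clique on $n\leq k+1$ vertices satisfies the edge bound $\binom{n}{2}\leq\half(k+1)(n-1)$ is precisely the missing one-line check. Your observation about the degenerate case $|C|=1$ is valid---the conclusion is not literally vacuous but actually fails there, so the corollary (and indeed the proposition) tacitly require $n\geq 2$; this is a wrinkle in the statement rather than in your argument.
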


%%%%%%%%%%%%%%%%%%%%%%%%%%%%%%%%%%%%%%%%%%%%%%%%%%%%%%%%%%%%%%%%%%%%%%%%%%%%%%%
\section{Small Regular Graphs are in \DD{k}}
\seclabel{SmallRegularGraphs}
%%%%%%%%%%%%%%%%%%%%%%%%%%%%%%%%%%%%%%%%%%%%%%%%%%%%%%%%%%%%%%%%%%%%%%%%%%%%%%%

In this section we show that that every $(k+1)$-regular graph with sufficiently few vertices is in \DD{k}. Moreover, the bound on the number of vertices is tight.

\begin{lemma}
\lemlabel{ManyTrianglesGraphs}
Let $G$ be a connected $(k+1)$-regular graph on $n$ vertices. 
If every edge of $G$ is in at least $2n-2k-5$ triangles, then $G\in\DD{k}$.
\end{lemma}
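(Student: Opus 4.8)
The plan is to verify the four conditions (D1)--(D4) of \lemref{BasicDegree} for $G$, where the only nontrivial one is (D2): every proper contraction minor of $G$ has minimum degree at most $k$. Conditions (D1), (D3), (D4) are immediate because $G$ is connected and $(k+1)$-regular, so there are no high-degree vertices at all. For (D2), suppose for contradiction that $H = G/S$ is a proper contraction minor with $\delta(H) \geq k+1$, where $S \subseteq E(G)$ is nonempty. I would contract the edges of $S$ one at a time and track how degrees evolve, using \obsref{DegreeChange}: contracting an edge $vw$ with $p$ common neighbours creates a vertex of degree $\deg(v)+\deg(w)-p-2$ and drops the degree of each common neighbour by one.

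The key quantitative idea is that in a $(k+1)$-regular graph on $n$ vertices, an edge in many triangles has many common neighbours, so contracting it produces a vertex of \emph{low} degree, not high degree. Concretely, if $vw$ lies in at least $t := 2n-2k-5$ triangles, then after contracting $vw$ the new vertex $x$ has degree at most $2(k+1) - t - 2 = 2k - 2n + 2k + 5 = 4k - 2n + 5$. I would compare this with the number of vertices remaining: after one contraction $H$ has $n-1$ vertices, and more generally after $|S|$ contractions it has $n - |S|$ vertices. Since a simple graph on $n-|S|$ vertices has maximum degree at most $n-|S|-1$, the real content is to show that the "merged" branch set containing contracted edges ends up as a vertex whose degree is forced below $k+1$, or that some common neighbour's degree is pushed below $k+1$. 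The cleanest route: let $x$ be a vertex of $H$ whose branch set $B$ has $|B| \geq 2$ (such exists since $S \neq \emptyset$), and bound $\deg_H(x)$ by counting edges leaving $B$. Each vertex of $B$ has degree $k+1$ in $G$, so the number of edges from $B$ to $V(G)\setminus B$ is at most $(k+1)|B| - 2(|B|-1) = (k-1)|B| + 2$ (using that $G[B]$ is connected, hence has at least $|B|-1$ edges); but I also want an \emph{upper} bound independent of $|B|$ exploiting the triangle hypothesis. Using that each edge inside $B$ kills at least $t$ triangles' worth of external adjacencies (with multiplicity), a careful count should give $\deg_H(x) \leq 2(k+1) - 2 - t + (\text{correction for overlaps})$, and plugging $t = 2n-2k-5$ together with $\deg_H(x) \leq n - |S| - 1 \leq n-2$ should yield a contradiction with $\deg_H(x) \geq k+1$.

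I would organize the edge-count as follows. Orient a spanning tree of $G[B]$ and process its $|B|-1$ edges; by \obsref{DegreeChange} each contraction of an edge with $p \geq t - (\text{already-contracted neighbours})$ common neighbours among current vertices reduces the running degree of the merged vertex relative to the sum of the two merged degrees by $p+2$. Keeping a running total, after contracting all of $B$ into $x$ we get $\deg_H(x) \leq (k+1)|B| - 2(|B|-1) - \sum p_i$ where $\sum p_i$ counts (with care about double-counting) the triangle-edges; since every edge sits in $\geq t$ triangles and there are $|B|-1$ tree edges, $\sum p_i \geq (|B|-1)(t) - O(|B|^2)$ after discarding triangles with apex inside $B$. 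Combined with the crude bound $\deg_H(x)\le n-1-|S| \le n-1-(|B|-1) = n-|B|$, we obtain two inequalities in $|B|$ that are jointly infeasible once $t \geq 2n-2k-5$.

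The main obstacle is precisely this double-counting bookkeeping: triangles on a tree edge $e$ of $G[B]$ whose apex lies inside $B$, or is another vertex of $B$ adjacent to both endpoints, do not contribute to lowering $\deg_H(x)$, and some common neighbours may be shared between several tree edges, so the naive sum $\sum p_i$ overcounts. The fix is to not track degrees edge-by-edge but instead directly count, in $G$, the pairs (external vertex $u$, edge of $G[B]$ it forms a triangle with) to get a lower bound on how many external vertices are adjacent to $\geq 2$ vertices of $B$ — each such $u$ is "absorbed" into a single $H$-edge $xu$ rather than $|B|$ of them. This reduces $\deg_H(x)$ from the trivial $(k+1)|B|$ down past $k+1$. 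I expect this to be where the precise constant $2n-2k-5$ (as opposed to something weaker) is forced, and it is worth double-checking the extremal case $|B| = 2$ separately since there the bound is tightest.
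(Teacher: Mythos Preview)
Your ``cleanest route'' --- bounding the degree of the merged vertex $x$ whose branch set $B$ has $|B|\geq 2$ --- does not work, and the failure shows up precisely at the case $|B|=2$ you flagged for checking. Take $G=K_{2,2,2,2}$, which is $6$-regular on $8$ vertices (so $k=5$, $n=8$, and the triangle hypothesis holds with $t=2n-2k-5=1$). Contract a single edge $vw$: the merged vertex has degree $2(k+1)-p-2=12-4-2=6=k+1$, so nothing forces $\deg_H(x)\leq k$. More generally, your bound $\deg_H(x)\leq 4k-2n+5$ only gives $\leq k$ when $n\geq(3k+5)/2$, whereas the lemma is applied in \thmref{RegularGraphs} for $n<\tfrac{4}{3}(k+2)<(3k+5)/2$ (for $k\geq 2$). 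Combining it with the trivial bound $\deg_H(x)\leq n-|B|$ still leaves the whole range $k+3\leq n\leq(3k+4)/2$ uncovered when $|B|=2$. The vertex of low degree is not the merged vertex; it is one of the common neighbours of $vw$, which you mentioned as an alternative and then set aside.

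The paper's argument is exactly that alternative, carried out with a single pigeonhole step and no degree-tracking. Let $S$ be the set of vertices of $G$ incident to some contracted edge (equivalently, the union of the nontrivial branch sets). Each nontrivial branch set has size at least $2$, so $|S|\leq 2(n-|V(H)|)\leq 2n-2k-4$, using $|V(H)|\geq k+2$ from $\delta(H)\geq k+1$. Fix any contracted edge $vw$; its set $T$ of common neighbours has $|T|\geq 2n-2k-5>|S|-2=|S\setminus\{v,w\}|$. Hence some $y\in T$ lies outside $S$, so $\{y\}$ is its own branch set in $H$; but $v$ and $w$ lie in the same branch set, so $\deg_H(y)\leq\deg_G(y)-1=k$, contradicting $\delta(H)\geq k+1$. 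This avoids all the bookkeeping about overlaps, spanning trees of $G[B]$, and running degree totals.
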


\begin{proof}
By assumption, conditions (D1), (D3) and (D4) of \lemref{BasicDegree} are satisfied by $G$. Suppose on the contrary that $H$ is a contraction minor of $G$ with minimum degree at least $k+1$. Let $S$ be the set of vertices of $G$ that are incident to an edge contracted in the construction of $H$. Let $vw$ be one such edge. We have $|S|\leq2(n-|V(H)|)\leq2n-2k-4$. By assumption, there is a set $T$ of vertices of $G$ that are adjacent to both $v$ and $w$, and $|T|\geq 2n-2k-5\geq|S|-1$. Thus there is at least one vertex $x\in T-(S-\{v,w\})$, which is a vertex of $H$. Since $x$ is adjacent to both endpoints of the contracted edge $vw$, $\deg_H(x)\leq k$. This contradiction proves condition (D2) for $G$. \lemref{BasicDegree} implies that $G\in\DD{k}$.
\end{proof}

\begin{lemma}
\lemlabel{ManyTriangles}
For every $(k+1)$-regular graph $G$ on $n$ vertices, every edge $vw$ of $G$ is in at least $2k+2-n$ triangles.
\end{lemma}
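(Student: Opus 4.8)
The statement is a clean counting fact about $(k+1)$-regular graphs, so the plan is to count, for a fixed edge $vw$, the vertices that fail to be common neighbours of $v$ and $w$. Write $N(v)$ and $N(w)$ for the neighbourhoods. The number of triangles through $vw$ is exactly $|N(v)\cap N(w)|$ minus the contribution of $w\in N(v)$ and $v\in N(w)$; more carefully, a common neighbour of the edge $vw$ is a vertex in $V(G)\setminus\{v,w\}$ adjacent to both $v$ and $w$, and each such vertex gives one triangle on $vw$. So I would set $t:=$ number of triangles containing $vw$ and aim to show $t\ge 2k+2-n$.

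The key step is inclusion–exclusion on the sets $N(v)\setminus\{w\}$ and $N(w)\setminus\{v\}$, both of which have size exactly $k$ since $G$ is $(k+1)$-regular. Their union lies inside $V(G)\setminus\{v,w\}$, which has $n-2$ vertices, so
\begin{equation*}
k + k - t = \bigl|N(v)\setminus\{w\}\bigr| + \bigl|N(w)\setminus\{v\}\bigr| - \bigl|\bigl(N(v)\setminus\{w\}\bigr)\cap\bigl(N(w)\setminus\{v\}\bigr)\bigr| = \bigl|\bigl(N(v)\setminus\{w\}\bigr)\cup\bigl(N(w)\setminus\{v\}\bigr)\bigr| \le n-2.
\end{equation*}
Rearranging gives $t \ge 2k - (n-2) = 2k+2-n$, which is exactly the claimed bound. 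The only mild subtlety is bookkeeping around whether $v\in N(w)$ and $w\in N(v)$ (they are, since $vw$ is an edge), which is why I subtract $w$ from $N(v)$ and $v$ from $N(w)$ before intersecting; this makes the intersection precisely the set of common neighbours of the edge, of size $t$.

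I do not expect a real obstacle here — the estimate is tight when $(N(v)\setminus\{w\})\cup(N(w)\setminus\{v\}) = V(G)\setminus\{v,w\}$, i.e.\ when every vertex other than $v,w$ is adjacent to $v$ or to $w$. The one thing to be careful about is that the bound is only meaningful (nonnegative) when $n \le 2k+2$; for larger $n$ it is vacuously true, and no regularity or connectivity hypothesis beyond $(k+1)$-regularity is actually needed. Combined with \lemref{ManyTrianglesGraphs}, this will yield that every $(k+1)$-regular graph is in \DD{k} whenever $2k+2-n \ge 2n-2k-5$, i.e.\ $3n \le 4k+7$, i.e.\ $n < \frac{4}{3}(k+2)$, which is the first half of \thmref{SmallRegular}.
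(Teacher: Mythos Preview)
Your proof is correct and is essentially the same inclusion--exclusion counting as the paper's own argument: the paper notes that $v,w$, their $t$ common neighbours, and the $k-t$ private neighbours on each side give $n\ge 2+t+2(k-t)=2k+2-t$, which is exactly your union bound rearranged.
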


\begin{proof}
Say $vw$ is in $t$ triangles. Thus $v$ and $w$ have $t$ common neighbours. Thus $v$ has $k-t$ neighbours not adjacent to $w$, and $w$ has $k-t$ neighbours not adjacent to $v$. Thus $n\geq 2+t+2(k-t)=2k+2-t$, implying $t\geq2k+2-n$. 
\end{proof}

\begin{theorem}
\thmlabel{RegularGraphs}
Every $(k+1)$-regular graph $G$ with $n<\frac{4}{3}(k+2)$ vertices is in \DD{k}.
\end{theorem}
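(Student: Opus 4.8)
The plan is to combine the two preceding lemmas. Lemma~\ref{lem:ManyTriangles} says that in any $(k+1)$-regular graph on $n$ vertices, every edge lies in at least $2k+2-n$ triangles. Lemma~\ref{lem:ManyTrianglesGraphs} says that if $G$ is connected, $(k+1)$-regular, and every edge lies in at least $2n-2k-5$ triangles, then $G\in\DD{k}$. So it suffices to check that the lower bound guaranteed by Lemma~\ref{lem:ManyTriangles} meets the threshold required by Lemma~\ref{lem:ManyTrianglesGraphs}, i.e. that
\[
2k+2-n \;\geq\; 2n-2k-5,
\]
which rearranges to $3n \leq 4k+7$, i.e. $n \leq \tfrac{1}{3}(4k+7) = \tfrac{4}{3}(k+2) - \tfrac{1}{3}$. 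Since $n$ is an integer, $n < \tfrac{4}{3}(k+2)$ is equivalent to $n \leq \tfrac{4}{3}(k+2) - \tfrac{1}{3}$ when $3 \mid (k+2)$, and is even weaker otherwise; in all cases $n < \tfrac{4}{3}(k+2)$ implies $3n \leq 4k+7$, so the inequality above holds.

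First I would dispose of a connectivity technicality: Lemma~\ref{lem:ManyTrianglesGraphs} assumes $G$ is connected, whereas \thmref{RegularGraphs} does not. If $G$ is disconnected, each component is itself a $(k+1)$-regular graph on fewer than $\tfrac{4}{3}(k+2)$ vertices, hence in $\DD{k}$ by the connected case — but then $G$ itself contains a proper minor (a component) with minimum degree $k+1$, so $G\notin\DD{k}$. Hmm — so actually a disconnected $(k+1)$-regular graph is \emph{not} in $\DD{k}$, which means the theorem as literally stated would be false unless we read ``graph'' as connected or unless no disconnected example is small enough. Since each component has at least $k+2$ vertices, a disconnected example needs at least $2(k+2) > \tfrac{4}{3}(k+2)$ vertices, so the hypothesis $n < \tfrac43(k+2)$ forces $G$ to be connected. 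I would state this observation explicitly at the start of the proof.

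So the proof proceeds: (1) note $n < \tfrac{4}{3}(k+2)$ and the component count force $G$ connected; (2) verify the arithmetic inequality $3n \leq 4k+7$, hence $2k+2-n \geq 2n-2k-5$; (3) apply Lemma~\ref{lem:ManyTriangles} to conclude every edge is in at least $2k+2-n \geq 2n-2k-5$ triangles; (4) apply Lemma~\ref{lem:ManyTrianglesGraphs} to conclude $G\in\DD{k}$. The only point requiring the slightest care is step~(2), matching the strict inequality over the rationals with the integrality of $n$ so that the required triangle bound is genuinely met; everything else is immediate from the cited lemmas. There is no real obstacle here — this theorem is essentially the clean corollary assembling the two technical lemmas that precede it, and the matching bound $\tfrac{4}{3}$ in both the ``$2n-2k-5$'' threshold and the extremal examples is exactly where the constant $\tfrac{4}{3}$ in the statement comes from.
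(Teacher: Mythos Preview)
Your proposal is correct and follows essentially the same route as the paper: first observe that a disconnected $(k+1)$-regular graph has at least $2(k+2)>\tfrac{4}{3}(k+2)$ vertices so $G$ must be connected, then combine \lemref{ManyTriangles} with \lemref{ManyTrianglesGraphs} via the arithmetic $n<\tfrac{4}{3}(k+2)\Rightarrow 3n\le 4k+7\Rightarrow 2k+2-n\ge 2n-2k-5$. The only difference is cosmetic: the paper states the integrality step more tersely (simply writing ``since $n\le\tfrac{1}{3}(4k+7)$''), whereas your discussion of the divisibility cases is unnecessary since $3n<4k+8\iff 3n\le 4k+7$ holds for all integers regardless of whether $3\mid(k+2)$.
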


\begin{proof} 
Every disconnected $(k+1)$-regular graph has at least $2k+4$ vertices. Since $n<2k+4$ we can assume that $G$ is connected. By \lemref{ManyTriangles}, every edge of $G$ is in at least $2k+2-n$ triangles. Now $2k+2-n\geq 2n-2k-5$ since $n\leq\frac{1}{3}(4k+7)$. Thus  every edge of $G$ is in at least $2n-2k-5$ triangles. By \lemref{ManyTrianglesGraphs}, $G\in\DD{k}$. 
\end{proof}

%\begin{proof} Suppose on the contrary that $H$ is a proper minor of $G$ with minimum degree at least $k+1$. By taking such an $H$ with the maximum number of edges, we can assume that $H$ is obtained from $G$ by edge contractions and vertex deletions only (no edge deletions). Let $S$ be the set of vertices of $G$ that were deleted or had an incident edge contracted in the construction of $H$. Thus $|S|\leq2(n-|V(H)|)\leq2n-2k-4$.

%Suppose that at least one edge $vw$ was contracted in the construction of $H$. By \lemref{ManyTriangles}, there is a set $T$ of vertices of $G$ that are adjacent to both $v$ and $w$, and $|T|\geq 2k+2-n>2n-2k-6\geq|S|-2$. Thus there is at least one vertex $x\in T-(S-\{v,w\})$, which is a vertex of $H$. Since $x$ is adjacent to both endpoints of the contracted edge $vw$, $\deg_H(x)\leq k$, which is a contradiction.

%Now assume that $H$ was obtained from $G$ by only deleting vertices. Let $v$ be one of the deleted vertices of $G$. There are $n-|V(H)|-1$ other deleted vertices. Now $n-|V(H)|-1<\frac{4}{3}(k+2)-(k+2)-1=\third(k-1)\leq k+1=\deg_G(v)$. Thus there is at least one vertex $x$ adjacent to $v$ that is not deleted. Hence $\deg_H(x)\leq k$, which is a contradiction. 

%Thus every proper minor of $G$ has minimum degree at most $k$. Therefore $G\in\DD{k}$. \end{proof}

\thmref{RegularGraphs} is best possible in the following sense.

\begin{proposition}
\proplabel{TightRegularGraphs}
For all $k\equiv1\pmod{3}$ there is a $(k+1)$-regular graph $G$ on $n=\frac{4}{3}(k+2)$ vertices that is not in \DD{k}.
\end{proposition}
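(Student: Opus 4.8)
The plan is to exhibit an explicit such graph. Since $k\equiv 1\pmod 3$, the number $m:=\tfrac13(k+2)$ is a positive integer, and we have $k+1=3m-1$ and $\tfrac43(k+2)=4m$. Let $G:=(2K_m)*(2K_m)$, the join of two disjoint copies of $K_m\cup K_m$; equivalently $G=\overline{2K_{m,m}}$. Each vertex of $G$ has $m-1$ neighbours inside its own copy of $K_m$ and $2m$ neighbours across the join, so $G$ is $(3m-1)$-regular, and $|V(G)|=4m$. Thus $G$ is a $(k+1)$-regular graph on $\tfrac43(k+2)$ vertices, and it remains to show $G\notin\DD{k}$.

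Name the vertices so that the first factor of the join is $A\cup B$, where $A=\{a_1,\dots,a_m\}$ and $B=\{b_1,\dots,b_m\}$ are disjoint cliques with no $A$--$B$ edge, the second factor is $C\cup D$ with $C=\{c_1,\dots,c_m\}$, $D=\{d_1,\dots,d_m\}$ disjoint cliques with no $C$--$D$ edge, and all edges between $A\cup B$ and $C\cup D$ present. I will take the minor $H$ obtained by contracting the $m$ edges of the perfect matching $\{a_ic_i:1\le i\le m\}$ (each $a_ic_i$ is an edge of $G$ since it joins $A$ to $C$). The branch sets of $H$ are the pairs $\{a_i,c_i\}$ together with the singletons $b_i$ and $d_i$, so $|V(H)|=3m<4m$ and $H$ is a proper minor of $G$. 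The key claim is $H\cong K_{3m}$: two branch sets are adjacent in $H$ whenever some edge of $G$ joins them, and this holds for every pair. For instance $\{a_i,c_i\}$ and $\{a_j,c_j\}$ ($i\ne j$) are joined by the clique edge $a_ia_j$; $\{a_i,c_i\}$ and $b_j$ by the join edge $c_ib_j$; $\{a_i,c_i\}$ and $d_j$ by $a_id_j$; $b_i$ and $b_j$ by a clique edge of $B$; $b_i$ and $d_j$ by a join edge; and $d_i$ and $d_j$ by a clique edge of $D$. Checking these few cases is the only real computation, and it is routine; I do not anticipate any genuine obstacle.

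Finally I would conclude as follows. The graph $K_{3m}$ has minimum degree $3m-1=k+1>k$, so it has a minor (itself) of minimum degree exceeding $k$, hence $K_{3m}\notin\D{k}$. Since $K_{3m}\cong H$ is a proper minor of $G$ that is not in $\D{k}$, the graph $G$ is not a minimal forbidden minor of $\D{k}$; that is, $G\notin\DD{k}$, as required. As a consistency check, one can use \lemref{ManyTriangles} to verify that each "cross" edge $a_ic_j$ with $i\ne j$ lies in exactly $2m-2=2k+2-n$ triangles, one short of the bound $2n-2k-5$ appearing in \lemref{ManyTrianglesGraphs}, which is exactly what one expects at the extremal value $n=\tfrac43(k+2)$.
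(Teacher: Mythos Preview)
Your proof is correct and follows essentially the same approach as the paper: your graph $G=(2K_m)*(2K_m)=\overline{2K_{m,m}}$ is precisely the graph the paper constructs (where $\overline{G}$ is the disjoint union of two copies of $K_{m,m}$), and your matching $\{a_ic_i\}$ is an instance of the paper's matching $M$, whose contraction yields the $K_{3m}$-minor witnessing $G\notin\DD{k}$. Your verification is simply a more explicit version of the paper's one-line claim that every vertex is adjacent to at least one endpoint of each matching edge.
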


\begin{proof}
Let $p:=\frac{1}{3}(k+2)$. Then $p\in\Z$. Let $G$ be the graph whose complement $\overline{G}$ is the disjoint union of $K_{p,p}$ and $K_{p,p}$. Then $G$ has $4p=n$ vertices, and every vertex has degree $n-1-p=k+1$. Observe that $G$ contains a matching $M$ of $p$ edges (between the two $K_{p,p}$ subgraphs in $\overline{G}$), such that every vertex is adjacent to at least one endpoint of every edge in $M$. Contracting each edge in $M$ we obtain a $K_{3p}$-minor in $G$, which has minimum degree $k+1$. Thus $G\not\in\DD{k}$.
\end{proof}

%Note that the graph in \propref{TightRegularGraphs} is the lexicographic product $C_4\cdot K_p$.

\thmref{RegularGraphs} can be rewritten in terms of complements.

\begin{corollary}
If $G$ is an $r$-regular graph on $n\geq4r+1$ vertices, then $\overline{G}\in\DD{n-r-2}$.
\end{corollary}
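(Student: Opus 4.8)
The plan is to obtain this as a direct reformulation of \thmref{RegularGraphs} via complementation. First I would note that if $G$ is $r$-regular on $n$ vertices, then its complement $\overline{G}$ is $(n-1-r)$-regular on $n$ vertices. Setting $k:=n-r-2$, we have $k+1=n-1-r$, so $\overline{G}$ is precisely a $(k+1)$-regular graph on $n$ vertices. It then remains only to verify that the hypothesis $n\geq 4r+1$ is exactly the hypothesis $n<\frac{4}{3}(k+2)$ needed to invoke \thmref{RegularGraphs}.

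For this, observe that $k+2=n-r$, so $\frac{4}{3}(k+2)=\frac{4}{3}(n-r)$. The inequality $n<\frac{4}{3}(n-r)$ rearranges to $3n<4n-4r$, i.e.\ $4r<n$; and since $n$ and $r$ are integers this is equivalent to $n\geq 4r+1$. Hence the assumption of the corollary is equivalent to $n<\frac{4}{3}(k+2)$, and \thmref{RegularGraphs} applied to $\overline{G}$ (with parameter $k$) gives $\overline{G}\in\DD{k}=\DD{n-r-2}$, as required.

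There is no real obstacle here — the only point requiring a moment's care is the boundary arithmetic, namely confirming that the strict inequality $n<\frac{4}{3}(k+2)$ corresponds, for integer $n$ and $r$, to the stated bound $n\geq 4r+1$ rather than $n\geq 4r$ (the case $n=4r$ would give $n=\frac{4}{3}(k+2)$, which is excluded, consistently with \propref{TightRegularGraphs}). Once this is checked, the proof is a one-line substitution.
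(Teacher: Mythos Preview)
Your proposal is correct and is exactly the intended derivation: the paper presents this corollary without proof as a direct rewriting of \thmref{RegularGraphs} via complementation, and your substitution $k:=n-r-2$ together with the arithmetic check $n<\tfrac{4}{3}(k+2)\iff n\geq 4r+1$ is precisely what is needed.
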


%%%%%%%%%%%%%%%%%%%%%%%%%%%%%%%%%%%%%%%%%%%%%%%%%%%%%%%%%%%%%%%%%%%%%%%%%%%%%%%
\section{A Construction}
\seclabel{Construction}
%%%%%%%%%%%%%%%%%%%%%%%%%%%%%%%%%%%%%%%%%%%%%%%%%%%%%%%%%%%%%%%%%%%%%%%%%%%%%%%

We now describe how a graph in \DD{k+1} can be constructed from a graph in \DD{k}. Let $G^+$ be the graph obtained from a graph $G$ by adding one new vertex that is adjacent to each vertex of minimum degree in $G$. If $G\in\DD{k}$ then the vertices of minimum degree are the low-degree vertices.

\begin{lemma}
\lemlabel{Construction}
If $G\in\DD{k}$ then $G^+\in\DD{k+1}$.
\end{lemma}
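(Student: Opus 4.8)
The plan is to verify the four conditions (D1)--(D4) of \lemref{BasicDegree} for $G^+$ with parameter $k+1$. Let $v_0$ be the new vertex, and let $L$ be the set of low-degree (degree-$(k+1)$) vertices of $G$, so $v_0$ is adjacent exactly to $L$ and $\deg_{G^+}(v_0)=|L|$. By \propref{ManyLows}, $|L|\geq k+2$, so $\deg_{G^+}(v_0)\geq k+2$. Each vertex of $L$ gains one neighbour and so has degree $k+2$ in $G^+$; each high-degree vertex of $G$ (degree $\geq k+2$) keeps its degree. Hence $\delta(G^+)=k+2$, giving (D1). Connectivity (D3) is immediate since $G$ is connected and $v_0$ has a neighbour. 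For (D4): the vertices of degree $\geq k+3$ in $G^+$ are exactly the old high-degree vertices of degree $\geq k+3$ in $G$, which by \lemref{BasicDegree}(D4) applied to $G$ form an independent set; and $v_0$, even if it has degree $\geq k+3$, is only adjacent to vertices of $L$, all of which have degree $k+2<k+3$. So no two vertices of degree $\geq k+3$ are adjacent in $G^+$, giving (D4).

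The substantive part is (D2): every proper contraction minor $H=G^+/S$ (with $S\subseteq E(G^+)$, $S\neq\emptyset$) has minimum degree at most $k+1$. I would split on whether $S$ contains an edge incident to $v_0$. \emph{Case 1: $S$ uses no edge at $v_0$.} Then $S\subseteq E(G)$ and $H$ is obtained from the contraction minor $G/S$ of $G$ by re-attaching $v_0$ to the branch sets that met $L$. By \lemref{BasicDegree}(D2) for $G$, the graph $G/S$ has a vertex $u$ of degree $\leq k$; if no branch set containing a vertex of $L$ got merged in a way that raises $u$'s degree — more carefully, I want a low-degree vertex of $G/S$ whose degree is not increased by the addition of $v_0$. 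The cleanest route: use \propref{CommonNeighbour} or the degree bookkeeping of \obsref{DegreeChange} to locate, among the vertices realising $\delta(G/S)\leq k$, one that is not adjacent to $v_0$ in $H$; its degree in $H$ is still $\leq k\leq k+1$. One must argue such a vertex exists — this is where a short counting argument is needed, perhaps by observing that contracting an edge of $G$ can only decrease degrees of vertices outside the branch set, so the vertex $u$ of minimum degree in $G/S$ has $\deg_{G/S}(u)\leq k$, and if $u\in L$-branch then... — handling the interaction with $v_0$ is the first delicate point.

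\emph{Case 2: $S$ contains an edge $v_0w$ with $w\in L$.} Contract that edge first: $G^+/v_0w$ has the merged vertex $x$ with $\deg(x)=\deg_{G^+}(v_0)+\deg_{G^+}(w)-p-2$ where $p$ is the number of common neighbours of $v_0$ and $w$ (\obsref{DegreeChange}); since every neighbour of $v_0$ lies in $L$, these $p$ common neighbours are low-degree vertices of $G$, and each drops to degree $k+1$. Here the key claim is that $G^+/v_0w$ is isomorphic to a graph obtainable from $G$ by contractions, or at least that it already has a vertex of degree $\leq k+1$ — indeed the common neighbours of $v_0w$, which are the vertices of $L$ adjacent to $w$ in $G$, now have degree $k+1$, and $L$ is large while $w$ has degree $k+1$ in $G$, so $w$ has $k+1$ neighbours in $G$, among which the low-degree ones are common neighbours of $v_0w$; if $w$ has at least one low-degree neighbour in $G$ we are done immediately, and otherwise all neighbours of $w$ in $G$ are high-degree, a situation already constrained by the argument in \propref{CommonNeighbour}. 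Then any further contractions in $S$ only help.

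I expect the main obstacle to be Case~1: after contracting edges of $G$, the new vertex $v_0$ (attached to all branch sets meeting $L$) can, in principle, be adjacent to every low-degree vertex of $G/S$ and thereby push all their degrees up to $k+1$, so one must show a minimum-degree vertex of $G/S$ survives with degree $\leq k+1$ in $H$ anyway. The way to control this is to track degrees via \obsref{DegreeChange} and use that each contraction in $G$ destroys at least as much degree (at the merged vertex, via common neighbours, which by (D4) include a low-degree one by \propref{CommonNeighbour}) as $v_0$ could add back — equivalently, pick the edge $vw\in S\cap E(G)$ and its low-degree common neighbour $y$ guaranteed by \propref{CommonNeighbour}; then $\deg_{G/S}(y)\leq k-1$, and even after adding $v_0$ we get $\deg_H(y)\leq k\leq k+1$, provided $y$ itself was not contracted — and if it was, repeat with the branch set. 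Making this selection robust against $y$ lying in a contracted branch set is the one place that needs care, and I would handle it by choosing, among all edges of $S\cap E(G)$, one whose low-degree common neighbour is not itself an endpoint of an edge in $S$, or failing that, iterating the bound down the branch set.
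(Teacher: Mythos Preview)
Your verifications of (D1), (D3), (D4) are fine. The trouble is in (D2), and in both cases you are working much harder than necessary and leaving real gaps.

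In Case~1 the argument is a one-liner that you talked yourself out of: since $S\subseteq E(G)$ and $S\neq\emptyset$, the graph $G/S$ is a proper contraction minor of $G$, so by (D2) for $G$ there is a vertex $u$ of $G/S$ with $\deg_{G/S}(u)\leq k$. In $H=G^+/S$ this vertex $u$ can gain at most one new neighbour, namely $v_0$, so $\deg_H(u)\leq k+1$. That is all: you do not need $u$ to be non-adjacent to $v_0$, and you do not need \propref{CommonNeighbour}. (Your claim $\deg_{G/S}(y)\leq k-1$ is also off; contracting a single edge $vw$ drops the common neighbour $y$ only to degree $k$, and nothing prevents $y$ from being absorbed into a branch set under the remaining contractions in $S$.) In Case~2 the sentence ``any further contractions in $S$ only help'' is the gap: the degree-$(k+1)$ witnesses you produce after contracting $v_0w$ may themselves be endpoints of further edges in $S$ and get merged into branch sets of higher degree, so you have not shown $\delta(G^+/S)\leq k+1$.

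The paper sidesteps all of this with a single device that you should steal. Rather than splitting on cases, take an arbitrary proper minor $H$ of $G^+$ (not just a contraction minor) and let $B$ be the branch set containing $v_0$ (if $v_0$ was deleted, $H$ is already a minor of $G$). Then $H-B$ is a minor of $G$. If $H-B$ is a \emph{proper} minor of $G$, then since $G\in\DD{k}$ there is a vertex $X$ of $H-B$ with degree at most $k$, and re-inserting the single vertex $B$ raises its degree by at most one, giving $\deg_H(X)\leq k+1$. If $H-B$ equals $G$ itself, then $B=\{v_0\}$ and nothing else was contracted or deleted, so $H=G^+$, contradicting properness. This argument is uniform across your two cases, requires no common-neighbour bookkeeping, and shows directly that $G^+\in\DD{k+1}$ without going through (D1)--(D4).
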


\begin{proof}
Let $v$ be the vertex of $G^+-G$. Every low-degree vertex in $G$ has degree $k+1$, and thus has degree $k+2$ in $G^+$. Every high-degree vertex in $G$ has degree at least $k+2$, which is unchanged in $G^+$. By \propref{ManyLows}, $G$ has at least $k+2$ low-degree vertices. Thus $v$ has degree at least $k+2$ in $G^+$. Thus $G^+$ has minimum degree $k+2$. Suppose on the contrary that $G\in\DD{k}$ but $G^+\not\in\DD{k+1}$. Thus there is a proper minor $H$ of $G^+$ with minimum degree at least $k+2$. If $v$ is not in a branch set of $H$, then $H$ is a minor of $G$, implying $H$ has minimum degree at most $k+1$, which is a contradiction. Now assume that $v$ is in some branch set $B$ of $H$. (Think of $B$ simultaneously as a vertex  of $H$ and as a set of vertices of $G^+$.)\ Now $H-B$ is a minor of $G$. If $H-B$ is $G$, then $B=\{v\}$ and $H$ is not a proper minor of $G^+$. Thus $H-B$ is a proper minor of $G$. Since $G\in\DD{k}$, $H-B$ has a vertex $X$ of degree at most $k$. Thus $X$ has degree at most $k+1$ in $H$, which is a contradiction. 
\end{proof}

We also have a converse result.

\begin{lemma}
\lemlabel{AddVertex}
Let $S$ be a set of vertices in a graph $G\in\D{k}$. Let $G'$ be the graph obtained from $G$ by adding one new vertex $v$ adjacent to every vertex in $S$. If $G'\in\DD{k+1}$ then $S$ is the set of low-degree vertices in $G$. 
\end{lemma}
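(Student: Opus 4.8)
The plan is to prove the contrapositive in two halves: if $S$ is \emph{not} the set of low-degree vertices of $G$, then $G'\notin\DD{k+1}$. There are two ways $S$ can fail to be exactly the set of degree-$k$ vertices (note $\delta(G)\le k$ since $G\in\D{k}$; in fact we may as well assume $\delta(G)=k$, since if $\delta(G)<k$ then $v$ has too-small degree or we argue directly): either (a) $S$ omits some low-degree vertex $u$, or (b) $S$ contains some high-degree vertex $u$ (degree $\ge k+1$ in $G$), or more generally $S$ is not contained in the low-degree set. First I would dispose of the easy structural necessary conditions: by \lemref{BasicDegree}, if $G'\in\DD{k+1}$ then $\delta(G')=k+2$, $G'$ is connected, and no two vertices of degree $\ge k+3$ are adjacent. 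From $\delta(G')=k+2$ applied to the vertices of $S$, every vertex of $S$ has degree $\ge k+1$ in $G$; and applied to $v$, we get $|S|\ge k+2$. Also every vertex of $G$ \emph{not} in $S$ must already have degree $\ge k+2$ in $G$ — but then by \lemref{BasicDegree}(D4)-type reasoning inside $G'$, such a vertex has degree $\ge k+2$, fine, but combined with $G\in\D{k}$ we'll derive a contradiction.

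The core case is (a): suppose some vertex $u$ of degree $k$ in $G$ (a true low-degree vertex, which exists as $\delta(G)\le k$; if $\delta(G)<k$ the degree count for $u$ in $G'$ is at most $k+1<k+2$ unless $u\in S$, handled similarly) is not in $S$. Then $\deg_{G'}(u)=\deg_G(u)=k$ if $\delta(G)=k$... wait, that already violates $\delta(G')=k+2$, so $G'\notin\DD{k+1}$ immediately by (D1). Hence every low-degree vertex of $G$ must lie in $S$: otherwise $G'$ fails condition (D1) outright. So $S$ must \emph{contain} the low-degree set. The remaining case is (b): $S$ strictly contains the low-degree set, i.e. $S$ contains some vertex $u$ with $\deg_G(u)\ge k+1$. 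Then $\deg_{G'}(u)\ge k+2$. Now I want a proper minor of $G'$ with minimum degree $\ge k+2$, contradicting \lemref{Basic}/\lemref{BasicDegree}. The natural candidate: delete the edge $uv$. In $G'-uv$, vertex $v$ loses a neighbour, so $\deg(v)=|S|-1$; vertex $u$ has degree $\deg_G(u)\ge k+1$; all others unchanged. This has minimum degree $\ge k+1$ but we need $\ge k+2$, so a plain edge-deletion is not quite enough — we need $|S|\ge k+3$, or we need to also use that $G\in\D{k}$ more cleverly.

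The main obstacle, then, is case (b) when $|S|=k+2$ exactly and $u$ is a high-degree vertex of $G$ in $S$: here $G'-uv$ has the new vertex $v$ of degree $k+1$, which is too small. To handle this I would instead contract rather than delete: contract the edge $uv$ in $G'$, or contract $uv$ together with suitable additional edges, to form a proper contraction minor. Contracting $uv$ merges $v$ (neighbourhood $S$) with $u$ (neighbourhood $N_G(u)$, of size $\ge k+1$); the merged vertex $x$ has degree $|S\cup N_G(u)| - |\{\text{common nbrs}\}| - $ adjustments, and every common neighbour of $u$ and $v$ drops by one. Common neighbours of $uv$ in $G'$ are exactly $S\cap N_G(u)$. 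The key realisation is that $G'/uv$ restricted appropriately is essentially $G$ with $u$'s neighbourhood enlarged by $S\setminus N_G[u]$ — i.e. $G'/uv$ contains $G$ as a spanning subgraph when $u\in S$, hence $\delta(G'/uv)\ge\delta(G)$, which is $\le k$, not helpful directly. So the right move is the reverse: observe $G$ itself is a proper minor of $G'$ (delete $v$), and use instead that we can find in $G'$ a proper minor that is $G$ \emph{plus an extra edge or a merged pair} forcing minimum degree up. Concretely: if $u\in S$ has degree $\ge k+1$ in $G$, pick any low-degree vertex $w\ne u$ of $G$ (exists, $\ge k+2$ of them), note $w\in S$ so $w\sim v$ in $G'$; then contract the path $u\,v\,w$ (i.e.\ contract both $uv$ and $vw$) to a single vertex. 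The resulting graph is a proper minor of $G'$; it is obtained from $G$ by identifying $u$ and $w$ and adding edges from the new vertex to all of $S$. One then checks via \obsref{DegreeChange} that this identified vertex has degree $\ge k+2$ (using $\deg_G(u)\ge k+1$, $\deg_G(w)= k+1$ — correcting to $\delta(G)$ if smaller — and that few vertices are common neighbours) while every other vertex keeps its $G$-degree unless it was a common neighbour, and common neighbours were already high-degree by (D4)-analysis. Getting this degree bookkeeping to close — in particular ruling out that too many vertices lose degree simultaneously — is the technical heart, and it will mirror closely the argument in \propref{CommonNeighbour}; I would model the write-up on that proof.
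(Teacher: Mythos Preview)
Your confusion stems from a typo in the statement: the hypothesis should read $G\in\DD{k}$, not $G\in\D{k}$. Taken literally with $G\in\D{k}$ the lemma is vacuous: any vertex of $G$ of degree at most $k$ has degree at most $k+1$ in $G'$, so $\delta(G')\le k+1$ and $G'\notin\DD{k+1}$; the implication holds trivially. The paper's own proof silently assumes $G\in\DD{k}$ (it uses that low-degree vertices of $G$ have degree exactly $k+1$, and it invokes \propref{ManyLows}, which is stated for graphs in $\DD{k}$). Once you work with $G\in\DD{k}$ your case~(a) is exactly right and matches the paper.

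For case~(b) you are making it far harder than necessary, and the tool you need is one you already wrote down. You noted that (D4) for $G'$ forbids adjacent vertices both of degree $\ge k+3$. Now use it: if $y\in S$ has $\deg_G(y)\ge k+2$ then $\deg_{G'}(y)\ge k+3$; and since $S$ contains all low-degree vertices of $G$ (at least $k+2$ of them by \propref{ManyLows}) together with $y$, we get $|S|\ge k+3$, hence $\deg_{G'}(v)=|S|\ge k+3$. So $v$ and $y$ are adjacent vertices of $G'$ both of degree $\ge k+3$, contradicting (D4). That is the paper's entire argument for this case. Your edge-deletion idea $G'-vy$ is in fact equivalent to this and also works once you know $|S|\ge k+3$; the perceived ``obstacle'' at $|S|=k+2$ never arises, precisely because of \propref{ManyLows}. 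The elaborate contraction of the path $uvw$ and the appeal to \obsref{DegreeChange} are therefore unnecessary, and as you suspected, the degree bookkeeping there does not close cleanly without further work.
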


\begin{proof}
Suppose that $G'\in\DD{k+1}$. If some low-degree vertex $x$ of $G$ is not in $S$, then $\deg_{G'}(x)=k+1$ and $G'\not\in\DD{k+1}$. Now assume that every low-degree vertex of $G$ is in $S$. Suppose on the contrary that some high-degree vertex $y$ of $G$ is in $S$.
Thus $\deg_G(y)\geq k+2$, implying $\deg_{G'}(y)\geq k+3$. By \propref{ManyLows} there are at least $k+2$ low-degree vertices of $G$, all of which are adjacent to $v$ in $G'$. Thus $\deg_{G'}(v)\geq k+3$. Hence $v$ and $y$ are adjacent vertices of degree at least $k+3$ in $G'$. Therefore $G'\not\in\DD{k+1}$ by \lemref{BasicDegree}. This contradiction proves that no high-degree vertex of $G$ is in $S$. Therefore $S$ is the set of low-degree degree vertices.
\end{proof}

%\BOX{Can we characterise the graphs $G$ for which $G^+\in\DD{k}$? For example, $K_{2,2,2}=W_4^+$, where $W_n$ is the $n$-spoke wheel.}

Observe that \twolemref{Construction}{AddVertex} together prove \thmref{AddVertex}. \lemref{Construction} generalises as follows. For a non-negative integer $p$, let $G^{+p}$ be the graph obtained from a graph $G$ by adding $p$ independent vertices, each adjacent to every vertex in $G$. 

\begin{lemma}
\lemlabel{GeneralConstruction}
Let $G$ be a $(k+1)$-regular $n$-vertex graph in \DD{k}.
Then $G^{+p}\in\DD{k+p}$ whenever $0\leq p\leq n-k-1$.
\end{lemma}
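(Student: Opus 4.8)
The plan is to mimic the proof of \lemref{Construction} but iterate the argument $p$ times, keeping track of which vertices are low-degree at each stage. First I would verify the easy structural facts. Since $G$ is $(k+1)$-regular, every vertex of $G$ is low-degree, so $G^{+p}$ is obtained from $G$ by adding $p$ independent vertices each adjacent to \emph{all} $n$ vertices of $G$. In $G^{+p}$, each original vertex of $G$ has degree $(k+1)+p$, and each new vertex has degree $n$. The hypothesis $p\leq n-k-1$ is exactly the statement $n\geq k+p+1$, which guarantees that the new vertices have degree at least $k+p+1$; combined with the original vertices having degree exactly $k+p+1$, this shows $\delta(G^{+p})=k+p+1$, establishing condition (D1) of \lemref{BasicDegree}. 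Condition (D3) is immediate since $G$ is connected and the new vertices are adjacent to everything. For (D4): the high-degree vertices of $G^{+p}$ are precisely the new vertices with $n>k+p+1$, and these are pairwise non-adjacent (the new set is independent), so no two high-degree vertices are adjacent.

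The substance is condition (D2): every proper contraction minor of $G^{+p}$ has minimum degree at most $k+p$. I would argue by induction on $p$, the base case $p=0$ being the hypothesis $G\in\DD{k}$. For the inductive step, write $G^{+p}=(G^{+(p-1)})^{+}$ — that is, observe that adding one more universal-to-$G$ vertex to $G^{+(p-1)}$ is the same as forming $(G^{+(p-1)})^{+}$, \emph{provided} that the vertices of minimum degree in $G^{+(p-1)}$ are exactly the $n$ original vertices of $G$. This is where the regularity of $G$ and the bound on $p$ are used: in $G^{+(p-1)}$ the original vertices have degree $k+p$ and the $p-1$ new vertices have degree $n\geq k+p+1$ (using $p-1\leq p\leq n-k-1$), so indeed the minimum-degree set of $G^{+(p-1)}$ is $V(G)$, and the new vertex added to form $G^{+p}$ is joined to exactly that set. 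By the induction hypothesis $G^{+(p-1)}\in\DD{k+p-1}$, and applying \lemref{Construction} to it gives $(G^{+(p-1)})^{+}=G^{+p}\in\DD{k+p}$, as required. (Alternatively, one can run the contraction-minor argument of \lemref{Construction} directly: given a proper minor $H$ of $G^{+p}$ with $\delta(H)\geq k+p+1$, peel off the branch sets containing new vertices one at a time; each removal leaves a minor of a graph of the form $G^{+q}$ and drops every degree by at most one, and after removing all of them one is left with a proper minor of $G$, contradicting $G\in\DD{k}$ via the degree bookkeeping — but the inductive packaging via \lemref{Construction} is cleaner.)

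The main obstacle is the bookkeeping that makes the iteration legitimate: one must be sure that at every intermediate stage $G^{+q}$ ($q<p$) the set of minimum-degree vertices is still exactly $V(G)$, so that the single-vertex construction of \lemref{Construction} applies verbatim. This is precisely what can fail if $p$ is too large relative to $n$ — once $p$ reaches $n-k$ the new vertices would have degree $n\leq k+p$ and become low-degree themselves, and then \lemref{Construction} no longer describes the graph $G^{+(q+1)}$ as $(G^{+q})^{+}$. Under the stated bound $p\leq n-k-1$ this never happens, so the induction goes through; I would state this monotonicity of the low-degree set as the one lemma-like observation worth spelling out, and the rest follows mechanically from \lemref{Construction} and \propref{ManyLows}.
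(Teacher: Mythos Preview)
Your proposal is correct and follows essentially the same route as the paper: induct on $p$, observe that under the bound $p\leq n-k-1$ the minimum-degree vertices of $G^{+(p-1)}$ are exactly $V(G)$ so that $G^{+p}=(G^{+(p-1)})^{+}$, and then invoke \lemref{Construction}. Your separate verification of (D1), (D3), (D4) is harmless but redundant, since \lemref{Construction} already delivers full membership in $\DD{k+p}$.
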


\begin{proof}
Every vertex of $G$ has degree $k+1+i$ in $G^{+i}$.
Every vertex of $G^{+i}-G$ has degree $n$ in $G^{+i}$.
Thus, if $n>k+1+i$ then the vertices of minimum degree in $G^{+i}$ are exactly the vertices of $G$. Thus $G^{+i}=(G^{+(i-1)})^+$ whenever $1\leq i\leq n-k-1$. 
By induction on $i$, applying \lemref{Construction} at each step, we conclude that $G^{+i}\in\DD{k+i}$ and $G^{+p}\in\DD{k+p}$.
\end{proof}

\thmref{RegularGraphs} and \lemref{GeneralConstruction} imply:

\begin{corollary}
\corlabel{RegularGraphs}
Let $G$ be a $(k+1)$-regular graph with $n<\frac{4}{3}(k+2)$ vertices.
Then $G^{+p}\in\DD{k+p}$ whenever $0\leq p\leq n-k-1$.
\end{corollary}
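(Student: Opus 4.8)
The plan is to obtain this purely by composing the two results cited in the sentence preceding the statement, with no new argument required. First I would apply \thmref{RegularGraphs} to $G$: since $G$ is $(k+1)$-regular on $n<\frac{4}{3}(k+2)$ vertices, that theorem gives $G\in\DD{k}$ directly. This already disposes of the case $p=0$.

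Next, note that $G$ is now a $(k+1)$-regular $n$-vertex graph that belongs to \DD{k}, so the hypotheses of \lemref{GeneralConstruction} are satisfied verbatim. Applying that lemma yields $G^{+p}\in\DD{k+p}$ for every $p$ with $0\leq p\leq n-k-1$, which is exactly the assertion of the corollary. So the write-up is essentially two sentences: invoke \thmref{RegularGraphs}, then invoke \lemref{GeneralConstruction}.

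I do not expect any genuine obstacle here; the only points worth a moment's care are bookkeeping. One should check that \lemref{GeneralConstruction} requires \emph{both} $(k+1)$-regularity and membership in \DD{k}: the regularity is part of the hypothesis of the present corollary, while the membership is precisely what \thmref{RegularGraphs} provides, so both inputs are in hand. It is also worth observing in passing that the index range $0\leq p\leq n-k-1$ is non-degenerate, since any $(k+1)$-regular graph has $n\geq k+2$, and indeed $n<\frac{4}{3}(k+2)$ forces $n\in\{k+2,\dots\}$ in a bounded window; but this is not needed for the proof, only for interpreting it.
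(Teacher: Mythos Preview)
Your proposal is correct and matches the paper's approach exactly: the paper simply states that \thmref{RegularGraphs} and \lemref{GeneralConstruction} together imply the corollary, with no further argument.
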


\corref{RegularGraphs} implies:
 
\begin{lemma}
\lemlabel{Vida}
Let $L(G)$ denote the set of minimum degree vertices in a graph $G$. 
Let $p:=|G-L(G)|$. Suppose that \begin{itemize}
\item the minimum degree of $G$ is $k+1$, and
\item $|L(G)|<\frac{4}{3}(k+2-p)$, and
\item $V(G)-L(G)$ is an independent set of $G$, and 
\item every vertex in $V(G)-L(G)$ dominates $L(G)$.
\end{itemize}
Then $G\in\DD{k}$.
\end{lemma}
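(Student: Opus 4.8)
The plan is to reduce \lemref{Vida} to \corref{RegularGraphs} by peeling off the $p:=|V(G)-L(G)|$ high-degree vertices. Write $L:=L(G)$ and let $H:=G[L]$ be the subgraph induced by the minimum-degree vertices. Since $V(G)-L$ is independent and every one of its vertices is adjacent to all of $L$, each vertex of $L$ has exactly $p$ neighbours in $V(G)-L$ and hence exactly $(k+1)-p$ neighbours inside $L$. Thus $H$ is a $(k+1-p)$-regular graph on $|L|$ vertices, and $G$ is exactly $H^{+p}$, the graph obtained from $H$ by adding $p$ independent vertices each adjacent to every vertex of $H$.

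The case $p=0$ is immediate: then $G=H$ is $(k+1)$-regular with $|V(G)|=|L|<\frac{4}{3}(k+2)$ vertices, so $G\in\DD{k}$ by \thmref{RegularGraphs}. So assume $p\geq1$ and fix a vertex $u\in V(G)-L$. As $V(G)-L$ is independent and $u$ dominates $L$, we have $\deg_G(u)=|L|$; and since $u$ is not of minimum degree, $|L|=\deg_G(u)>k+1$, i.e.\ $|L|\geq k+2$. Combined with the hypothesis $|L|<\frac{4}{3}(k+2-p)$ this forces $4p<k+2$, so in particular $k-p\geq1$ and the parameter $k-p$ used below is legitimate.

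To conclude, I would apply \corref{RegularGraphs} to the graph $H$ with the corollary's ``$k$'' taken to be $k-p$ and its ``$p$'' taken to be $p$. Its three hypotheses are met: $H$ is $((k-p)+1)$-regular; $|V(H)|=|L|<\frac{4}{3}((k-p)+2)$ is exactly the assumed bound; and the side condition $0\leq p\leq|V(H)|-(k-p)-1$ is equivalent to $|L|\geq k+1$, which holds since $|L|\geq k+2$. The corollary then gives $G=H^{+p}\in\DD{(k-p)+p}=\DD{k}$, as desired.

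The only step that needs any care is checking the side condition $p\leq|V(H)|-(k-p)-1$ of \corref{RegularGraphs}, together with the implicit requirement $k-p\geq0$; both reduce to the inequality $|L|\geq k+2$, and establishing that inequality is precisely where the independence of $V(G)-L$ and the domination hypothesis are genuinely used. Everything else is a routine substitution into \corref{RegularGraphs} and \thmref{RegularGraphs}.
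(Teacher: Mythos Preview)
Your proof is correct and follows essentially the same route as the paper: you let $H=G[L(G)]$, observe it is $(k+1-p)$-regular with fewer than $\frac{4}{3}((k-p)+2)$ vertices, recognise $G=H^{+p}$, and apply \corref{RegularGraphs} with parameter $k-p$. Your treatment is in fact slightly more careful than the paper's, explicitly separating the case $p=0$ and verifying $k-p\geq1$ via the inequality $4p<k+2$.
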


\begin{proof}
Let $X$ be the subgraph of $G$ induced by the vertices of degree $k+1$.
Thus $X$ is $(r+1)$-regular, where $r=k-p$.
Say $X$ has $n$ vertices. By assumption, 
$n<\frac{4}{3}(k+2-p)=\frac{4}{3}(r+2)$.
The high-degree vertices of $G$ have degree $n$, and the low-degree vertices of $G$ have degree $r+1+p$. Thus $n>r+1+p$. That is, $p<n-r-1$.
Thus, by \corref{RegularGraphs}, we have $G=X^{+p}\in\DD{r+p}=\DD{k}$.
\end{proof}

%Note that for complete graphs,
%$$K^+_{\underbrace{1,\dots,1}_p}=K_{\underbrace{1,\dots,1}_{p+1}}.$$
%and whenever $a<b$, 
%$$K^+_{a,\underbrace{b,\dots,b}_p}=K_{a+1,\underbrace{b,\dots,b}_p}.$$

%%%%%%%%%%%%%%%%%%%%%%%%%%%%%%%%%%%%%%%%%%%%%%%%%%%%%%%%%%%%%%%%%%%%
\section{Block Structure}
\seclabel{BlockStructure}
%%%%%%%%%%%%%%%%%%%%%%%%%%%%%%%%%%%%%%%%%%%%%%%%%%%%%%%%%%%%%%%%%%%%

%\BOX{Is something like the following true: Let $G$ be a graph in \DD{k} with some extra properties. Let $G_1$ be obtained from $G$ by deleting two independent edges whose endpoints have degree $k+1$.  Let $G_2$ be obtained from $(k+1)/4$ copies of $G_1$ by adding one new vertex adjacent to every vertex of degree $k$. Then $G_2$ is in \DD{k} (based on the extra properties of $G$). Perhaps this is true if $G$ is small enough?}

In this section we show that graphs in \DD{k} can have an arbitrary block decomposition tree\footnote{Let $G$ be a connected graph. Let $B$ denote the set of blocks of $G$ (that is, cut-edges and maximal $2$-connected components). Let $C$ denote the set of cut-vertices of $G$. The \emph{block decomposition tree of $G$} is the tree $T$ where $V(T)=B \cup C$, and $bc \in E(T)$ whenever the block $b$ contains $c$. A \emph{block decomposition tree} is a tree that is isomorphic to a block decomposition tree of some graph. The \emph{bipartition} of a tree $T$ is the partition of $V(T)$ obtained from a proper $2$-colouring of $T$. Since every cut-vertex is contained in at least two blocks, every leaf of a block decomposition tree $T$ belongs to the same bipartition class of $T$. Conversely, if a tree $T$ admits a bipartition of its vertices such that all leaves lie in the same bipartition class, then $T$ is a block decomposition tree.}. \twothmref{main}{main2} are the main results.
Note that every graph in \DD{k} has no cut-edge (except $K_2$), since a cut-edge can be contracted without decreasing the minimum degree.

%Thus $E_v$ is the set of edges at distance $0$ from $v$.

A \emph{low-high tree} is a tree $T$ that admits a bipartition  $V(T)=\vl \cup \vh$, such that every vertex in \vl\ has degree at most $2$, and every vertex in \vh\ has degree at least $2$. Vertices in $\vl$ are called \emph{low}, and vertices in $\vh$ are called \emph{high}. Since every leaf in a low-high tree is low, every low-high tree is a block decomposition tree. 

In the following discussion, let $T$ be a low-high tree. 
Let $L$ be the set of leaves in $T$.
Let $r$ be an arbitrary high vertex of $T$, called the \emph{root}. 
For each edge $vw \in E(T)$, let $\dist(r,vw):=\min\{\dist(r,v),\dist(r,w)\}$. 
Let $B$ be the set of edges of $T$ at even distance from $r$.
Call these edges \emph{blue}.
Similarly let $R:=E(T)-B$ be the set of \emph{red} edges in $T$.
Since $r$ is high and each leaf is low, each leaf is at odd distance from $r$. Thus each edge incident with a leaf is blue.

\begin{lemma}
\lemlabel{p1}
The number of blue edges $|B|$ and the number of red edges $|R|$ do not depend on the choice of $r$.
\end{lemma}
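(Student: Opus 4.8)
The plan is to show that changing the root from one high vertex $r$ to an adjacent-in-the-high-sense high vertex $r'$ leaves both colour classes unchanged, and then to propagate this along the tree. Since the high vertices form (together with the low vertices of degree $2$ sitting between them) a connected structure, it suffices to handle the case where $r$ and $r'$ are two high vertices whose connecting path in $T$ passes through exactly one low vertex, or are joined by a single edge; the general case follows by a sequence of such moves, as any two high vertices are connected by a path alternating suitably and we can walk the root along it one high vertex at a time.

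First I would fix notation: write $B_r$ and $R_r$ for the blue and red edge sets when the root is $r$, recalling that an edge $vw$ is blue iff $\dist(r,vw)=\min\{\dist(r,v),\dist(r,w)\}$ is even. The key observation is that in a tree, for any edge $e=vw$ and any two vertices $r,r'$, the quantities $\dist(r,vw)$ and $\dist(r',vw)$ differ by an amount determined by where $r$ and $r'$ sit relative to the unique cut that $e$ induces. Concretely, deleting $e$ splits $T$ into components $T_v \ni v$ and $T_w \ni w$; then $\dist(r,vw)=\dist(r,v)$ if $r\in T_v$ and $=\dist(r,w)$ if $r\in T_w$. So the parity of $\dist(r,vw)$ flips, as we move the root from $r$ to $r'$, exactly for those edges $e$ such that $\dist(r,\cdot)$ and $\dist(r',\cdot)$ have, on the side of $e$ containing neither... — more cleanly: the colour of $e$ changes iff the parity of $\dist(r, x_e)$ differs from the parity of $\dist(r', x_e)$, where $x_e$ is the endpoint of $e$ on the $r$–$r'$ side. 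I would then show that moving the root between two high vertices at tree-distance $2$ (through a degree-$2$ low vertex) changes $\dist(r,\cdot)$ by $\pm 2$ on every vertex, hence changes no parities, hence changes no colours; the point is that both candidate roots are \emph{high}, and any shortest path between high vertices has even length because the bipartition $V(T)=\vl\cup\vh$ forces high–high paths to alternate and the internal vertices to be low of degree $2$, so such a path has even length.

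The step I expect to be the real content is verifying that any two high vertices of a low-high tree are joined by a path of even length — equivalently, that $T$ is "bipartite with all high vertices on one side" in the relevant sense. This is where the structural hypotheses (every low vertex has degree $\le 2$, every high vertex has degree $\ge 2$, and $\vl,\vh$ form the bipartition) must be used: along any path in $T$, high and low vertices need not strictly alternate, but two consecutive \emph{high} vertices cannot be adjacent (they lie in the same bipartition class), and any maximal run of low vertices between two highs consists of degree-$2$ vertices, forcing the run to have a definite parity. Once even-length high–high paths are established, the parity-shift argument above gives the result for one root-move, and induction along the tree — picking $r'$ to be a high neighbour-in-the-contracted-sense of $r$ and iterating — completes the proof. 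Alternatively, and perhaps more cleanly, I would give a direct closed formula: colour $e=vw$ by the parity of $\dist(r,x_e)$ where $x_e\in\{v,w\}$ is separated from $r$ by $e$; show this parity equals the parity of (number of high vertices strictly between $r$ and $x_e$ on the path) plus something root-independent, using that consecutive highs are at even distance — then $|B|$ is a sum of local, root-independent quantities.
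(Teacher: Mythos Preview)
Your central claim---that moving the root from $r$ to a high vertex $r'$ at distance $2$ changes no edge colours---is false, and this is the gap. You argue that vertex-distance parities are preserved (true, since $r,r'$ lie in the same bipartition class $V_h$), and then infer that the parity of $\dist(r,vw)=\min\{\dist(r,v),\dist(r,w)\}$ is preserved. But the minimum can switch endpoints. Concretely, let $x$ be the (necessarily low, degree-$2$) common neighbour of $r$ and $r'$. For the edge $rx$: with root $r$ the minimum is $\dist(r,r)=0$ (blue), while with root $r'$ the minimum is $\dist(r',x)=1$ (red). Symmetrically $r'x$ goes from red to blue. Your $x_e$ machinery---``the endpoint of $e$ on the $r$--$r'$ side''---is simply undefined for edges lying on the $r$--$r'$ path, and those are exactly the edges whose colours flip. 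So the colour classes $B$ and $B'$ genuinely differ.

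The fix, which is what the paper does, is to observe that this is the \emph{only} difference: for every edge off the $r$--$r'$ path your parity argument is valid, while on the path the two edges $rx$ and $r'x$ swap colours, so $|B|=|B'|$ and $|R|=|R'|$. Incidentally, you are over-thinking the structural step: $V_\ell\cup V_h$ is by definition a bipartition of $T$, so high and low vertices \emph{do} strictly alternate along every path, and any two high vertices are automatically at even distance---no further argument needed.
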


\begin{proof}
It is enough to show that $|B|$ and $|R|$ do not change if we choose an alternative root $r'$ at distance $2$ from $r$. Let $R'$ and $B'$ be the sets of red and blue edges with respect to $r'$. Let $x$ be the common neighbour of $r$ and $r'$. Thus $rx \in B- B'$ and $r'x \in B' - B$. Apart from these edges, $B$ and $B'$ do not differ. Hence $|B|=|B'|$, and also $|R|=|R'|$.
\end{proof}

Define
\begin{equation*}
d:= 4|L|+2|R|\enspace.
\end{equation*}
Since $T$ has at least two leaves, $d\geq8$.
By \lemref{p1}, $d$ does not depend on the choice of $r$. 

For each edge $e=vw$ of $T$ such that $\dist(r,v)=\dist(r,w)-1$, let $T_e$ be the maximal rooted subtree of $T$ containing $vw$, and no other neighbour of $v$.

Define the function $\varphi: E(T) \rightarrow \N$ as follows.
For each blue edge $e$ in $T$, define
\begin{equation}
\eqnlabel{F1F2}
\varphi(e):=4|L\cap E(T_e)|+2|R\cap E(T_e)|\enspace.
\end{equation}
Now consider a red edge $vw$ in $T$ with $\dist(r,v)=\dist(r,w)-1$. Thus $\dist(r,v)$ is odd, $v$ is low, and $\deg(v)=2$. Let $uv$ be the blue edge incident to $v$. Define 
\begin{equation}
\eqnlabel{F3}
\varphi(vw):= d+2-\varphi(uv)
\enspace.
\end{equation}

\Figure{Tree}{\includegraphics{Tree}}{An example of the edge labelling $\varphi$ with $|R|=6$ and $|B|=14$ and $|L|=8$ and $d=2\cdot 6+4\cdot 8=44$. Red edges are drawn thick.}

The next lemma immediately follows from \eqnref{F3}.

\begin{lemma}
\lemlabel{p3}
If $v$ is a low vertex of degree $2$ and $v$ is incident with
edges $e$ and $f$, then $\varphi(e)+\varphi(f)=d+2$.
\end{lemma}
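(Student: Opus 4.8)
The plan is to fix the root $r$ and simply unwind the definition \eqnref{F3}; the statement really is immediate once the relevant parities are pinned down.

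First I would record the parity structure of $T$. Since $T$ is a low-high tree, the bipartition $V(T)=\vl\cup\vh$ is its (unique) proper $2$-colouring, so the vertices alternate between low and high along every path from $r$; as $r$ is high, every high vertex lies at even distance from $r$ and every low vertex lies at odd distance from $r$. Hence the low vertex $v$ of the statement satisfies $v\neq r$, and its two incident edges are the edge $uv$ towards $r$, where $\dist(r,u)=\dist(r,v)-1$, and the edge $vw$ away from $r$, where $\dist(r,w)=\dist(r,v)+1$ — there is exactly one such ``downward'' edge because $\deg(v)=2$. Since $u$ is high, $\dist(r,uv)=\dist(r,u)$ is even, so $uv$ is blue; since $v$ is low, $\dist(r,vw)=\dist(r,v)$ is odd, so $vw$ is red, and $uv$ is the unique blue edge incident with $v$.

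Then I would apply \eqnref{F3} to the red edge $vw$: it satisfies $\dist(r,v)=\dist(r,w)-1$ with $\dist(r,v)$ odd, $v$ is low with $\deg(v)=2$, and the blue edge incident to $v$ is $uv$, so by definition $\varphi(vw)=d+2-\varphi(uv)$. Adding $\varphi(uv)$ to both sides gives
$$\varphi(e)+\varphi(f)=\varphi(uv)+\varphi(vw)=d+2,$$
which is the claim. The only point requiring any care is the colour bookkeeping in the second paragraph — confirming that of $e,f$ one is blue and one is red and that $uv$ is precisely the blue edge named in \eqnref{F3} — but this is exactly the parity observation already made, so no genuine obstacle arises. (Recall also that $d$ is a constant depending only on $T$ by \lemref{p1}, so nothing further needs to be checked.)
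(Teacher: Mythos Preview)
Your proof is correct and takes essentially the same approach as the paper, which simply states that the lemma ``immediately follows from \eqnref{F3}.'' You have merely spelled out the parity bookkeeping (identifying which of $e,f$ is the blue edge $uv$ and which is the red edge $vw$) that the paper leaves implicit.
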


The sum of $\varphi$ values around a high vertex is also constant.

\begin{lemma}
\lemlabel{p4}
Let $v$ be a high vertex and let $E_v$ be the set of edges incident with $v$. Then $\sum_{e \in E_v} \varphi(e)=d$.
\end{lemma}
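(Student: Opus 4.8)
\textbf{Proof proposal for \lemref{p4}.}

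The plan is to analyze the edges incident with a high vertex $v$ according to their colour and direction relative to the root $r$. Recall that every edge incident with $v$ is either blue or red, and for each such edge we know whether $v$ is the endpoint closer to $r$ or farther from $r$. First I would split into two cases: $v=r$, and $v\neq r$.

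When $v=r$, every edge incident with $v$ is blue (distance $0$ from $r$), and $v$ is the closer endpoint of each. The subtrees $T_e$ over $e\in E_v$ partition $E(T)\setminus\{\text{nothing}\}$; more precisely $\{E(T_e):e\in E_v\}$ is a partition of $E(T)$ into the branches hanging off $r$. Hence by \eqnref{F1F2},
\[
\sum_{e\in E_v}\varphi(e)=\sum_{e\in E_v}\bigl(4|L\cap E(T_e)|+2|R\cap E(T_e)|\bigr)=4|L|+2|R|=d,
\]
using that $L$ and $R$ are partitioned by the branches. This disposes of the root case.

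When $v\neq r$, the vertex $v$ is high so $\dist(r,v)$ is even; let $e_0=uv$ be the edge joining $v$ to its parent $u$, with $\dist(r,u)=\dist(r,v)-1$ odd. Since $\dist(r,e_0)$ is odd, $e_0$ is red. The remaining edges in $E_v$ point away from $r$; each such edge $f=vw$ has $\dist(r,f)=\dist(r,v)$ even, hence is blue, and $T_f$ is the branch hanging below $v$ through $w$. The branches $\{T_f : f\in E_v\setminus\{e_0\}\}$ partition $E(T_{e_0})\setminus\{e_0\}$. Therefore
\[
\sum_{f\in E_v\setminus\{e_0\}}\varphi(f)
=4|L\cap(E(T_{e_0})\setminus\{e_0\})|+2|R\cap(E(T_{e_0})\setminus\{e_0\})|.
\]
Since $e_0$ is red and $u$ is low of degree $2$ (as $u$ is at odd distance from $r$, hence low, and is incident with the blue edge above it and the red edge $e_0$), \lemref{p3} applied at $u$ gives $\varphi(e_0)+\varphi(e')=d+2$ where $e'$ is the blue edge incident with $u$; but I would instead directly use the definition \eqnref{F3}: $\varphi(e_0)=d+2-\varphi(\tilde e)$ where $\tilde e$ is the blue edge at $u$ leading toward $r$, and by \eqnref{F1F2} applied to that blue edge, $\varphi(\tilde e)=4|L\cap E(T_{e_0})|+2|R\cap E(T_{e_0})|$ (note $E(T_{\tilde e})=E(T_{e_0})$ since $\tilde e$ and $e_0$ bound the same subtree rooted at the low degree-$2$ vertex $u$). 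Combining,
\[
\sum_{e\in E_v}\varphi(e)
=\varphi(e_0)+\Bigl(\varphi(\tilde e)-2\cdot\mathbf{1}[e_0\in R]\cdot 1\Bigr)
= (d+2-\varphi(\tilde e))+(\varphi(\tilde e)-2)=d,
\]
where the term $-2$ accounts for removing the red edge $e_0$ itself from $E(T_{e_0})$ when counting over the strict branches below $v$.

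The main obstacle I anticipate is bookkeeping the role of the edge $e_0$ joining $v$ to its parent: one must carefully check that $E(T_{\tilde e})=E(T_{e_0})$ (both equal the subtree cut off at $u$), that $e_0$ itself is red and is the unique red edge at $u$ pointing away from $r$, and that subtracting its contribution $2|R\cap\{e_0\}|=2$ is exactly what reconciles $\varphi(\tilde e)$ with the sum of the $\varphi$-values of the blue children of $v$. Once the colour/direction pattern around $v$ is pinned down (all children-edges blue and outward, the parent-edge red and the unique red edge at $u$), the arithmetic collapses cleanly to $d$. I would present the $v=r$ case first as a warm-up and then the general case, citing \eqnref{F1F2}, \eqnref{F3}, and \lemref{p1} (so that $d$ is well-defined independent of $r$).
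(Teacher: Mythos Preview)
Your proposal is correct and follows essentially the same route as the paper's proof: split into $v=r$ and $v\neq r$, use the branch partition at $r$ for the first case, and for the second case combine the definition \eqnref{F3} at the low degree-$2$ parent $u$ with the fact that the blue child-subtrees of $v$ partition the subtree below $u$ up to the red edge $e_0$. One small correction: your claim $E(T_{\tilde e})=E(T_{e_0})$ is literally false, since $E(T_{\tilde e})=E(T_{e_0})\cup\{\tilde e\}$; what is true (and all you need) is that $\tilde e$ is blue and its upper endpoint is not a leaf, so the leaf count and red-edge count over $T_{\tilde e}$ and $T_{e_0}$ coincide, giving $\varphi(\tilde e)=4|L\cap V(T_{e_0})|+2|R\cap E(T_{e_0})|$ as you use.
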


\begin{proof}
First suppose that $v=r$. Then 
\begin{align*}
\sum_{rx\in E_r}\varphi(rx)
\;=\;\sum_{rx\in E_r}4|L\cap E(T_{rx})|+2|R\cap E(T_{rx})|
\;=\;4|L\cap E(T)|+2|R\cap E(T)|
\;=\;d\enspace.
\end{align*}
Now assume that $v\neq r$. Since $v$ is high, $\dist(v,r)$ is even, and $v$ is incident to one red edge $uv$ (where $u$ is the neighbour of $v$ closer to $r$ than $v$). Thus $u$ is low, and $\deg(u)=2$. Let $t$ be the other neighbour of $u$. 
Let $e_1,\dots,e_k$ be the blue edges incident to $v$.
Then 
\begin{align*}
\sum_{e \in E_v} \varphi(e)
\;=\;&\varphi(uv)+\sum_{i=1}^k\varphi(e_i)\\
\;=\;&d+2-\varphi(tu)+\sum_{i=1}^k\varphi(e_i)\\
\;=\;&d+2-4|L\cap E(T_{tu})|-2|R\cap E(T_{tu})|
+\sum_{i=1}^k4|L\cap E(T_{e_i})|+2|R\cap E(T_{e_i})|\enspace.
\end{align*}
Observe that $L\cap E(T_{tu})=\bigcup_iL\cap E(T_{e_i})$
and $R\cap E(T_{tu})-\bigcup_iR\cap E(T_{e_i})=\{uv\}$.
Thus
$$\sum_{e \in E_v} \varphi(e)\;=\;d+2-2=d\enspace.$$
\end{proof}

Observe that, in principle, the definition of $\varphi$ depends on the choice of $r$. However, this is not the case.

\begin{lemma}
\lemlabel{p5}
Let $r$ and $r'$ be high vertices of $T$, and let $\varphi$ and $\varphi'$ be the functions defined above using $r$ and $r'$ as roots, respectively. Then $\varphi=\varphi'$.
\end{lemma}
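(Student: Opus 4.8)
The plan is to reduce the general case to the case handled in \lemref{p1}, namely where $r$ and $r'$ are high vertices at distance $2$. First I would observe that since $T$ is connected and all high vertices lie in the single bipartition class $\vh$, any two high vertices are joined by a path in $T$ whose internal vertices alternate between low and high; in particular such a path has even length, so it is a concatenation of sub-paths of length $2$, each joining two high vertices. Therefore it suffices to prove $\varphi=\varphi'$ when $\dist(r,r')=2$, and then chain the equalities along the path.

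So assume $\dist(r,r')=2$, and let $x$ be the common (low, degree-$2$) neighbour of $r$ and $r'$. As in the proof of \lemref{p1}, $d$ is unchanged, the red/blue partition changes only in that $rx$ switches from blue to red and $r'x$ switches from red to blue, and for every edge $e$ not on the path $r,x,r'$ the rooted subtree $T_e$ is the same with respect to either root. The next step is to verify $\varphi(e)=\varphi'(e)$ for all edges, splitting into cases according to whether $e$ lies ``on the $r$-side'' (in the component of $T-x$ containing $r$, together with the edge $rx$) or ``on the $r'$-side''. For a blue edge $e$ strictly on the $r'$-side, $T_e$ is literally unchanged, so $\varphi(e)=\varphi'(e)$ directly from \eqnref{F1F2}; similarly for a red edge $vw$ strictly on the $r'$-side, the incident blue edge $uv$ is also strictly on that side and unchanged, so \eqnref{F3} gives $\varphi(vw)=\varphi'(vw)$. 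The same reasoning, with the roles of $r$ and $r'$ swapped, covers edges strictly on the $r$-side, except that now the edge $rx$ (blue for $r$, red for $r'$) and the edge $r'x$ (red for $r$, blue for $r'$) must be handled by hand.

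The small finite check is the crux: I would compute $\varphi(rx)$ and $\varphi'(rx)$, and $\varphi(r'x)$ and $\varphi'(r'x)$, directly from the definitions and show they agree. For instance $rx$ is blue with respect to $r$, so $\varphi(rx)=4|L\cap E(T_{rx})|+2|R\cap E(T_{rx})|$ where $T_{rx}$ is the subtree on the far side of $r$ from $x$; with respect to $r'$ the edge $rx$ is red, incident to the low vertex $x$ of degree $2$ whose other edge is $r'x$, so $\varphi'(rx)=d+2-\varphi'(r'x)$, and $\varphi'(r'x)$ is a blue label computable from $T_{r'x}$ (the subtree on the far side of $r'$ from $x$). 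Using $E(T)=E(T_{rx})\,\dot\cup\,\{rx,r'x\}\,\dot\cup\,E(T_{r'x})$ together with $d=4|L|+2|R|$ and the colour-parity bookkeeping (the two extra edges $rx,r'x$ contribute exactly the ``$+2$'' correction, since neither is a leaf edge and exactly the right number of them are red in each frame) yields $\varphi(rx)=\varphi'(rx)$, and symmetrically $\varphi(r'x)=\varphi'(r'x)$.

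The main obstacle I anticipate is purely combinatorial bookkeeping: being careful about which of the two special edges is counted as red versus blue in each frame, and verifying that the resulting constant offsets cancel to give exactly $d+2$ in \eqnref{F3}. Once the length-$2$ case is nailed down, the reduction via a path of high vertices is immediate, completing the proof.
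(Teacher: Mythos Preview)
Your reduction to the case $\dist(r,r')=2$ and your treatment of the edges in $B\cap B'$ and in $R\cap R'$ match the paper's proof. The one real difference is in handling the two special edges $rx$ and $r'x$: the paper dispatches these in a single line by invoking \lemref{p4}, applied at the high vertex $r$ once for $\varphi$ and once for $\varphi'$. Since every edge at $r$ other than $rx$ lies in $B\cap B'$ and hence already satisfies $\varphi=\varphi'$, the identity $\sum_{e\in E_r}\varphi(e)=d=\sum_{e\in E_r}\varphi'(e)$ forces $\varphi(rx)=\varphi'(rx)$, and symmetrically at $r'$.

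Your proposed direct computation for these two edges can be made to work, but as written it contains exactly the bookkeeping slip you anticipated. By the paper's definition, $T_{rx}$ (with root $r$) is the subtree on the \emph{$x$-side} of $r$, and it already contains both edges $rx$ and $r'x$; it is not ``the subtree on the far side of $r$ from $x$''. Consequently your decomposition $E(T)=E(T_{rx})\,\dot\cup\,\{rx,r'x\}\,\dot\cup\,E(T_{r'x})$ is false (the two subtrees overlap precisely in $\{rx,r'x\}$), and with your stated orientation the sum $\varphi(rx)+\varphi'(r'x)$ comes out to $d-2$ rather than the required $d+2$. Once the orientation is corrected, the direct count does yield $d+2$ and your argument goes through --- but invoking \lemref{p4} sidesteps the computation entirely.
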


\begin{proof}
Since $T$ is connected, it is enough to show that $\varphi=\varphi'$ whenever $\dist(r,r')=2$. 
Let $x$ be the common neighbour of $r$ and $r'$.
Let $B'$ be the set of blue edges with respect to $r'$.
Now $B$ and $B'$ (as well as $R$ and $R'$) differ only in $rx$ and $r'x$.
Since \eqnref{F1F2} only considers $\varphi$ and $\varphi'$  values of blue edges away from the root, $\varphi(e)=\varphi'(e)$ for each $e \in B\cap B'$.
Since each edge incident with $r$ or $r'$ apart from $rx$ and $r'x$ is in $B \cap B'$, and since $d$ is invariant, \eqnref{F3} shows that $\varphi$ and $\varphi'$ match on every edge in $R \cap R'$. Finally \lemref{p4} implies that $\varphi$ and $\varphi'$ also match on edges between $rx$ and $r'x$.
\end{proof}

\begin{lemma}
\lemlabel{p6}
$\varphi(e) \ge 4$ for every edge $e \in E(T)$.
\end{lemma}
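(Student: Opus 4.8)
The plan is to prove $\varphi(e) \ge 4$ by a case analysis on whether $e$ is blue or red. For a blue edge $e$, this is immediate from the defining equation~\eqnref{F1F2}: the subtree $T_e$ contains the edge $e$ itself, and since each leaf edge is blue and each leaf is at odd distance from $r$ while $e$ is at even distance, $T_e$ contains at least one leaf. Hence $|L\cap E(T_e)|\geq 1$, so $\varphi(e)=4|L\cap E(T_e)|+2|R\cap E(T_e)|\geq 4$. (Alternatively, even without locating a leaf, one can argue that $T_e$ must contain a leaf of $T$ other than possibly the endpoints; I would spell out that $T_e$ is a nontrivial rooted subtree and every such subtree of $T$ has a leaf of $T$ among its vertices, and the edge incident to that leaf lies in $E(T_e)$ and is blue, hence counted.)

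For a red edge $vw$ with $\dist(r,v)=\dist(r,w)-1$, I would use~\eqnref{F3}: $\varphi(vw)=d+2-\varphi(uv)$, where $uv$ is the blue edge at $v$. So it suffices to show $\varphi(uv)\le d-2$. Now $\varphi(uv)=4|L\cap E(T_{uv})|+2|R\cap E(T_{uv})|$, and $T_{uv}$ is a \emph{proper} rooted subtree of $T$ (it omits, for instance, the edge from $v$ back toward $r$, since $v$ is low of degree~$2$ and $uv$ is the other edge at $v$ — wait, one must be careful here: $uv$ is blue and $v$ is the low endpoint of the red edge $vw$, so $v$ has degree~$2$ with incident edges $uv$ and $vw$; thus $T_{uv}$ is the component containing $uv$ after deleting the neighbour of the root-side endpoint, and it does not contain the whole tree because $vw$ and everything beyond it is excluded). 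Since $d=4|L|+2|R|$ counts \emph{all} leaf and red edges with the same weights, and $E(T_{uv})$ omits at least the red edge $vw$, we get $\varphi(uv)\le d-2|R\cap\{vw\}|\le d-2$. Therefore $\varphi(vw)=d+2-\varphi(uv)\ge d+2-(d-2)=4$.

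The main obstacle — really the only subtle point — is pinning down exactly which edges $T_{uv}$ excludes in the red-edge case, so that the bound $\varphi(uv)\le d-2$ is rigorous. One must verify that when $e$ is the blue edge $uv$ incident to the low degree-$2$ vertex $v$ that also bears the red edge $vw$, the subtree $T_{uv}$ is disjoint from $\{vw\}$ and from $T_{vw}$ more generally; this follows because $v$ has only two incident edges, so removing the appropriate neighbour to form $T_{uv}$ separates $uv$ from $vw$. Once that is clear, the weighting in $d$ dominates the weighting in $\varphi(uv)$ edge-by-edge, and the $-2$ slack comes for free from the omitted red edge $vw$. I would also remark that \lemref{p4} is not needed here; the argument is purely combinatorial from the definitions of $\varphi$ and $d$.
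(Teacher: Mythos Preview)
Your blue-edge case is fine, but in the red-edge case you have the orientation of $T_{uv}$ backwards, and this breaks the argument as written.

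Recall the definition: for an edge $e=xy$ with $\dist(r,x)=\dist(r,y)-1$, the subtree $T_e$ is the maximal subtree containing $xy$ and \emph{no other neighbour of $x$} (the root-side endpoint). Thus $T_e$ consists of $x$, $y$, and everything \emph{below} $y$ (away from $r$). Applying this to $uv$ with $u$ closer to $r$: $T_{uv}$ contains $u$, $v$, and all descendants of $v$. Since $v$ has degree $2$ with neighbours $u$ and $w$, the unique branch below $v$ goes through $w$; hence $T_{uv}$ \emph{does} contain the red edge $vw$ and everything beyond it --- the opposite of what you claim. You cannot get $\varphi(uv)\le d-2$ by omitting $vw$.

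The inequality $\varphi(uv)\le d-2$ is nevertheless true, and your approach can be repaired by looking on the other side of $u$. If $u=r$, then since $r$ is high it has a neighbour $v'\ne v$, and the branch through $v'$ contains a leaf of $T$ not in $T_{uv}$, giving $\varphi(uv)\le d-4$. If $u\ne r$, then the edge from $u$ to its parent has distance $\dist(r,u)-1$ from $r$, which is odd, so it is a red edge outside $T_{uv}$, giving $\varphi(uv)\le d-2$. Either way $\varphi(vw)=d+2-\varphi(uv)\ge 4$.

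The paper's own proof sidesteps this case analysis entirely: by \lemref{p5}, $\varphi$ does not depend on the root, and every edge becomes blue once one roots at its high endpoint, so the blue-edge estimate alone suffices. Your direct argument is more elementary in that it avoids the invariance lemma, but it requires the extra care above about which side of $u$ the subtree $T_{uv}$ lies on.
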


\begin{proof}
While the colour of an edge $e$ may depend on the choice of $r$, \lemref{p5} says that $\varphi(e)$ does not depend on the choice of $r$. Every edge can be made blue for an appropriate choice of $r$, and $\varphi(e)\geq4$ for every blue edge $e$ by \eqnref{F1F2}. 
\end{proof}

And now for something completely different. 
Let $e=u_1u_2$ and $f=u_3u_4$ be two independent edges in the complete graph
$K_{d+1}$, where $d \ge 4$. The \emph{single-horned graph} $G_{d,4}$ is obtained from $K_{d+1}$ by adding a new vertex $x$, connecting $x$ to $u_1,u_2,u_3,$ and $u_4$ and
removing edges $e$ and $f$. 
Observe that $\deg(x)=4$. Call $x$ the \emph{horn} of $G_{d,4}$. Call 
the remaining vertices the \emph{original vertices} of $G_{d,4}$, which
all have degree $d$.

Let $a,b \ge 4$ be even integers such that $d=a+b-2$.
Choose matchings $M_a$ and $M_b$ with $\frac{a}{2}$ and $\frac{b}{2}$ edges, respectively, that cover all the vertices of $K_{d+1}$. 
Hence $M_a$ and $M_b$ share exactly one vertex. 
Take two new vertices $x_a$ and $x_b$ and join $x_a$ to every vertex of $M_a$ and $x_b$ to every vertex of $M_b$. Next delete the edges of $M_a$ and $M_b$.
The resulting graph is called the \emph{double-horned graph} $G_{d,a,b}$. As above, $x_a$ and $x_b$ are called the \emph{horns} of $G_{d,a,b}$, and the remaining vertices, all of degree $d$, are the \emph{original vertices}.

Let $e=uv$ be an edge in a single- or double-horned graph $G$.
If $u$ or $v$ is a horn in $G$, then the vertex $uv$ is a \emph{horn} in $G/e$ and is \emph{original} otherwise. Inductively, we can define horns and original
vertices for every contraction minor of a horned graph.

\begin{lemma}
\lemlabel{contr}
Let $G'$ be a proper contraction minor of a horned graph $G_{d,4}$ or $G_{d,a,b}$. If $G'$ contains an original vertex, then some original vertex of $G'$ has degree less than $d$.
\end{lemma}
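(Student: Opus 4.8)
The plan is to analyze how degrees change as we contract edges inside a horned graph, tracking the ``degree deficiency'' introduced by removing matching edges at the horns. First I would record the key structural invariant: in $G_{d,4}$ (respectively $G_{d,a,b}$), each original vertex has degree exactly $d$, and each removed matching edge $u_iu_j$ creates a pair of original vertices that are now non-adjacent but share the horn as a common neighbour. Equivalently, each original vertex is ``missing'' exactly one edge to another original vertex compared to $K_{d+1}$, and that missing edge is compensated by an edge to a horn. The horn $x$ in $G_{d,4}$ has degree $4$; the horns $x_a,x_b$ in $G_{d,a,b}$ have degrees $a$ and $b$.

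The main step is an induction on the number of contracted edges (equivalently, on $|V(G)|-|V(G')|$). For the base case, where $G'=G$ is not proper, there is nothing to prove, so assume at least one edge is contracted; write $G'=G_1/e$ where $G_1$ is a contraction minor of $G$ with one fewer contraction and $e=uv$. I would split into cases according to whether $e$ is incident to a horn of $G_1$ and whether $e$ joins two original vertices. If $e$ joins two original vertices $u,v$ of $G_1$, then by \obsref{DegreeChange} the new vertex $uv$ has degree $\deg_{G_1}(u)+\deg_{G_1}(v)-p-2$ where $p$ is the number of common neighbours; since in a graph obtained from (a contraction minor of) $K_{d+1}$-plus-horns two original vertices have at most $d-1$ common original neighbours when they are non-adjacent (and we only contract an existing edge, so they are adjacent and have at most $d-2$ common neighbours among the $d-1$ other original vertices, plus possibly horns), a short count shows $uv$ has degree at most $d-1$ unless... — and this ``unless'' is exactly the obstacle, handled below. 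If instead $e$ is incident to a horn, the contracted vertex becomes a horn of $G'$, so it is irrelevant to the conclusion; but I must check that contracting a horn-edge does not raise any \emph{original} vertex's degree above $d-1$ — and indeed it can only lower degrees of common neighbours or leave them unchanged (\obsref{DegreeChange} again), so original vertices that were already deficient stay deficient, and I then apply the induction hypothesis to $G_1$ to locate an original vertex of degree $<d$ in $G_1$ that survives into $G'$ with degree still $<d$.

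The crux, and the step I expect to be the main obstacle, is ruling out the possibility that \emph{every} original vertex of $G'$ attains degree exactly $d$ (or more). The cleanest way is a global counting / potential argument rather than pure case analysis: assign to each graph in the ``horned family'' a deficiency potential, e.g.\ $\Phi(H):=\sum_{w \text{ original}}(d-\deg_H(w))$, and show that (i) $\Phi(G_{d,4})\ge 2$ and $\Phi(G_{d,a,b})\ge 2$ with all summands nonnegative (from the missing matching edges — each removed matching edge contributes $+1$ to the two endpoints' deficiency but each such endpoint gains a horn-edge, so net deficiency is positive precisely because horns have degree $<d$), and (ii) no single edge contraction can make $\Phi$ drop to $0$ while some original vertex remains, because contracting an edge $uv$ replaces the contributions of $u$ and $v$ by that of $uv$ and can absorb at most the $+2$ from the ``$-p-2$'' term, with equality forcing $p=0$, i.e.\ $u,v$ have no common neighbour — impossible here since any two original vertices of a contraction minor of $K_{d+1}$-plus-horns with $\ge 3$ original vertices have a common original neighbour. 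So as long as $G'$ retains an original vertex it retains positive total deficiency, forcing some original vertex to have degree $<d$. I would then just need to check the small cases where $G'$ has very few (one or two) original vertices separately, which is immediate since a lone original vertex has degree at most $|V(G')|-1$ and one verifies $|V(G')|-1<d$ in that regime.
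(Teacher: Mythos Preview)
Your potential argument has a concrete error at step (i). In both $G_{d,4}$ and $G_{d,a,b}$, every original vertex has degree \emph{exactly} $d$: each matching edge removed from the underlying $K_{d+1}$ is compensated one-for-one by an edge to a horn, so the net deficiency of every original vertex is zero. Hence $\Phi(G_{d,4})=\Phi(G_{d,a,b})=0$, not $\ge 2$. Your parenthetical justification (``net deficiency is positive precisely because horns have degree $<d$'') conflates horns with original vertices; horns are excluded from the sum defining $\Phi$. With $\Phi$ starting at $0$, step (ii) as you phrase it (``no single contraction can make $\Phi$ drop to $0$'') is vacuous and the argument collapses. A rescue would require showing that $\Phi$ becomes positive after the first contraction and then stays positive, but your degree bookkeeping in (ii) is also off: contracting an original--original edge $uv$ changes the $u,v$-contribution to $\Phi$ by $p+2-d$, not by something bounded below by $-2$, and tracking $\Phi$ through horn-incident contractions (which \emph{remove} an original vertex's term from the sum altogether) is delicate enough that the sketch does not obviously go through.

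The paper avoids any potential function. It simply notes that if three or more edges are contracted then $G'$ has at most $d$ vertices, so every vertex has degree below $d$. Otherwise at most two edges are contracted: contracting any edge between two original vertices already gives at least three original common neighbours of degree $d-1$, too many to eliminate with a single further contraction; hence both contracted edges must be horn-incident, but contracting a horn-edge leaves at least two original neighbours of that horn with degree $d-1$, and repairing them would force contracting an original--original edge, which has just been ruled out.
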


\begin{proof}
We shall leave the proof for $G_{d,4}$ to the keen reader.
Let $G$ be the doubly horned graph $G_{d,a,b}$, and let $F \subseteq E(G)$ such 
that $G/F =G'$. If $|F| \ge 3$, then $G'$ has at most $d$ vertices,
and \emph{all} its vertices have degree less than $d$. Now assume that $|F| \le 2$.

Let $e=uv$ be an edge connecting a pair of original vertices. 
There are at least $7 \le a+b-1$ original vertices in $G$ and at least
three original vertices are connected with both $u$ and $v$. 
Thus $G/e$ has at least three original vertices of degree less than $d$, which cannot all be eliminated by a single additional contraction.
Hence every edge in $F$ is incident with a horn. Let $x$ be a horn incident with
$e$. At least two neighbours of $x$ (which are original vertices)
have degree less than $d$ in $G/e$, yet by the above argument, the edge between them cannot be contracted.
\end{proof}

We are now ready to state the first theorem of this section.

\begin{theorem}
\thmlabel{main}
For every low-high tree $T$, there is an integer $d$ and a graph $G$ such that:
\begin{enumerate}
\item[{\rm (G1)}] $G$ is $d$-regular,
\item[{\rm (G2)}] $T$ is the block decomposition tree of $G$,
\item[{\rm (G3)}] $8\leq d \le 4 |E(T)|$, and
\item[{\rm (G4)}] $G \in \DD{d+1}$.
\end{enumerate}
\end{theorem}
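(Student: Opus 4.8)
The plan is to build $G$ from the given low-high tree $T$ by replacing each high vertex of $T$ with a (single- or double-)horned graph, and using the horns as the gluing vertices that realise the cut-vertices of $T$. Fix a root $r$ (high) and use the edge labelling $\varphi$ from \eqnref{F1F2}--\eqnref{F3}, which by \lemref{p5} is independent of $r$, satisfies $\varphi(e)\ge 4$ for all $e$ (\lemref{p6}), and sums to $d$ around each high vertex (\lemref{p4}) and to $d+2$ around each low vertex of degree $2$ (\lemref{p3}), where $d:=4|L|+2|R|$. Set $a_e:=\varphi(e)$ for each edge $e$ of $T$; these are even integers $\ge 4$. For a high vertex $v$ of degree $j$ with incident edges $e_1,\dots,e_j$, we have $a_{e_1}+\cdots+a_{e_j}=d$, and a horned graph on original vertices of degree $d$ with $j$ horns of degrees $a_{e_1},\dots,a_{e_j}$ can be constructed: for $j=2$ this is precisely the double-horned graph $G_{d,a_{e_1},a_{e_2}}$ (using $d=a_{e_1}+a_{e_2}-2$... wait — here $a_{e_1}+a_{e_2}=d$, so one must instead iterate the single/double-horn construction, or more cleanly, for a high vertex of degree $j$ add $j$ horns to a $K_{d+1}$ by deleting $j$ disjoint matchings $M_{e_1},\dots,M_{e_j}$ covering all vertices, with $|M_{e_i}|=a_{e_i}/2$; since $\sum a_{e_i}=d=|V(K_{d+1})|-1$ these matchings can be chosen to be "nested" so that consecutive ones share exactly one vertex, exactly as in the double-horned construction). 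This produces a graph, call it $H_v$, that is $d$-regular except for the $j$ horns, and horn $x_{e_i}$ has degree $a_{e_i}$.

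Next I would assemble $G$: for each edge $e=vw$ of $T$ with $v$ the high endpoint, $w$ is either another high vertex or a low vertex. If $w$ is low of degree $1$ (a leaf), then $e$ is blue with $\varphi(e)=4$ (each leaf edge is blue and $|L\cap E(T_e)|=1$, $|R\cap E(T_e)|=0$), and we simply let the horn $x_e$ of $H_v$ be a genuine degree-$4$ vertex — no gluing, it becomes a block $K$... actually it stays as the horn vertex of the single-horned-type block. If $w$ is low of degree $2$, incident to $e$ and $f$, then $\varphi(e)+\varphi(f)=d+2$ by \lemref{p3}; identify the horn $x_e$ of $H_v$ with the horn $x_f$ of $H_{v'}$ (where $v'$ is the other high neighbour of $w$), creating a single vertex of degree $\varphi(e)+\varphi(f)-2=d$. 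Thus every such identification produces a degree-$d$ vertex, and the block structure at that vertex is a cut-vertex joining the block of $H_v$ and the block of $H_{v'}$, realising the low vertex $w$ of $T$. Leaves of $T$ correspond to the "dangling" single horns (degree $4$... no: we must make leaf blocks also, so that leaves of $T$ are blocks — I would have each leaf edge $e$ of $T$ carry $\varphi(e)=4$ and attach at $x_e$ a single-horned graph $G_{d,4}$ whose horn is $x_e$, so that $x_e$ has degree $4+?$; re-examining, the cleanest is: every edge of $T$ becomes a horn-identification, where leaf "half-edges" get paired with the horn of a small single-horned block). After this assembly, (G1) holds by the degree bookkeeping above, (G2) holds because the blocks of $G$ are exactly the $H_v$'s (which are $2$-connected) and the cut-vertices are exactly the horn-identifications, giving block decomposition tree $T$, and (G3) holds since $8\le d=4|L|+2|R|\le 4|E(T)|$.

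The main obstacle — and the heart of the proof — is (G4): showing $G\in\DD{d+1}$, i.e.\ $\delta(G)=d$... wait, $\delta(G)=d$ and $d+1=d+1$, so actually $G\in\DD{d-1}$? Let me recompute: $G$ is $d$-regular so $\delta(G)=d$, hence if $G\in\DD{k}$ then $k=d-1$. The statement says $G\in\DD{d+1}$, so in fact $G$ must have minimum degree $d+2$; this means the construction must add two universal-type vertices or I have miscounted the horn degrees — presumably the horned blocks are built inside $K_{d+3}$ or the final $G$ is $(d+1)$-regular after all, so I will trust (G1) literally ($d$-regular) and (G4) literally and reconcile by taking the "$d$" of (G4) to be the regularity, so the claim is $G\in\DD{d+1}$ with $G$ being... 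I will follow the paper's convention and simply verify the conditions (D1)--(D4) of \lemref{BasicDegree} for $k=d+1$: once regularity is pinned down, (D1), (D3), (D4) are immediate (connected, regular, no two high-degree vertices since there are none). The crux is (D2): every proper contraction minor $H$ of $G$ has a vertex of degree $< \delta(G)$. For this I would argue by the tree structure: a contraction minor either contracts within a single block $H_v$ — in which case \lemref{contr} applied to that horned block produces an original vertex of too-small degree (and one checks this vertex survives, since its small degree cannot be "repaired" by contractions elsewhere) — or it contracts an edge whose contraction merges two blocks across a cut-vertex, which again by \lemref{contr}-type local analysis drops a degree. Carefully, I would take a minor-minimal counterexample $H$ with $\delta(H)\ge \delta(G)$, look at any contracted edge $e=uv$, locate the block(s) containing $e$, and invoke \lemref{contr} together with the fact that the low vertices of $T$ being degree-$\le 2$ limits how contractions in one block can raise degrees in another — the $\varphi$-balance lemmas \lemref{p3} and \lemref{p4} are exactly what guarantee the degree accounting works out so that no "repair" is possible. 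This case analysis over the tree, leaning on \lemref{contr} block-by-block, is where essentially all the work lies.
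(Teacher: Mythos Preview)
Your construction is inverted. In a block decomposition tree the leaves are blocks, so in the low--high bipartition the \emph{low} vertices correspond to blocks and the \emph{high} vertices to cut-vertices. The paper therefore attaches a horned graph $G_u$ to each \emph{low} vertex $u$: a single-horned $G_{d,4}$ if $u$ is a leaf, and a double-horned $G_{d,\varphi(e),\varphi(f)}$ if $u$ is a low vertex of degree~$2$ with incident edges $e,f$ (here $\varphi(e)+\varphi(f)=d+2$ by \lemref{p3}, exactly matching the requirement $a+b=d+2$ in the definition of $G_{d,a,b}$). The horns are then identified at each \emph{high} vertex $v$, and by \lemref{p4} the resulting degree there is $\sum_{e\ni v}\varphi(e)=d$, giving $d$-regularity. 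Your plan places the horned graphs at high vertices and glues at low vertices, which (i) forces you to invent $j$-horned graphs for arbitrary $j$ (for which \lemref{contr} is not available), (ii) makes your degree computation at a degree-$2$ low vertex wrong---identifying two horns of degrees $\varphi(e)$ and $\varphi(f)$ gives degree $\varphi(e)+\varphi(f)=d+2$, not $d$---and (iii) produces the wrong block decomposition tree, since the identified vertices would sit where blocks, not cut-vertices, should be. Your uncertainty about what to do at leaves is a symptom of this reversal: once low vertices carry the blocks, a leaf simply \emph{is} a single-horned block, with its unique horn glued at the adjacent high vertex.

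Your sketch for (G4) also misses the global part of the argument. Using \lemref{contr} block-by-block handles the case where some block is \emph{partially} contracted (an original vertex of degree $<d$ survives). But one must also treat contractions that collapse entire blocks to single vertices; the paper does this by distinguishing inner blocks from end-blocks and counting: contracting $k_i>0$ inner blocks removes $k_i$ cut-vertices and, by \twolemref{p3}{p4}, reduces the total cut-vertex degree by $k_i(d+2)$, so some remaining cut-vertex has degree $<d$, and subsequent end-block contractions cannot repair it. This is precisely where \twolemref{p3}{p4} enter the proof of (G4), not merely as degree bookkeeping for (G1). Finally, your arithmetic worry about (G4) is legitimate: a $d$-regular graph in $\DD{k}$ has $k=d-1$, so ``$\DD{d+1}$'' in the statement should read $\DD{d-1}$; the paper's proof indeed shows every proper contraction minor has a vertex of degree less than $d$.
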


\begin{proof}
Adopt the above notation. Let $d:= 4|L| + 2|R|$. By construction, $d\geq8$. 
Note that $d \le 4 |E(T)|$ with equality only if $T$ is a star.

For every leaf $u$ of $T$, let $G_u$ be a copy of the single-horned graph $G_{d,4}$.
For every non-leaf low vertex $v$ of $T$ incident with edges $e$ and $f$, 
let $G_v$ be a copy of the double-horned graph $G_{d,a,b}$, where $a:=\varphi(e)$ and $b:=\varphi(f)$. Note that $a,b\geq4$ by \lemref{p6}.

Observe that there is a natural correspondence between the set of horns in the above graphs and their degrees, and between $E(T)$ and their $\varphi$ values.
As illustrated in \figref{Example}, identifying horns wherever the edges in $T$ have a common (high) end-vertex gives rise to a $d$-regular graph $G$ (by \lemref{p4}). Hence $G$ satisfies (G1), (G2) and (G3).

\Figure{Example}{\includegraphics[width=\textwidth]{Example}}{The graph $G$ produced from the given low-high tree with $d=4\cdot 4+2\cdot2=20$. Shaded regions represent cliques minus the dashed matchings.}

Since $G$ is connected and $d$-regular, \lemref{BasicDegree} implies that to establish (G4) it suffices to show that every proper contraction minor of $G$ has a vertex of degree less than $d$. Suppose on the contrary that there is a proper contraction minor $G'=G/E'$ of $G$ with $\delta(G) \ge d$. Take such a $G'$ with the minimum number of vertices. Thus $G'$ has no cut-edges, since contracting a cut-edge does not decrease $\delta$ (since $G'\not\cong K_2$).

Let $H$ be an arbitrary block of $G$ and consider $H/E'$.
Suppose that $H/E'$ is not contracted to a single vertex.
Now $H/E'\not\cong K_2$ (as this would either be a
nonexistent cut-edge in $G'$ or would imply that $G'$ has a vertex of degree 1
which is also absurd). 
But if $H/E'$ has at least three vertices and $H/E'$ is a proper minor
of $H$, then by \lemref{contr}, $H/E'$ has an inner vertex of degree less than $d$. 
Hence $H/E'$ is either trivial or is left intact in a contraction.

So we may assume that $G'$ is obtained by shrinking several blocks of
$G$ to single vertices. We may assume that $G'$ is obtained by first
contracting $k_i \ge 0$ inner blocks of $G$, and later contracting
$k_e \ge 0$ end-blocks of $G$, where $k_i+k_e \ge 1$.
Let $G^*$ be the graph obtained after contracting the inner blocks.

Now $k_i > 0$, as otherwise $G'$ is a proper subgraph of $G$.
By shrinking $k_i$ inner blocks we have reduced the number of
cut-vertices by $k_i$, and also reduced the sum of their
degrees by $k_i(d+2)$; see~\twolemref{p3}{p4}. Hence $G^*$ has at least one
\emph{cut-vertex} $v$ of degree less than $d$, and since $G' \ne G^*$, at least one
contraction of an end-block follows.
Finally, contracting an end-block cannot increase $\deg(v)$.
This contradiction completes the proof of (G4).
\end{proof}

We now prove that minor-minimal minimum-degree graphs can have arbitrary block structure.

\begin{theorem}
\thmlabel{main2}
For every block decomposition tree $T$, 
there is an integer $d$ and a graph $G$ such that
\begin{enumerate}
\item[{\rm (H1)}] $T$ is the block decomposition tree of $G$,
\item[{\rm (H2)}] $\delta(G) \le 8 |E(T)|$, and
\item[{\rm (H3)}] $G \in \DD{d+1}$ where $d=\delta(G)$.
\end{enumerate}
\end{theorem}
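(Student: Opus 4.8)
The plan is to reduce the general case of an arbitrary block decomposition tree $T$ to the low-high case already handled by \thmref{main}. Recall that a block decomposition tree is exactly a tree $T$ admitting a bipartition $V(T)=A\cup C$ with all leaves in one class, say $C$ is the class containing all leaves (so $C$ plays the role of cut-vertices and $A$ the role of blocks). The obstruction to applying \thmref{main} directly is that a general block decomposition tree may have ``block'' vertices of degree $1$ (leaves that are blocks), and ``cut-vertex'' vertices of degree exactly $1$ are forbidden but degree-$2$ cut-vertex vertices are allowed with no upper bound on the degree of block vertices; the low-high condition instead insists every low vertex has degree $\le 2$. So first I would show how to massage $T$ into a low-high tree whose block decomposition realisation, after a local modification of the construction, still has block decomposition tree $T$.

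Concretely, I would do the following. Orient $T$ with $C$ as the ``cut'' side. A leaf of $T$ lies in $C$; in the graph we build, each such leaf should correspond to a block attached at a single cut-vertex, i.e.\ an end-block. For an internal vertex $v\in C$ of degree $t\ge 2$, $v$ is a cut-vertex shared by $t$ blocks; in \thmref{main} this is exactly a ``high'' vertex. For a vertex $w\in A$, $w$ is a block; if $\deg_T(w)\ge 2$ it meets several cut-vertices, and if $\deg_T(w)=1$ it is an end-block. In the low-high tree of \thmref{main}, the roles are flipped (low vertices have degree $\le 2$, high vertices degree $\ge 2$): there, a degree-$2$ low vertex is a cut-vertex joining two blocks, and a high vertex is a block. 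The mismatch is that a general $T$ can have a cut-vertex of arbitrarily high degree (many blocks through one vertex), which the low-high framework models not by a single high vertex but needs a block in between. The fix is standard: \emph{subdivide}. Replace $T$ by a tree $T'$ obtained by subdividing each edge of $T$ once; then set $\vl$ to be the new subdivision vertices together with the leaves, and $\vh$ to be the original vertices. One checks that $T'$ is then a low-high tree precisely when every vertex of $T$ has degree $\ge 2$ except leaves — which is automatic — after noting leaves of $T$ become low vertices of degree $1$ (leaves of $T'$) as required, and original internal vertices of $T$ become high vertices of degree $\ge 2$, and subdivision vertices become low vertices of degree $2$. Apply \thmref{main} to $T'$ to get a $d$-regular graph $G$ with block decomposition tree $T'$, $8\le d\le 4|E(T')|=8|E(T)|$, and $G\in\DD{d+1}$. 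Finally, the block decomposition tree of $G$ is $T'$, not $T$; but $T'$ is a subdivision of $T$, and contracting each block of $G$ that corresponds to a subdivision vertex of $T'$ — wait, that changes the graph. Instead I would argue that suppressing the degree-$2$ cut-vertices is not available, so the correct move is different: build $G$ so that the blocks sitting at the subdivision vertices are \emph{not present}, i.e.\ identify the two horns meeting at a subdivision low vertex of $T'$ directly, which by \lemref{p3} glues two double-horned (or single-horned) pieces at a single vertex and is exactly the operation already used to form $D_3$ in \secref{Basics}. Doing this for every subdivision vertex collapses $T'$ back to $T$ as the block decomposition tree while keeping $G$ $d$-regular (the degree bookkeeping at a glued vertex is $\varphi(e)+\varphi(f)=d+2$, minus the two deleted matching edges — precisely the single-horned/double-horned arithmetic), and \lemref{contr} still applies block-by-block, so the proof of (G4)=(H3) goes through verbatim with the same minimum-counterexample argument.

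The key steps, in order: (i) translate ``block decomposition tree'' into the bipartition-with-all-leaves-on-one-side characterisation given in the footnote; (ii) form the subdivision $T'$ and verify it is a low-high tree, checking the leaf/degree conditions carefully; (iii) invoke \thmref{main} on $T'$ to obtain a candidate $d$-regular graph, with the bound $d\le 4|E(T')|=8|E(T)|$, giving (H2); (iv) modify the construction at subdivision vertices by identifying horns (as in the $D_3$ example), and verify that the resulting graph is still $d$-regular and that its block decomposition tree is $T$ rather than $T'$; (v) re-run the contraction-minor analysis from the proof of \thmref{main} — using \lemref{contr}, \lemref{p3}, \lemref{p4}, and \lemref{BasicDegree} — to conclude $G\in\DD{d+1}$.

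The main obstacle I anticipate is step (iv): making precise that identifying the two horns at a degree-$2$ low vertex of $T'$ yields a graph whose block decomposition tree is exactly $T$ (one must check that the glued vertex is a genuine cut-vertex and that no blocks accidentally merge), and that all the degree and minor-obstruction bookkeeping survives the identification. This is essentially the observation already flagged in \secref{Basics} that $D_3$ arises by identifying the horns of two copies of $G_{5,4}$, so the mechanism is available; the work is in checking it respects regularity (via \lemref{p4}) and that \lemref{contr} can still be applied to each block of the identified graph, since a block of $G$ corresponding to an original vertex $w\in A$ of $T$ is unchanged, while the pieces formerly at subdivision vertices have disappeared. A secondary subtlety is the degenerate cases — $T$ a single edge (so $G$ should be $K_2$, handled separately as in \thmref{main}), or $T$ having a block vertex of degree $1$ adjacent only to a leaf cut-vertex — which should be dispatched by hand.
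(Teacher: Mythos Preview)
Your high-level plan---reduce to the low-high case of \thmref{main} via a subdivision of $T$---matches the paper's, but the execution rests on a reversal of roles that breaks every subsequent step. In the construction of \thmref{main}, each \emph{low} vertex $v$ of the tree is replaced by a horned graph $G_v$, and horns are identified wherever edges share a \emph{high} endpoint; so low vertices of the tree become \emph{blocks} of $G$ and high vertices become \emph{cut-vertices}. Correspondingly, the leaves of a block-decomposition tree are end-blocks, not cut-vertices (see the paper's footnote). Your diagnosis of the obstruction---``a cut-vertex of arbitrarily high degree''---is therefore backwards: cut-vertices of any degree are already high vertices and cause no trouble, whereas the genuine obstruction is a \emph{block} vertex of degree $\ge 3$. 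This reversal makes step~(ii) fail: after subdividing every edge of $T$, the subdivision vertices form one colour class of $T'$ and \emph{all} original vertices (leaves included) the other, so your proposed set $\vl=\{\text{subdivision vertices}\}\cup\{\text{leaves}\}$ is not a bipartition at all, and $T'$ is not low-high. Step~(iv) likewise does not correspond to a coherent operation: at a degree-$2$ low vertex of $T'$ there is a double-horned block, not a place where two horns meet, and simply omitting that block and identifying its two boundary cut-vertices yields a vertex of degree $2d-(\varphi(e)+\varphi(f))=d-2$, so regularity is lost, contrary to your claim.

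The paper proceeds differently. It subdivides only the edges incident with the set $H_b$ of block vertices of degree $\ge 3$; this genuinely yields a low-high tree $T'$ with $|E(T')|<2|E(T)|$, whence $d'\le 4|E(T')|<8|E(T)|$. It then recovers $T$ from $T'$ not by deleting or identifying pieces but by \emph{bag extension}: it partitions $V(G')$ into bags indexed by $V_b$, and for each bag adds one new apex vertex adjacent to every vertex of that bag. This raises the minimum degree by exactly one ($d=d'+1$) and fuses each group of blocks of $G'$ surrounding a vertex of $H_b$ into a single block of $G$, so that the block-decomposition tree collapses from $T'$ back to $T$. The contraction-minor analysis then has to be redone for $G$ (not just quoted block-by-block from \lemref{contr}), because the new apex vertices create $2$-cuts inside the big blocks; this is the real work in the paper's proof and has no counterpart in your proposal.
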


\begin{proof}
Let $V_c \cup V_b$ be the bipartition of $V(T)$, such that every leaf of $T$ is in $V_b$. Let $H_b$ denote the set of vertices in $V_b$ with degree at least $3$ in $T$.
Thus $T$ is low-high if and only if $H_b=\emptyset$. 
By \thmref{main} we may assume that $T$ is not low-high, and $H_b\neq\emptyset$. Choose an arbitrary vertex $x \in H_b$.

Let $T'$ be the tree obtained from $T$ by subdividing each edge that is incident with a vertex in $H_b$ once. Each such subdivision vertex and each vertex in $V_b-H_b$ has degree at most $2$ in $T'$. Each vertex in $V_c \cup H_b$ has degree at least $2$ in $T'$. Thus $T'$ is low-high. In particular, $x$ is a high vertex of $T'$.

%This implies that the newly constructed subdivision vertices together with the vertices of $V_b-H_b$ are the low vertices of $T'$, and $V_c \cup H_b$ is the set of high vertices of $T'$.

Now $|E(T')| < 2 |E(T)|$ since at least one edge of $T$ is incident with a leaf and did not get subdivided in the construction of $T'$. By \thmref{main} there exists an integer $d' \le 4 E(T') < 8 E(T)$ and a $d'$-regular graph $G' \in \DD{d'+1}$ such that $T'$ is the block decomposition tree of $G'$. In order to keep the arguments below as simple as possible, assume that $G'$ \emph{is} the graph obtained by the construction in the proof of \thmref{main}. Observe that every block of $G'$ contains at least $12$ vertices, since $T$ has at least one vertex in $H_b$. Note that the cut-vertices of $G'$ come in two flavours: ones that correspond to vertices of $V_c$, and ones that correspond to vertices of $H_b$. Similarly, every non-cut-vertex of $G'$ corresponds to a vertex of $V_b- H_b$.

Now define a partition of $V(G')$ into bags $\{B_y : y \in V_b\}$ labelled by vertices $V_b$, satisfying the following conditions:
\begin{enumerate}
\item[(C1)] for every $y \in H_b$ the bag $B_y$ contains the cut-vertex $c$ that corresponds to $y$ as well as the interior vertices of every block that contains $c$,
\item[(C2)] for every $y \in V_b- H_b$ the bag $B_y$ contains every interior vertex of a block  that corresponds to $y \in H_b$.
\end{enumerate}
We have so far partitioned every vertex of $G'$ that is not a cut-vertex corresponding to a vertex in $V_c$.
\begin{enumerate}
\item[(C3)] if $c$ is a cut-vertex corresponding to a vertex of $V_c$, then let $c_x$ be its neighbour on some shortest $c$--$x$ path in $G'$, and put $c$ in the bag $B$ that already contains $c_x$.
\end{enumerate}

Observe that every block of $G'$ contains $d'+1$ interior vertices, hence
every bag $B_y$ contains at least $d'+1$ vertices.

Finally we obtain $G$ from $G'$ by adding for each bag $B_y$ of $G'$ a new  vertex $\tilde{y}$ which is made adjacent to every vertex of its bag
$B_y$. Now $G'$ is a subgraph of $G$ and every $v \in V(G')$ has degree equal to $d'+1$, and new vertices have degree at least $d'+1$. Call this process \emph{bag extension} and let $d:=d'+1$.

Now $G$ contains two types of blocks: \emph{small blocks} that contain
interior vertices of exactly one block of $G'$, and \emph{big blocks} that contain interior vertices of several blocks of $G'$. Observe that
every big block $B$ contains a separating set of size two comprised of its new vertex and a vertex from $H_b$.

Let $B'$ (respectively, $B$) be an end-block of $G'$ ($G$) and let $c$ be a cut-vertex that separates $B'$ ($B$) from the rest of $G'$ ($G$).
By the construction of $G'$ there are exactly four edges incident with 
$c$ whose other end-vertex is in $B' (B)$.

Let $e$ be an arbitrary edge of $G$ that is not one of the four edges incident to some cut-vertex of an end-block. Assume that $e$ belongs to block $B$ of $G$. Then there are at least six vertices of degree $d$ in $G$ that are all adjacent to both end-vertices of $e$. This implies that $G/e$ contains at least six vertices of degree less than $d$, and no contraction of an additional two edges of $B$ can eliminate all the vertices of degree less than $d$.

First observe that an end-block of $G$ contains exactly $d+2$ vertices and the other small blocks contain exactly $d+3$ vertices. Every big block on the other hand contains a pair of vertices: the new vertex and a cut-vertex of $G'$ corresponding to a vertex in $H_b$.

It remains to prove that $G \in \DD{d+1}$. Since every edge has an end-vertex of degree $d$, no edge-deleted subgraph of $G$ has minimum degree at least $d$.

Hence we only have to consider contraction minors of $G$.
Let $F \subseteq E(G)$ be a nonempty edge set and let $G^*=G/F$.
We may split $F =F' \cup F^*$ so that $F' \subseteq E(G')$.

A block $B$ of $G$ may either get contracted to a single vertex, get partially contracted, or survive the contraction of $F$ without changes.

First assume that $B/F$ gets partially contracted. If $B'$ is an end-block, then $B/F$ has exactly $d+1$ vertices obtained by contracting a single edge. This is not possible as a vertex of degree less than $d$ would be created.
If $B$ is a small block, then contracting any edge of $B$ leaves at least six vertices of degree less than $d$ in $B$. Since $B$ has $d+3$ vertices in the beginning, an additional two contractions decrease the vertex count below $d+1$, which is absurd.

Let $B$ be a big block that gets partially contracted.
If contraction identifies the new vertex $n$ of $B$ and
a cut-vertex $c$ of $G'$ corresponding to a vertex in $H_b$ then
$B/{nc}$ contains at least six vertices of degree less than $d$ in \emph{every}
block $B'$ of $G'$ that is a subgraph of $B$.
Since $B'$ contains $d+2$ vertices, $B'/F$ is trivial for every
$B' \subseteq B$, which is nonsense.
Otherwise assume that $B' \subseteq B$ is a block of $G'$ that gets partially contracted. The $d+1$ interior vertices of $B'$ are separated from the rest of $G$ by three vertices. This implies that at most three edges are contracted in order to contract $B'$ partially. Yet a single contraction produces six vertices of degree less than $d$ in $B'$, so that an additional two contractions do not suffice.

Hence no block of $G$ gets partially contracted in $G/F$. Now $G/F$ may be obtained from $G'/F$ by extension of bags, where $G'/F$ is a contraction of $G'$ that either identifies a block of $G'$ or leaves it unchanged.
In this case, $G'/F$ contains a vertex of degree less than $d'$, and
bag extension can only increase its degree by one. This completes the proof of \thmref{main2}.
\end{proof}

\begin{open}
By the Robertson-Seymour graph minor theorem, every graph in \DD{k} has at most $f(k)$ vertices, for some function $f$. It would be interesting to obtain a simple proof of this result, and to obtain good bounds on $f$.\\
 By \thmref{main} with $T=K_{1,s}$, there is a graph $G\in\DD{4s+1}$ with $1+s(4s+1)$ vertices. Does every graph in \DD{k} have $O(k^2)$ vertices? \\
By \thmref{main} with $T=P_{2s+1}$, there is a graph $G\in\DD{2s+7}$ with diameter $2s$. Does every graph in \DD{k} have $O(k)$ diameter?
\end{open}

%\BOX{To answer this question, prove that every graph with minimum degree $k+1$ and sufficiently many vertices and no edge between two high-degree vertices has a matching whose contraction gives a graph with minimum degree at least $k+1$.}

%%%%%%%%%%%%%%%%%%%%%%%%%%%%%%%%%%%%%%%%%%%%%%%%%%%%%%%%%%%%%%%%%%%%%%%%%%%%%%%
\section{Complete Multipartite Graphs}
\seclabel{CompleteMultipartite}
%%%%%%%%%%%%%%%%%%%%%%%%%%%%%%%%%%%%%%%%%%%%%%%%%%%%%%%%%%%%%%%%%%%%%%%%%%%%%%%

This section characterises the complete multipartite graphs in \DD{k}, in \CC{k}, in \TT{k}, and in \WW{k}. See \citep{Lucena-DAM07, Chleb-DAM02, Rama-SJDM97} for other results on treewidth obstructions. We first prove three lemmas about complete multipartite graphs. The first says that complete multipartite graphs are highly connected. 

\begin{lemma}
\lemlabel{CMGconnected}
Every complete multipartite graph $G$ with minimum degree $k$ is $k$-connected. Moreover, if $vw$ is an edge of $G$ such that both $v$ and $w$ have degree at least $k+1$, then $G-vw$ is $k$-connected. 
\end{lemma}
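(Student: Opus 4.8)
The plan is to work with the standard reformulation of connectivity: a graph on more than $k$ vertices is $k$-connected if and only if $G-S$ is connected for every $S\subseteq V(G)$ with $|S|\le k-1$. Write $G=K_{n_1,\dots,n_c}$ with $n:=\sum_i n_i$ and $n_1:=\max_i n_i$, so that $k=\delta(G)=n-n_1$ and hence $n=n_1+k$. If $c=1$ then $k=0$ and the statement is trivial, so assume $c\ge 2$; then $n\ge k+1$, as required for $k$-connectivity. I will repeatedly use the elementary fact that a complete multipartite graph on at least two vertices is connected as soon as at least two of its parts are nonempty (any two nonempty parts are completely joined).

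For the first assertion, fix $S$ with $|S|\le k-1$. Then $G-S$ is again complete multipartite, on $|V(G-S)|=n-|S|\ge n-(k-1)=n_1+1\ge 2$ vertices, and each of its parts has at most $n_1$ vertices. Since $n_1+1$ exceeds the size of any single part, the vertices of $G-S$ cannot all lie in one part, so at least two parts of $G-S$ are nonempty and $G-S$ is connected. Hence $G$ is $k$-connected.

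For the ``moreover'' part, let $vw$ be an edge with $\deg_G(v),\deg_G(w)\ge k+1$, say $v\in P_i$ and $w\in P_j$ (with $i\ne j$ since $vw$ is an edge). From $\deg_G(v)=n-n_i\ge k+1=(n-n_1)+1$ we obtain $n_i\le n_1-1$, and symmetrically $n_j\le n_1-1$: the parts containing $v$ and $w$ are strictly smaller than the largest part. Since $|V(G-vw)|=n>k$, it suffices to prove that $(G-vw)-S$ is connected for every $S$ with $|S|\le k-1$. If $v\in S$ or $w\in S$ then $(G-vw)-S=G-S$, which is connected by the first part. So assume $v,w\notin S$; then $G-S$ is a connected complete multipartite graph (first part) having $vw$ as an edge, and it remains to show $vw$ is not a bridge of $G-S$. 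A single edge $vw$ of a connected complete multipartite graph can be a bridge only when that graph is a star $K_{1,m}$ of which $vw$ is an edge: if some third part is nonempty there is a detour $v$--$u$--$w$ through it, and if only the parts $P'_i:=P_i-S$ and $P'_j:=P_j-S$ of $v$ and $w$ are nonempty and both have size at least $2$ there is a detour $v$--$w'$--$v'$--$w$ with $v'\in P'_i\setminus\{v\}$ and $w'\in P'_j\setminus\{w\}$. So suppose $G-S$ is such a star. If it is centred at $w$, its leaf set is $P'_i$, and then $n_1+1\le|V(G-S)|=|P'_i|+1\le n_i+1\le n_1$, a contradiction; the case of a star centred at $v$ is symmetric, using $n_j\le n_1-1$. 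Hence $vw$ is not a bridge, $(G-vw)-S$ is connected, and $G-vw$ is $k$-connected.

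The main obstacle is really just the bridge analysis in the last paragraph: one must carefully verify the (short but slightly case-heavy) claim that an edge of a connected complete multipartite graph is a bridge only in the star case, and then use the degree hypothesis---which is precisely what forces the parts containing $v$ and $w$ to be smaller than the largest part---to eliminate that case by a one-line vertex count. Everything else is routine counting.
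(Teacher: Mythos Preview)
Your proof is correct and takes a genuinely different route from the paper's. The paper argues via Menger's theorem: for arbitrary vertices $x,y$ it explicitly exhibits $k$ internally disjoint $x$--$y$ paths (splitting into cases according to whether $x$ and $y$ lie in the same colour class, and then, for the ``moreover'' part, locally rerouting a path that happens to use $vw$). You instead use the vertex-cut formulation: after removing any $(k-1)$-set $S$, the surviving complete multipartite graph has more than $n_1$ vertices and hence at least two nonempty parts; for $G-vw$ you reduce to showing $vw$ is not a bridge of $G-S$, and the degree hypothesis $n_i,n_j\le n_1-1$ kills the only problematic (star) configuration by a vertex count. Your argument is shorter and avoids the somewhat fiddly explicit path constructions; the paper's approach, on the other hand, is constructive and makes the $k$ paths visible, which can be useful if one later needs them. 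Both are perfectly valid for the lemma as stated.
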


\begin{proof}
Let $x$ and $y$ be distinct vertices in $G$. It suffices to prove that there is a set of $k$ internally disjoint paths between $x$ and $y$ that avoid $vw$. Let $R$ be the set of vertices coloured differently from both $x$ and $y$. 

First suppose that $x$ and $y$ have the same colour. Then $\deg(x)=\deg(y)\geq k$, and $P:=\{xry:r\in R\}$ is a set of $\deg(x)$ internally disjoint paths between $x$ and $y$. If $vw$ is in some path in $P$, then without loss of generality $v=x$, implying $\deg(x)\geq k+1$, and at least $k$ paths in $P$ avoid $vw$.

Now assume that $x$ and $y$ are coloured differently. Let $S:=\{x_1,x_2\dots,x_p\}$ be the colour class that contains $x$, where $x=x_p$. Let $T:=\{y_1,y_2\dots,y_q\}$ be the colour class that contains $y$, where $y=y_q$. Without loss of generality, $n-p=\deg(x)\leq\deg(y)=n-q$, implying $q\leq p$. Thus $$P:=\{xy\}\cup\{xry:r\in R\}\cup\{xy_ix_iy:i\in[q-1]\}$$ is a set of $\deg(x)$ internally disjoint paths between $x$ and $y$. If $\deg(x)\geq k+1$ then at least $k$ paths in $P$ avoid $vw$. Now assume that $vw$ is in some path in $P$, but $\deg(x)=k$. Since each vertex $x_i$ has the same degree as $x$, and $v$ and $w$ both have degree at least $k+1$, the only possibility is that $v=y$ and $w=r$ for some $r\in R$ (or symmetrically $w=y$ and $v=r$). Thus $\deg(x)<\deg(y)$ and $q<p$. Replace the path $xry$ in $P$ by the path $xrx_{p-1}y$, which is internally disjoint from the other paths in $P$. 
\end{proof}

\begin{lemma}
\lemlabel{CMGequal}
Let $G$ be a complete multipartite graph on $n$ vertices. Then
$$\kappa(G)=\delta(G)=\tw{G}=\pw{G}=n-\alpha(G).$$
\end{lemma}

\begin{proof}
The degree of a vertex $v$ equals $n$ minus the size of the colour class that contains $v$. Since every independent set is contained within a colour class, the size of the largest colour class equals $\alpha(G)$. Thus $\delta(G)=n-\alpha(G)$. We have $\kappa(G)\leq\delta(G)\leq\tw{G}\leq\pw{G}$ for every graph $G$; see \citep{Bodlaender-TCS98,Diestel00}. By \lemref{CMGconnected}, $\kappa(G)\geq\delta(G)$. Thus it suffices to prove that $\delta(G)\geq\pw{G}$ for every complete multipartite graph $G$. Let $S=\{v_1,\dots,v_{\alpha(G)}\}$ be a largest colour class in $G$. Let $X:=V(G)-S$. Observe that $(X\cup\{v_1\},X\cup\{v_2\},\dots,X\cup\{v_p\})$
is a path decomposition of $G$ with width $|X|=n-\alpha(G)=\delta(G)$. Thus $\pw{G}\leq\delta(G)$.
\end{proof}

\begin{lemma}
\lemlabel{CMG}
If $H$ is a minor of a complete multipartite graph $G$, then $H$ can be obtained from $G$ by a sequence of edge contractions, vertex deletions, and edge deletions, such that each operation does not increase the minimum degree, connectivity, treewidth, or pathwidth.
\end{lemma}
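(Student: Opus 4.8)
The plan is to realise any given minor by performing first all the vertex deletions, then all the edge contractions, then all the edge deletions, arranged so that the first two phases never leave the class of complete multipartite graphs. The starting observation is that this class is closed under edge contraction and under vertex deletion (though not under edge deletion). Indeed, if $vw$ is an edge of a complete multipartite graph $G$, then $v$ and $w$ lie in distinct colour classes $C_v$ and $C_w$; every other vertex of $G$ lies outside at least one of $C_v,C_w$, hence is adjacent to $v$ or to $w$, so the vertex created by contracting $vw$ is adjacent to all of $V(G)\setminus\{v,w\}$. Thus $G/vw$ is complete multipartite, with colour classes $\{vw\}$, $C_v\setminus\{v\}$, $C_w\setminus\{w\}$, and the remaining original classes (omitting any that become empty); deleting a vertex is even more clearly fine. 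In both operations $|V(G)|$ drops by exactly one, while $\alpha(G)$ drops by at most one (a largest colour class either survives intact or loses a single vertex), so $|V(G)|-\alpha(G)$ does not increase. Since \lemref{CMGequal} says that $\kappa$, $\delta$, treewidth and pathwidth all coincide with $|V|-\alpha$ on complete multipartite graphs, each edge contraction and each vertex deletion carried out \emph{within} this class leaves all four parameters non-increasing.

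Now fix a model of $H$ as a minor of $G$: pairwise disjoint connected branch sets $B_x\subseteq V(G)$, one for each $x\in V(H)$, with an edge of $G$ between $B_x$ and $B_y$ whenever $xy\in E(H)$. I would realise $H$ as follows: (i) delete every vertex of $G$ outside $\bigcup_x B_x$; (ii) for each $x$ in turn, contract a spanning tree of $G[B_x]$ one edge at a time, collapsing $B_x$ to a single vertex; (iii) delete the edges not present in $H$. After (i) and (ii) the graph is complete multipartite with vertex set $V(H)$ and edge set containing $E(H)$, so (iii) does produce $H$. The key point is that every intermediate graph in (i) and (ii) is complete multipartite: deleting vertices preserves this, and in (ii) each contracted edge joins two parts of the current (partially collapsed) branch set and hence is an edge of the current complete multipartite graph — also, since the $B_x$ are disjoint, each $G[B_x]$ is still an induced subgraph when its turn comes. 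So the previous paragraph applies at every step of (i) and (ii), and all four parameters remain non-increasing throughout these two phases.

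Finally, the edge deletions in phase (iii) are harmless for all four parameters in an arbitrary graph: treewidth and pathwidth are minor-monotone and therefore subgraph-monotone; deleting an edge cannot increase $\kappa$; and $\delta(G-e)\le\delta(G)$ trivially, because a minimum-degree vertex of $G$ is still present in $G-e$ with no larger degree. (It is precisely here that no structural hypothesis is needed, whereas for contraction and for vertex deletion the complete multipartite hypothesis is essential — either operation can raise the minimum degree of a general graph.) Concatenating (i), (ii), (iii) yields the required sequence. I expect the fiddly part to be the bookkeeping in the first paragraph — confirming that $\alpha$ drops by at most one at each step, and that every graph arising while the branch sets are collapsed is genuinely complete multipartite, so that \lemref{CMGequal} is available at every step.
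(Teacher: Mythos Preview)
Your proposal is correct and follows essentially the same approach as the paper: keep the graph complete multipartite through all contractions and vertex deletions (so that \lemref{CMGequal} applies at every step), then do the edge deletions last where the four parameters are trivially monotone. Your tracking of $|V|-\alpha$ is exactly the paper's computation $\delta=n-\max_i a_i$ in different notation, and your explicit branch-set realisation (delete, then contract spanning trees, then delete edges) is a concrete instantiation of the paper's one-line claim that any minor can be reached by contractions and vertex deletions followed by edge deletions.
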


\begin{proof}
Every minor of a graph can be obtained by a sequence of edge contractions and vertex deletions, followed by a sequence of edge deletions. Contracting an edge or deleting a vertex in a complete multipartite graph produces another complete multipartite graph. Edge deletions do not increase the minimum degree, connectivity, treewidth, or pathwidth. Thus by \lemref{CMGequal}, it suffices to prove that edge contractions and vertex deletions in complete multipartite graphs do not increase the minimum degree.

Say $G=K_{a_1,\dots,a_p}$ has $n$ vertices. Then $G$ has minimum degree $n-\max_ia_i$. Let $G'$ be the graph obtained from $G$ by contracting an edge. Then $G'$ is a complete multipartite graph $K_{1,a'_1,\dots,a'_p}$ with $n-1$ vertices, where $a_i-1\leq a'_i\leq a_i$. Thus $$\delta(G')=n-1-\max_ia'_i\leq n-1-\max_i(a_i-1)=n-\max_ia_i=\delta(G)\enspace.$$ Now let $G'$ be the graph obtained from $G$ by deleting a vertex. Then $G'$ is a complete multipartite graph $K_{a'_1,\dots,a'_p}$ with $n-1$ vertices, where $a_i-1\leq a'_i\leq a_i$. By the same argument as before, $\delta(G')\leq \delta(G)$. 
\end{proof}

We now state and prove our first characterisation.

\begin{theorem}
\thmlabel{CMG-Degree}
For all $k\geq1$, the following are equivalent for a complete multipartite graph $G$:\\
\hspace*{5mm}\textup{(a)} $G\in\CC{k}$\\
\hspace*{5mm}\textup{(b)} $G\in\DD{k}$\\
\hspace*{5mm}\textup{(c)} for some $b\geq a\geq1$ and $p\geq2$ such that $k+1=a+(p-1)b$, $$G=K_{a,\underbrace{b,\dots,b}_p}\enspace,$$
\hspace*{10mm}and if $p=2$ then $a=b$.
\end{theorem}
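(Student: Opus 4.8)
The plan is to prove the cycle of implications $(b)\Rightarrow(a)\Rightarrow(c)\Rightarrow(b)$. Throughout I will use \lemref{CMG} and \lemref{CMGequal} (so that for a complete multipartite graph $G$ every minor $H$ satisfies $\delta(H)\le\delta(G)$, and $\kappa(H)=\delta(H)$ whenever $H$ is itself complete multipartite), together with \lemref{CMGconnected}, \lemref{BasicDegree} and \eqnref{DDinCC}. I will also use the elementary observation that contracting an edge $uv$ of a complete multipartite graph, with $u$ in a part $P$ and $v$ in a part $Q\ne P$, yields a complete multipartite graph in which $P$ and $Q$ are replaced by three parts of sizes $|P|-1$, $|Q|-1$ and $1$ (the merged vertex becomes adjacent to every surviving vertex); in particular every contraction minor of a complete multipartite graph is again complete multipartite. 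The direction $(b)\Rightarrow(a)$ is then immediate: if $G\in\DD{k}$ then $\delta(G)=k+1$ by \lemref{BasicDegree}, hence $\kappa(G)=\delta(G)=k+1$ by \lemref{CMGconnected}, so $G\in\CC{k}$ by \eqnref{DDinCC}.

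For $(a)\Rightarrow(c)$, write $G=K_{a_1,\dots,a_m}$ with $a_1\le\dots\le a_m=:b$ and $n:=|V(G)|$; since $G\in\CC{k}$ we have $\kappa(G)\ge k+1$, hence $\delta(G)=n-b\ge k+1$, and every proper minor of $G$ lies in $\C{k}$. I derive $(c)$ in four steps, each time exhibiting a proper minor of $G$ with connectivity exceeding $k$, a contradiction. (i) \emph{At most one part has size $<b$}: if $a_1\le a_2<b$, choose $v,w$ in these two (distinct) parts; both have degree $\ge\delta(G)+1$ and $vw\in E(G)$, so \lemref{CMGconnected} gives $\kappa(G-vw)\ge\delta(G)\ge k+1$. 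Hence $G=K_{a,b,\dots,b}$ with $p:=m-1$ copies of $b$ and $a:=a_1\le b$. (ii) \emph{$p\ge2$}: if $p=1$ then $G=K_{a,b}$ with $a=\delta(G)\ge k+1\ge2$, and contracting an edge gives $K_{1,a-1,b-1}$, a complete multipartite graph of maximum part $b-1$ and connectivity $(a+b-1)-(b-1)=a\ge k+1$. (iii) \emph{$\delta(G)=k+1$}: if $\delta(G)\ge k+2$, delete a vertex of a $b$-part; since $p\ge2$ a part of size $b$ remains, so $\kappa(K_{a,b,\dots,b,b-1})=(n-1)-b=\delta(G)-1\ge k+1$. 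Thus $k+1=\delta(G)=a+(p-1)b$. (iv) \emph{if $p=2$ then $a=b$}: if $p=2$ and $a<b$, contract an edge between the two $b$-parts to get $K_{1,a,b-1,b-1}$, which has maximum part $b-1$ (as $a\le b-1$) and connectivity $(a+2b-1)-(b-1)=a+b=\delta(G)=k+1$. This is exactly statement $(c)$.

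For $(c)\Rightarrow(b)$, let $G=K_{a,b,\dots,b}$ ($p$ copies of $b$) be as in $(c)$, with $n=a+pb$. Then $\delta(G)=n-b=a+(p-1)b=k+1$, $G$ is connected, and every vertex of degree $>k+1$ lies in the $a$-part, so conditions (D1), (D3), (D4) of \lemref{BasicDegree} hold. For (D2): any proper contraction minor $H=G/S$ with $S\ne\emptyset$ is a contraction minor of $G/e$ for any $e\in S$, and $G/e$ is complete multipartite, so $\delta(H)\le\delta(G/e)$ by \lemref{CMG}; thus it suffices to show $\delta(G/e)\le k$ for every edge $e$. If $e$ joins the $a$-part to a $b$-part, then $p-1\ge1$ parts of size $b$ survive, so the maximum part of $G/e$ is $b$ and $\delta(G/e)=(n-1)-b=k$. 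If $e$ joins two $b$-parts and $p\ge3$, then $p-2\ge1$ parts of size $b$ survive and again $\delta(G/e)=(n-1)-b=k$. Finally, if $e$ joins two $b$-parts and $p=2$, then $a=b$ by $(c)$, so $G/e=K_{b,b-1,b-1,1}$ has maximum part $b$ and $\delta(G/e)=(3b-1)-b=2b-1=k$. In every case $\delta(G/e)\le k$, so (D2) holds and $G\in\DD{k}$ by \lemref{BasicDegree}.

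I expect the main obstacle to be step $(a)\Rightarrow(c)$: the hypothesis only bounds the connectivity of the \emph{minors} of $G$, so for each structural conclusion one must select exactly the right single operation — edge deletion, vertex deletion, or edge contraction — whose output stays complete multipartite (so that \lemref{CMGequal} turns a connectivity bound into a transparent part-size computation) while still having connectivity at least $k+1$. The delicate case is contracting an edge between two largest parts of equal size, which leaves the connectivity equal to $k+1$ rather than dropping it; it is precisely this phenomenon that forces the caveat ``if $p=2$ then $a=b$'' in $(c)$, and that caveat is used exactly once — in the last case of $(c)\Rightarrow(b)$ — to exclude the contraction minor $K_{1,a,b-1,b-1}$ of minimum degree $k+1$.
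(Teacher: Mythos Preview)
Your proof is correct and follows essentially the same route as the paper's: the cycle $(b)\Rightarrow(a)\Rightarrow(c)\Rightarrow(b)$, the use of \lemref{CMGconnected} to force at most one small part, the single-contraction contradictions for $p=1$ and for $p=2$ with $a<b$, and the reduction of (D2) to single-edge contractions via \lemref{CMG}. The only organisational differences are that the paper pins down $\kappa(G)=k+1$ first (by an edge deletion using $\kappa(G-e)\ge\kappa(G)-1$) before analysing the part structure, whereas you establish the part structure first and then use a vertex deletion in step~(iii); and for $(c)\Rightarrow(b)$ the paper argues via \lemref{Basic} and a single operation of any of the three types, while you verify (D1)--(D4) of \lemref{BasicDegree} directly---both are equivalent here.
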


\begin{proof}
(b) $\Longrightarrow$ (a): Say $G\in\DD{k}$. 
By \lemref{BasicDegree}, $\delta(G)=k+1$. 
By \lemref{CMGequal}, $\kappa(G)=k+1$.
By \eqnref{DDinCC}, $G\in\CC{k}$.

\medskip (a) $\Longrightarrow$ (c): 
Consider a complete multipartite graph $G\in\CC{k}$. Thus $\kappa(G)\geq k+1$ by \lemref{Basic}. If $\kappa(G)\geq k+2$ then $\kappa(G-e)\geq k+1$ for any edge $e$ of $G$, implying $G\not\in\CC{k}$ by \lemref{Basic}. Now assume that $\kappa(G)=k+1$. Thus $\delta(G)=k+1$ by \lemref{CMGequal}. 

Suppose on the contrary that adjacent vertices $v$ and $w$ in $G$ both have degree at least $k+2$. By \lemref{CMGconnected}, $G-vw$ is $k$-connected, implying $G\not\in\CC{k}$. This contradiction proves that no two high-degree vertices in $G$ are adjacent. If two vertices in a complete multipartite graph have distinct degrees, then they are adjacent. Thus the high-degree vertices in $G$ have the same degree, and the vertices of $G$ have at most two distinct degrees. Since the degree of each vertex $v$ equals $|V(G)|$ minus the number of vertices in the colour class that contains $v$, the colour classes of $G$ have at most two distinct sizes. Hence for some $b\geq a\geq 1$ and $p,q\geq1$, $$G=K_{\underbrace{a,\dots,a}_q,\underbrace{b,\dots,b}_p}.$$

Hence $\kappa(G)=aq+b(p-1)=k+1$. If $a=b$ then, taking $q=1$, we are done. Now assume that $a<b$. Thus $q=1$ as otherwise two high-degree vertices are adjacent. Thus
$$G=K_{a,\underbrace{b,\dots,b}_p}\enspace.$$

Suppose on the contrary that $p=1$. Then $G=K_{a,b}$ and $\kappa(G)=a$. Contracting one edge in $G$ gives $K_{1,a-1,b-1}$, which by \lemref{CMGequal} also has connectivity $a$, implying $G\not\in\CC{k}$. This contradiction proves that $p\geq2$.

Now suppose that $p=2$. Then $G=K_{a,b,b}$ and $\kappa(G)=a+b$. Contracting one edge gives $K_{1,a,b-1,b-1}$, which by \lemref{CMGequal} also has connectivity $a+b$ (since $a<b$), implying $G\not\in\CC{k}$. This contradiction proves that if $p=2$ then $a=b$.

%(b) $\Longrightarrow$ (e): Consider a complete multipartite graph $G\in\DD{k}$. If two vertices in a complete multipartite graph have distinct degrees, then they are adjacent. By \obsref{HighIndependent}, no two high-degree vertices in $G$ are adjacent. Thus the high-degree vertices in $G$ have the same degree, and the vertices of $G$ have at most two distinct degrees. Thus $G$ is semi-regular. Hence for some $b\geq a\geq 1$ and $p,q\geq1$, $$G=K_{\underbrace{a,\dots,a}_q,\underbrace{b,\dots,b}_p}.$$ Hence the minimum degree of $G$ is $aq+bp-b$, which equals $k+1$. If $a=b$ then we can take $q=1$, and we are done. Now assume that $a<b$. Thus every low-degree vertex has degree $k+1=aq+(p-1)b$, and every high-degree vertex has degree $(q-1)a+pb$. Moreover, $q=1$ as otherwise two high-degree vertices are adjacent (\obsref{HighIndependent}). Thus $$G=K_{a,\underbrace{b,\dots,b}_p}.$$ By \propref{CommonNeighbour}, every graph in \DD{k} has a triangle, and $G$ is not bipartite. Thus $p\geq2$. Suppose on the contrary that $p=2$. Then $G=K_{a,b,b}$ has $K_{1,a,b-1,b-1}$ as a proper minor. Since $a<b$, the proper minor $K_{1,a,b-1,b-1}$ has the same minimum degree, $a+b$, as $G$. Thus $G\not\in\DD{k}$. This contradiction proves that if $p=2$ then $a=b$, which completes the first half of the proof. 

\medskip (c) $\Longrightarrow$ (b) Let $$G=K_{a,\underbrace{b,\dots,b}_p}\enspace,$$
for some $b\geq a\geq1$ and $p\geq2$, such that $k+1=a+(p-1)b$ and if $p=2$ then $a=b$. Thus $G$ has minimum degree $k+1$ by \lemref{CMGequal}. Suppose on the contrary that $G\not\in\DD{k}$. By \lemref{Basic}, $G$ has a proper minor $H$ with $\delta(H)\geq k+1$. By \lemref{CMG}, every minor of $G$ in the sequence from $G$ to $H$ has minimum degree at most $k+1$. Thus we can assume that $H$ was obtained from $G$ by a single edge contraction, a vertex deletion, or an edge deletion. In each case we prove that $\delta(H)\leq k$, which is the desired contradiction.

First suppose that $H$ is obtained from $G$ by an edge contraction. Then 
$$\text{(i) }H=K_{1,a-1,b-1\underbrace{b,\dots,b}_{p-1}}
\;\;\text{ or }\;\;
\text{(ii) }H=K_{1,a,b-1,b-1,\underbrace{b,\dots,b}_{p-2}}\enspace.$$

In case (i), $\delta(H)=1+(a-1)+(b-1)+(p-2)b=k$. 
%Consider case (i). If $a=1$ then $$H=K_{1,\underbrace{b,\dots,b}_{p-1}}$$ has minimum degree $1+(p-2)b=k+1-b<k+1$. Now assume that $a\geq2$. Hence $H$ has minimum degree $1+(a-1)+(b-1)+(p-2)b=k$. 
In case (ii) with $p\geq3$, $\delta(H)=1+a+2(b-1)+(p-3)b=k$. 
Now consider case (ii) with $p=2$. By assumption, $a=b$. Thus $H=K_{1,a,a-1,a-1}$ has minimum degree $1+2(a-1)=k$.

Now suppose that $H$ is obtained from $G$ by a vertex deletion. Then 
$$\text{(i) }H=K_{a-1,\underbrace{b,\dots,b}_{p}}
\;\;\;\text{ or }\;\;\;
\text{(ii) }H=K_{a,b-1,\underbrace{b,\dots,b}_{p-1}}\enspace.$$
In case (i), $\delta(H)=(a-1)+(p-1)b=k$.
In case (ii), $\delta(H)=a+(b-1)+(p-2)b=k$ (since $p\geq2$).

In $G$, every edge is incident to a vertex of degree $k+1$. Thus, if $H$ is obtained from $G$ by an edge deletion, then $\delta(H)\leq k$. 
\end{proof}

%%%%%%%%%%%%%%%%%%%%%%%%%%%%%%%%%%%%%%%%%%%%%%%%%%%%%%%%%%%%%%%%%%%%%%%%%%%

The remainder of this section is devoted to characterising the complete multipartite graphs in \TT{k} and in \WW{k}. We start with a lemma about independent sets in complete multipartite graphs.

\begin{lemma}
\lemlabel{CMG-IndependentSets}
For every edge $vw$ in a complete multipartite graph $G$, every independent set in $G-vw$ is either $\{v,w\}$ or is also independent in $G$.  Thus if $\alpha(G)\geq2$ (that is, $G$ is not a complete graph) then $\alpha(G-vw)=\alpha(G)$.
\end{lemma}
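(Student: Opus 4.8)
The plan is to split on whether an independent set $I$ of $G-vw$ contains both endpoints of the deleted edge. The starting point is the defining property of a complete multipartite graph: two vertices are adjacent in $G$ if and only if they lie in distinct colour classes. Deleting $vw$ changes the adjacency relation only on the single pair $\{v,w\}$, and $v,w$ lie in distinct colour classes of $G$; so any two vertices other than this pair are adjacent in $G-vw$ exactly when they are adjacent in $G$.

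Now take an independent set $I$ in $G-vw$. If $I$ does not contain both $v$ and $w$, then no pair of vertices of $I$ equals $\{v,w\}$, so by the previous remark $I$ is also independent in $G$; this is the first alternative. Otherwise $\{v,w\}\subseteq I$, and I claim $I=\{v,w\}$: if some further vertex $u\in I$ existed, then since $u$ and $v$ are non-adjacent in $G-vw$ and $\{u,v\}\neq\{v,w\}$, the vertices $u$ and $v$ would be non-adjacent in $G$, hence in the same colour class; symmetrically $u$ would lie in the colour class of $w$; but $v$ and $w$ lie in different colour classes, a contradiction. This gives the second alternative.

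For the final assertion, deleting an edge never decreases the independence number, so $\alpha(G-vw)\geq\alpha(G)$. Conversely every independent set of $G-vw$ is either $\{v,w\}$, which has size $2$, or an independent set of $G$, which has size at most $\alpha(G)$; when $\alpha(G)\geq2$ both possibilities have size at most $\alpha(G)$, so $\alpha(G-vw)\leq\alpha(G)$ and equality follows. (Note that $\alpha(G)\geq2$ holds precisely when some colour class has at least two vertices, i.e.\ when $G$ is not a complete graph.) There is no substantial obstacle here; the only points requiring a little care are to remember that an independent set of $G-vw$ may genuinely contain both $v$ and $w$, and to exclude the degenerate case $\alpha(G)=1$ in the equality statement.
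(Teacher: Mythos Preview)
Your proof is correct and follows essentially the same approach as the paper: both argue that an independent set of $G-vw$ not already independent in $G$ must contain both $v$ and $w$, and then use the colour-class structure to rule out any third vertex. The paper phrases the exclusion step as $I\subseteq S\cup\{w\}$ (with $S$ the colour class of $v$) and then $I\subseteq\{v,w\}$, while you reach the same contradiction by placing a hypothetical third vertex $u$ in both the colour class of $v$ and that of $w$; the underlying idea is identical.
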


\begin{proof}
Let $G':=G-vw$. Let $I$ be an independent set in $G'$ that is not independent in $G$. Thus both $v$ and $w$ are in $I$. Let $S$ be the colour class containing $v$. Every vertex not in $S\cup\{w\}$ is adjacent to $v$ in $G'$. Thus $I\subseteq S\cup\{w\}$. Every vertex in $S-\{v\}$ is adjacent to $w$ in $G'$. Thus $I:=\{v,w\}$. Hence every independent set in $G'$ is either $\{v,w\}$ or is also independent in $G$. Thus $\alpha(G')=\alpha(G)$ whenever $\alpha(G)\geq2$. 
%Moreover, if $\alpha(G)\geq3$, then $\{v,w\}$ is not maximum in $G'$, and the only maximum independent sets in $G'$ are the maximum independent sets in $G$.
\end{proof}

To prove lower bounds on treewidth we use the following idea. Let $G$ be a graph. Two subgraphs $X$ and $Y$ of $G$ \emph{touch} if $X\cap Y\neq\emptyset$ or there is an edge of $G$ between $X$ and $Y$. A \emph{bramble} in $G$ is a set of pairwise touching connected subgraphs. The subgraphs are called \emph{bramble elements}. A set $S$ of vertices in $G$ is a \emph{hitting set} of a bramble \B\ if $S$ intersects every element of \B. The \emph{order} of \B\ is the minimum size of a hitting set. The following `Treewidth Duality Theorem' shows the intimate relationship between treewidth and brambles.

\begin{theorem}[\citet{SeymourThomas-JCTB93}]
\thmlabel{TreewidthBramble}
A graph $G$ has treewidth at least $k$ if and only if $G$ contains a bramble of order at least $k+1$.
\end{theorem}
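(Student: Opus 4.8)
The plan is to prove the two implications separately; one is routine and the other is the real content of \citet{SeymourThomas-JCTB93}.

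For the easy direction --- that a bramble of order at least $k+1$ forces $\tw{G}\geq k$ --- I would take an arbitrary tree decomposition $(T,(W_t)_{t\in V(T)})$ of $G$ and any bramble \B, and produce a single bag that hits every element of \B. For each $B\in\B$, the set $T_B:=\{t\in V(T):W_t\cap V(B)\neq\emptyset\}$ induces a subtree of $T$; this is the standard fact that the nodes whose bags meet a connected subgraph form a connected subtree. Whenever $B,B'\in\B$ touch, $T_B$ and $T_{B'}$ intersect: if $B$ and $B'$ share a vertex $v$, any bag containing $v$ lies in both $T_B$ and $T_{B'}$; if there is an edge $uv$ with $u\in B$ and $v\in B'$, any bag containing $\{u,v\}$ lies in both. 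Hence $\{T_B:B\in\B\}$ is a pairwise-intersecting family of subtrees of the tree $T$, and by the Helly property for subtrees of a tree there is a node $t_0$ lying in all of them. Then $W_{t_0}$ meets every $B\in\B$, so $|W_{t_0}|$ is at least the order of \B, hence at least $k+1$; thus the decomposition has width at least $k$. As the decomposition was arbitrary, $\tw{G}\geq k$.

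For the hard direction --- that having no bramble of order $\geq k+1$ implies $\tw{G}\leq k-1$ --- I would follow \citet{SeymourThomas-JCTB93}. The shape of the argument is a recursive ``divide and conquer'' that builds a tree decomposition of $G$ with every bag of size at most $k$. One works with \emph{pieces}: a piece is a pair $(U,S)$ with $S\subseteq U\subseteq V(G)$ and $|S|\leq k$ such that $U\setminus S$ has no neighbours in $G$ outside $U$ (so $S$ is an adhesion set through which $U$ hangs off the rest of the not-yet-completed decomposition); one starts from the single piece $(V(G),\emptyset)$. Given a piece with $|U|>k$, one looks for a set $W$ with $S\subseteq W\subseteq U$ and $|W|\leq k$ that separates $U$, meaning $G[U]-W$ falls apart into parts $U_1,\dots,U_r$; each $U_j$, together with $W$, then forms a new piece $(W\cup U_j,\,N_G(U_j))$, and $W$ is added as a new bag of the tree decomposition adjacent to the bag above. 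Choosing $W$ appropriately (for instance an inclusion-minimal such separator) makes the recursion strictly decrease a suitable measure, so it terminates, and when a piece has $|U|\leq k$ that $U$ itself becomes a leaf bag. The entire point of the hypothesis is that it forbids getting stuck: if some piece with $|U|>k$ admitted no separating set $W$ of size $\leq k$, then the parts of $U$ one is forced to keep, ranging over all candidate separators, would organise into a bramble of order $>k$ in $G$, contradicting the assumption.

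The hard part --- and this is essentially all of \citet{SeymourThomas-JCTB93} --- is turning that last sentence into a proof: one must show rigorously that a piece resisting every admissible split encodes a bramble of order $>k$, and one must arrange the choices of $W$ so that the recursion is well-founded and the pieces genuinely fit together, through their shared adhesion sets, into a \emph{valid} tree decomposition of all of $G$ rather than merely of some subgraph. This meshing of local separator choices with the global tree structure is the technical core. Since \thmref{TreewidthBramble} is used only as a black box in the sequel, I would, having indicated why it holds, simply cite it.
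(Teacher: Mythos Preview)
The paper does not prove \thmref{TreewidthBramble} at all; it is stated as a cited result of \citet{SeymourThomas-JCTB93} and used as a black box. Your proposal to cite it is therefore exactly what the paper does, and your sketch of the easy direction (via the Helly property for subtrees) and your outline of the hard direction go beyond what the paper provides, so there is nothing to compare.
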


For example, say $G$ is a complete multipartite graph on $n$ vertices. Let $S$ be a set of vertices in $G$, one from each colour class; that is, $S$ is a maximum clique in $G$. Then it is easily seen that $\B:=E(G)\cup S$ is a bramble of order $n-\alpha(G)+1$, and thus $\tw{G}\geq n-\alpha(G)$ by \thmref{TreewidthBramble} (confirming \lemref{CMGequal}). The next two lemmas give circumstances when an edge can be deleted from a complete multipartite graph without decreasing the treewidth.

\begin{lemma}
\lemlabel{CMG-DeleteEdge}
Let $G$ be a complete multipartite graph with $\alpha(G)\geq3$, such that at least two colour classes contain at least two vertices. Let $vw$ be an edge, where both $v$ and $w$ are in colour classes that contain at least two vertices. Then $\tw{G-vw}=\tw{G}$.
\end{lemma}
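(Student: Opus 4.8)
The plan is to invoke the Treewidth Duality Theorem (\thmref{TreewidthBramble}) by exhibiting a bramble of order $n-\alpha(G)+1$ in $G-vw$, where $n:=|V(G)|$, and then combine this with the trivial upper bound. Indeed, edge deletion cannot increase treewidth, so $\tw{G-vw}\leq\tw{G}=n-\alpha(G)$ by \lemref{CMGequal}; and since $\alpha(G)\geq3\geq2$, \lemref{CMG-IndependentSets} gives $\alpha(G-vw)=\alpha(G)$. Hence a bramble of order $n-\alpha(G)+1$ in $G-vw$ forces $\tw{G-vw}\geq n-\alpha(G)$, and equality follows. So everything reduces to constructing such a bramble.

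\textbf{The bramble.} Let $C_1$ and $C_2$ be the colour classes of $G$ containing $v$ and $w$; by hypothesis $|C_1|\geq2$ and $|C_2|\geq2$, and $C_1\neq C_2$ since $vw\in E(G)$. First I would choose a transversal $S$ of the colour classes of $G$ (one vertex from each class) that avoids \emph{both} $v$ and $w$ — this is possible precisely because $|C_1|,|C_2|\geq2$. Then $S$ is a clique of $G-vw$, since the only non-edge of $G-vw$ that is an edge of $G$, namely $vw$, has no endpoint in $S$. Take $\B:=E(G-vw)\cup S$, i.e.\ the set of all edges of $G-vw$ (viewed as $2$-vertex subgraphs) together with all singletons $\{s\}$ for $s\in S$. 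This is the same bramble used earlier in the text for $G$ itself, but with $vw$ deleted and $S$ chosen with care.

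\textbf{Bramble axioms and order.} Each element of $\B$ is clearly connected; the work is to check that the elements pairwise touch. Two singletons touch because $S$ is a clique; a singleton $\{s\}$ and an edge touch because $s\in C_j$ for some $j$ and at least one endpoint of the edge lies outside $C_j$, and the corresponding edge of $G$ to $s$ is not $vw$ as $s\notin\{v,w\}$. The one genuinely delicate case is two disjoint edges $u_1u_2$ and $u_3u_4$ of $G-vw$: if none of the four cross pairs $u_iu_j$ ($i\in\{1,2\}$, $j\in\{3,4\}$) were an edge of $G-vw$, then, as at most one of these four distinct pairs equals $\{v,w\}$, at least three of them are non-edges of $G$; but among any three of the four pairs, two share an endpoint in $\{u_1,u_2\}$ (or in $\{u_3,u_4\}$), which forces $u_3,u_4$ (or $u_1,u_2$) into a common colour class, contradicting that $u_3u_4$ (or $u_1u_2$) is an edge. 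For the order, a hitting set $X$ must contain $S$ and be a vertex cover of $G-vw$, so $V(G)\setminus X$ is independent in $G-vw$ and disjoint from $S$; by \lemref{CMG-IndependentSets} it is either $\{v,w\}$ (size $2\leq\alpha(G)-1$) or an independent set of $G$, hence inside one colour class $C_j$ and omitting the vertex $S\cap C_j$, so of size at most $|C_j|-1\leq\alpha(G)-1$. Thus $|X|\geq n-\alpha(G)+1$, and equality is achieved by taking $C_j$ of maximum size; so $\B$ has order exactly $n-\alpha(G)+1$, as required.

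\textbf{Main obstacle.} The crux is the pairwise-touching verification, and in particular the realisation that $S$ must avoid both $v$ and $w$: if $w$ were in $S$, then an edge of $G-vw$ with one endpoint $v$ and the other endpoint in $C_2$ would fail to touch the singleton $\{w\}$. This is exactly where the hypothesis that \emph{both} colour classes $C_1$ and $C_2$ have at least two vertices is used.
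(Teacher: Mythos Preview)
Your proof is correct and follows essentially the same approach as the paper: the same bramble $\B=E(G-vw)\cup S$ with the transversal $S$ chosen to avoid both $v$ and $w$, the same touching verification, and the same use of \lemref{CMG-IndependentSets} for the order computation. The only cosmetic differences are that you phrase the edge--edge touching case as a pigeonhole contradiction (the paper observes directly that at least two of the four cross-pairs are edges of $G$), and you exploit $S\subseteq X$ explicitly in the order argument (the paper instead argues by contradiction that a hitting set of size $n-\alpha(G')$ would miss a vertex of $S$); both variants are equivalent.
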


\begin{proof}
Say $G$ has $n$ vertices. Let $G':=G-vw$. By \twolemref{CMGequal}{CMG-IndependentSets}, $\tw{G}=n-\alpha(G)=n-\alpha(G')$. Clearly $\tw{G'}\leq\tw{G}$. Thus it suffices to prove that $\tw{G'}\geq n-\alpha(G')$. 

Since $v$ and $w$ are in colour classes that contain at least two vertices, there is a set $S$ of vertices, such that both $v$ and $w$ are not in $S$,  and each colour class has exactly one vertex in $S$. Thus $S$ is a maximum clique in $G$ and in $G'$. Let $\B:=E(G')\cup S$. 

We now prove that \B\ is a bramble in $G'$. Each element of \B\ induces a connected subgraph in $G'$. Every pair of vertices in $S$ are adjacent. Say $x\in S$ and $pq\in E(G')$. Since $p$ and $q$ have distinct colours, $x$ is coloured differently from $p$ or $q$, and thus $x$ is adjacent to $p$ or $q$ (since $x\ne v$ and $x\ne w$). Hence $x$ touches $pq$. Say $pq\in E(G')$ and $rs\in E(G')$. If $\{p,q\}\cap\{r,s\}\neq\emptyset$ then $pq$ and $rs$ touch. So assume that $p,q,r,s$ are distinct. Thus there are at least two edges in $G$ between $\{p,q\}$ and $\{r,s\}$, one of which is not $vw$. 
%Without loss of generality, $p\neq v$. Since $r$ and $s$ have distinct colours, $p$ is coloured differently from $r$ or $s$, and thus $p$ is adjacent to $r$ or $s$ (since $p\ne v$). 
Hence $pq$ touches $rs$. Therefore \B\ is a bramble in $G'$. 

Let $H$ be a minimum hitting set of \B. If  $|H|\geq n-\alpha(G')+1$, then \B\ has order at least $n-\alpha(G')+1$, implying $\tw{G'}\geq n-\alpha(G')$ by \thmref{TreewidthBramble}, and we are done. Now assume that $|H|\leq n-\alpha(G')$. 

Since every edge of $G'$ is in \B, $H$ is a vertex cover of $G'$, and $V(G')-H$ is an independent set of $G'$. Thus $n-|H|\leq\alpha(G')$. Hence $|H|=n-\alpha(G')$, and 
$V(G')-H$ is a maximum independent set of $G'$. By \lemref{CMG-IndependentSets}, every independent set of $G'$ is $\{v,w\}$ or is an independent set of $G$. 
Since $\alpha(G')\geq3$, $\{v,w\}$ is not a maximum independent set. Hence
$V(G)-H$ is a maximum independent set of $G$. That is, $V(G)-H$ is a colour class in $G$, which implies that $H$ does not contain one vertex in $S$, and $H$ is not a hitting set of \B. This is the desired contradiction. 
\end{proof}

\begin{lemma}
\lemlabel{CMG-DeleteSpecialEdge}
Let $G$ be a complete multipartite graph with $\alpha(G)\geq2$, and at least one singleton colour class. Let $vw$ be an edge, where $v$ is in a singleton colour class, and $w$ is in a colour classes that contains at least two vertices. Then $\tw{G-vw}=\tw{G}$.
\end{lemma}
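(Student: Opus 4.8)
The plan is to mimic the bramble construction used in the proof of \lemref{CMG-DeleteEdge}, but using a maximum clique $S$ chosen to \emph{contain} the vertex $v$ (which is possible precisely because $v$ sits in a singleton colour class, so every maximum clique must include $v$). Write $G':=G-vw$ and say $G$ has $n$ vertices. By \twolemref{CMGequal}{CMG-IndependentSets}, $\tw{G}=n-\alpha(G)=n-\alpha(G')$, and trivially $\tw{G'}\leq\tw{G}$, so it suffices to exhibit a bramble of order $n-\alpha(G')+1$ in $G'$. Take $\B:=E(G')\cup S$ where $S$ is a maximum clique of $G$ with $v\in S$.

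First I would check that $\B$ is a bramble in $G'$. Connectedness of each element is immediate. For two clique vertices $x,y\in S$: they have distinct colours, and neither edge $xy$ can be $vw$ since $w\notin S$, so $x$ and $y$ remain adjacent in $G'$. For $x\in S$ and an edge $pq\in E(G')$: $x$ is coloured differently from at least one of $p,q$, say $p$; if $xp\neq vw$ then $x$ touches $pq$, and if $xp=vw$ then $x=v$ and $p=w$ (or vice versa), but then $x$ is coloured differently from $q$ as well — since $q$ shares $w$'s colour class or is in yet another class — hmm, actually one must be a little careful: the only way $x$ fails to be adjacent to both $p$ and $q$ in $G'$ is if $\{x,p\}=\{v,w\}$ and $x$ shares a colour with the remaining vertex; since $v$ is alone in its class this forces the remaining vertex to be in $w$'s class, but two vertices in $w$'s class are non-adjacent so they cannot form the edge $pq$ — so $x$ touches $pq$. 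For two edges $pq,rs\in E(G')$ with $\{p,q\}\cap\{r,s\}=\emptyset$: there are at least two $G$-edges between $\{p,q\}$ and $\{r,s\}$ (possibly more, but at least two unless both pairs are monochromatic, which they are not since they are edges), and at most one of these is $vw$, so they touch in $G'$. Hence $\B$ is a bramble.

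Next I would bound its order. Let $H$ be a minimum hitting set of $\B$. Since $E(G')\subseteq\B$, $H$ is a vertex cover of $G'$, so $V(G')-H$ is independent in $G'$, giving $n-|H|\leq\alpha(G')$, i.e.\ $|H|\geq n-\alpha(G')$. If $|H|\geq n-\alpha(G')+1$ we are done via \thmref{TreewidthBramble}. Otherwise $|H|=n-\alpha(G')$ and $V(G')-H$ is a \emph{maximum} independent set of $G'$. By \lemref{CMG-IndependentSets} it is either $\{v,w\}$ or a maximum independent set of $G$. If the latter, then $V(G')-H$ is a colour class of $G$ — but it cannot contain $v$ (since $v$ is a singleton colour class and $\alpha(G)\geq2$ means the maximum class is larger, unless it \emph{is} $\{v\}$, which would force $\alpha(G)=1$, contradiction), so $v\in H$; but then $H$ misses the vertex of $S$ in the colour class $V(G')-H$... wait, $S$ meets every colour class in exactly one vertex, and $v\in S$, so if $V(G')-H$ is a colour class then $H$ fails to hit the element $S\cap(V(G')-H)$... no: $H$ must hit $S$, and $H\supseteq S\setminus(V(G')-H)$, but it's cleaner to say $V(G)-H=V(G')-H$ is a colour class, hence disjoint from the clique $S$ except possibly one vertex, so $S\subseteq H$ fails at exactly... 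Actually the right contradiction: $V(G')-H$ being a colour class $C$ of $G$ means $H=V(G)\setminus C$, and since $S$ is a transversal of the colour classes, $|S\cap C|=1$, so that one vertex of $S$ lies in $C=V(G')-H$ and is not hit by $H$, contradicting that $H$ hits $\B\supseteq S$. The remaining subcase is $V(G')-H=\{v,w\}$, so $\alpha(G')=2$; then $H=V(G')\setminus\{v,w\}$ has size $n-2=n-\alpha(G')$, and again $H$ must hit $S$; but $v\in S$ and $v\notin H$, so $H$ hits $S$ only if $S\setminus\{v\}\cap H\neq\emptyset$, i.e.\ $|S|\geq2$; since $\alpha(G)=\alpha(G')=2$ and $G$ has at least two colour classes, indeed $|S|\geq2$, so this subcase does \emph{not} immediately give a contradiction — which signals that the argument needs the extra hypothesis that $w$'s colour class has size $\geq2$.

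The main obstacle I anticipate is exactly this $\alpha(G')=2$ corner case, and this is where the hypothesis "$w$ lies in a colour class of size at least two" must be used. When $\alpha(G')=2$ and $w$'s class $C_w$ has a second vertex $w'$, I would instead (or additionally) observe that $\{w,w'\}$ is then also a maximum independent set, and re-run the hitting-set analysis: a hitting set of size $n-2$ whose complement is $\{v,w\}$ forces... hmm. A cleaner fix: since $|C_w|\geq2$, choose the maximum clique $S$ to contain $v$ and to contain $w'$ (the other vertex of $C_w$) rather than $w$; then $\{v,w\}\cap S=\{v\}$, and if $V(G')-H=\{v,w\}$ then $v\notin H$ yet $v\in S$, and also $w'\in S$ with $w'\in H$ (as $w'\notin\{v,w\}$), so $H$ does hit $S$ — still no contradiction directly. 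The genuinely correct route, I believe, is: when $V(G')-H=\{v,w\}$ is forced, note $n-|H|=2=\alpha(G')$, so $G'$ has exactly two "large" independent sets worth of structure; but $v$ alone forms a colour class of $G'$ of size $1<\alpha(G')$, and $\{v,w\}$ can only be independent in $G'$ because we deleted $vw$ — meanwhile $S\ni v$ forces $H\not\ni v$, so $H$ hits $S$ via $S\setminus\{v\}$, which is nonempty. So this subcase simply cannot be excluded by hitting-set counting alone, and the resolution must be that when $|C_w|\geq2$ and $\alpha(G)=2$, the graph $G$ is small and explicit ($K_{1,1,\dots,1,2}$ type), for which $\tw{G-vw}=\tw{G}$ can be checked by hand; I would isolate that finite case and dispatch it directly. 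So the structure of my write-up: state the bramble, verify the three touching conditions, run the hitting-set argument to reduce to $V(G')-H\in\{\text{colour class},\{v,w\}\}$, kill the colour-class case via $v\in S$, and kill the $\{v,w\}$ case using $|C_w|\geq2$ (taking $S$ through $w'\in C_w$ as well, so that in that scenario $\{v,w\}$ cannot be the complement of a hitting set because $w$ would be non-adjacent in $G'$ to every vertex of $H\supseteq\{w'\}\cup\dots$, forcing $\alpha(G')\geq3$, contradicting $\alpha(G')=2$). That last twist — using a second vertex of $w$'s class to inflate the apparent independence number and derive a contradiction — is the one delicate point, and I expect it to be the crux of the proof.
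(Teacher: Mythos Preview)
Your overall strategy matches the paper's: construct the bramble $\B = E(G') \cup S$ with $S$ a maximum clique through $v$ (and avoiding $w$), verify touching, and analyse a minimum hitting set. Your bramble verification is essentially correct.

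The gap is in your hitting-set analysis of the $\{v,w\}$ case, and it stems from a misreading of the bramble. In $\B = E(G') \cup S$, each vertex $s \in S$ is its \emph{own} singleton bramble element $\{s\}$ --- you implicitly use this when you check that any two $x,y \in S$ touch. Consequently, a hitting set $H$ of $\B$ must contain \emph{every} vertex of $S$, not merely intersect $S$. So if $V(G')-H = \{v,w\}$, then $v \notin H$; but $v \in S$ means $\{v\} \in \B$, and $H$ fails to hit it. That is the entire argument for this case in the paper --- one line, no further hypotheses needed.

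Your phrase ``$H$ hits $S$ only if $S \setminus \{v\} \cap H \neq \emptyset$'' treats $S$ as a single bramble element, which is the source of the phantom obstacle. All of the subsequent contortions (choosing $S$ through $w'$, invoking $|C_w|\geq 2$ to force $\alpha(G')\geq 3$, isolating a finite case) are unnecessary. The hypothesis that $w$'s colour class has size at least $2$ is used only once, and earlier: it guarantees one can choose the transversal $S$ with $w \notin S$, so that edges of $S$ survive in $G'$ and the touching checks for $x\in S\setminus\{v\}$ go through cleanly.
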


\begin{proof}
Say $G$ has $n$ vertices. Let $G':=G-vw$. By \twolemref{CMGequal}{CMG-IndependentSets}, $\tw{G}=n-\alpha(G)=n-\alpha(G')$. Clearly $\tw{G'}\leq\tw{G}$. Thus it suffices to prove that $\tw{G'}\geq n-\alpha(G')$. 

By assumption, there is a set $S$ of vertices, such that $w\not\in S$,  and every colour class has exactly one vertex in $S$. Thus $v\in S$. Note that $S$ is a maximum clique in $G$ and in $G'$. Let $\B:=E(G')\cup S$. 

We now prove that \B\ is a bramble in $G'$. Each element of \B\ induces a connected subgraph in $G'$. Every pair of vertices in $S$ are adjacent. Consider $v\in S$ and $pq\in E(G')$. Since $v$ is in a singleton colour class, $v$ is adjacent to both $p$ and $q$ in $G$, and thus $v$ is adjacent to $p$ or $q$ in $G'$. Hence $v$ touches $pq$. Now consider $x\in S-\{v\}$ and $pq\in E(G')$. Since $p$ and $q$ have distinct colours, $x$ is coloured differently from $p$ or $q$, and thus $x$ is adjacent to $p$ or $q$ (since $x\ne v$ and $x\ne w$). Hence $x$ touches $pq$. Finally consider two edges $pq\in E(G')$ and $rs\in E(G')$. If $\{p,q\}\cap\{r,s\}\neq\emptyset$ then $pq$ and $rs$ touch. So assume that $p,q,r,s$ are distinct. Thus there are at least two edges in $G$ between $\{p,q\}$ and $\{r,s\}$, one of which is not $vw$. Hence $pq$ touches $rs$. Therefore \B\ is a bramble in $G'$. 

Let $H$ be a minimum hitting set of \B. If  $|H|\geq n-\alpha(G')+1$, then \B\ has order at least $n-\alpha(G')+1$, implying $\tw{G'}\geq n-\alpha(G')$ by \thmref{TreewidthBramble}, and we are done. Now assume that $|H|\leq n-\alpha(G')$. 

Since every edge of $G'$ is in \B, $H$ is a vertex cover of $G'$, and $V(G')-H$ is an independent set of $G'$. Thus $n-|H|\leq\alpha(G')$. Hence $|H|=n-\alpha(G')$, and $V(G')-H$ is a maximum independent set of $G'$. By \lemref{CMG-IndependentSets}, every independent set of $G'$ is $\{v,w\}$ or is an independent set of $G$. If $V(G')-H=\{v,w\}$ then $H$ does not contain $v$, and $H$ is not a hitting set of \B, which is a contradiction. Otherwise, $V(G)-H$ is a maximum independent set of $G$. That is, $V(G)-H$ is a colour class in $G$, which implies that $H$ does not contain some vertex in $S$, and $H$ is not a hitting set of \B. This is the desired contradiction. 
\end{proof}

\begin{theorem}
\thmlabel{CMG-Treewidth}
For all $k\geq1$, the following are equivalent for a complete multipartite graph $G$:\\
\hspace*{5mm}\textup{(a)} $G\in\TT{k}$\\
\hspace*{5mm}\textup{(b)} $G\in\WW{k}$\\
\hspace*{5mm}\textup{(c)} $G=K_{k+2}$, or $k\geq 3$ is odd and  $\displaystyle G=K_{\underbrace{2,\dots,2}_{(k+3)/2}}$.
\end{theorem}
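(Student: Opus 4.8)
The strategy is to prove the cycle of implications (b) $\Longrightarrow$ (a) $\Longrightarrow$ (c) $\Longrightarrow$ (b), exactly as in the proof of \thmref{CMG-Degree}, but now using the two edge-deletion lemmas (\twolemref{CMG-DeleteEdge}{CMG-DeleteSpecialEdge}) in place of \lemref{CMGconnected}. The implication (b) $\Longrightarrow$ (a) is immediate from \eqnref{WWinTT} together with the observation that a complete multipartite graph in \WW{k} has pathwidth exactly $k+1$ (by \lemref{Basic}), hence treewidth $k+1$ by \lemref{CMGequal}, so it lies in \TT{k}.

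\textbf{The implication (a) $\Longrightarrow$ (c).} Let $G\in\TT{k}$ be complete multipartite with $n$ vertices. By \lemref{Basic}, $\tw{G}\geq k+1$, and since deleting an edge from a graph in $\TT{k}$ would have to decrease the treewidth (else the edge-deleted graph is still a forbidden minor), in fact $\tw{G}=k+1$, so $\delta(G)=n-\alpha(G)=k+1$ by \lemref{CMGequal}. I would now argue, using the contrapositives of \twolemref{CMG-DeleteEdge}{CMG-DeleteSpecialEdge}, that $G$ admits no edge $vw$ whose deletion preserves treewidth; combined with \lemref{Basic} this rules out the following configurations: (i) $\alpha(G)\geq3$ with at least two non-singleton colour classes; and (ii) $\alpha(G)\geq2$ with a singleton colour class and a non-singleton colour class. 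So either $\alpha(G)\leq1$, giving $G=K_{k+2}$ (the complete graph, which is in \TT{k} by \eqnref{Complete} and \lemref{Basic}), or $\alpha(G)\geq2$ and all colour classes have the same size $b$; if $b\geq3$ then configuration (i) applies (there are at least two classes since $\alpha(G)=b<n$ forces $p\geq2$ classes), a contradiction, so $b=2$. Writing $p$ for the number of classes, $\delta(G)=2(p-1)=k+1$ gives $k$ odd and $p=(k+3)/2$, and $k\geq3$ since $p\geq2$ requires $k\geq1$ while $p\geq2$ and $b=2$ give $n=2p\geq4$, and for $k=1$ we would need $p=2$, i.e. $G=K_{2,2}=C_4$, which has treewidth $2$ but contracting an edge gives $K_{1,1,1}=K_3$ with treewidth $2$ as well — so $K_{2,2}\notin\TT{1}$, forcing $k\geq3$.

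\textbf{The implication (c) $\Longrightarrow$ (b).} For $G=K_{k+2}$, \lemref{CMGequal} gives $\pw{G}=k+1$, and every proper minor has at most $k+1$ vertices, hence pathwidth at most $k$; so $G\in\WW{k}$ by \lemref{Basic}. For $G=K_{2,\dots,2}$ with $(k+3)/2$ classes, \lemref{CMGequal} gives $\pw{G}=n-2=k+1$ where $n=k+3$. By \lemref{Basic} it suffices to show every proper minor $H$ has $\pw{H}\leq k$. By \lemref{CMG}, it is enough to check the single-step minors — a vertex deletion, an edge contraction, or an edge deletion. A vertex deletion gives $K_{1,2,\dots,2}$ on $n-1$ vertices with $\alpha=2$, so pathwidth $n-3=k$; an edge contraction gives $K_{1,1,2,\dots,2}$ on $n-1$ vertices with $\alpha=2$, pathwidth $n-3=k$; and an edge deletion gives a graph that is still complete multipartite on $n$ vertices but with a colour class of size $3$ (two classes of size $2$ merged by the non-edge), hence $\alpha=3$ and pathwidth $n-3=k$ by \lemref{CMGequal}.

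\textbf{Anticipated obstacle.} The only delicate point is the base-case bookkeeping in (a) $\Longrightarrow$ (c): I must verify that the hypotheses of \twolemref{CMG-DeleteEdge}{CMG-DeleteSpecialEdge} genuinely cover every complete multipartite graph that is \emph{not} of the form $K_{k+2}$ or $K_{2,\dots,2}$, and in particular handle the small cases $\alpha(G)=2$ and the case where $G$ has exactly one non-singleton class — this is where \lemref{CMG-DeleteSpecialEdge} is essential and where one must be careful that a suitable edge $vw$ of the required type actually exists. Everything else is a routine application of \twolemref{CMGequal}{CMG}, \lemref{Basic}, and \eqnref{WWinTT}.
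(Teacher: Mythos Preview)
Your overall cycle (b) $\Longrightarrow$ (a) $\Longrightarrow$ (c) $\Longrightarrow$ (b) matches the paper, and your (b) $\Longrightarrow$ (a) is identical to the paper's. But there is one genuine error and one point where your route diverges from the paper's in a way worth noting.

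\textbf{The error.} In (c) $\Longrightarrow$ (b), your treatment of the edge-deletion case is wrong. You write that deleting an edge $vw$ from $G=K_{2,\dots,2}$ yields ``a graph that is still complete multipartite on $n$ vertices but with a colour class of size $3$.'' It does not. If $v\in\{v,v'\}$ and $w\in\{w,w'\}$ are the relevant colour classes, then in $G-vw$ we have $v'\not\sim v$ and $v\not\sim w$, yet $v'\sim w$; so non-adjacency is no longer an equivalence relation and $G-vw$ is not complete multipartite. Hence \lemref{CMGequal} does not apply to $G-vw$, and your computation $\pw{G-vw}=n-3$ is unjustified. The paper handles this step by explicitly writing down a width-$k$ path decomposition of $G-vw$: with $x$ the partner of $v$, $y$ the partner of $w$, and $S:=V(G)\setminus\{v,w,x,y\}$, the sequence $(S\cup\{v,y\},\,S\cup\{x,y\},\,S\cup\{x,w\})$ works.

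\textbf{The divergence in (a) $\Longrightarrow$ (c).} The paper does not argue directly from \twolemref{CMG-DeleteEdge}{CMG-DeleteSpecialEdge} as you do. Instead it first observes that $G\in\TT{k}$ with $\tw{G}=k+1$ forces $\delta(G)=k+1$ and hence $G\in\DD{k}$ via \eqnref{TTinDD}, and then invokes \thmref{CMG-Degree} to put $G$ in the form $K_{a,b,\dots,b}$ (with $p\geq2$ copies of $b$, and $a=b$ if $p=2$). Only then does it case on $b\in\{1,2,\geq3\}$, applying the two edge-deletion lemmas to eliminate $b\geq3$ and the subcase $a=1,b=2$. This route automatically excludes $K_{2,2}$ (since $p\geq2$ forces at least three parts), so the paper never needs your ad hoc check that $K_{2,2}\notin\TT{1}$. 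Your direct approach is also valid in spirit, but your intermediate claim ``all colour classes have the same size $b$'' does not follow from ruling out configurations (i) and (ii); what follows is only that all classes are non-singletons, and you then need a separate step (which you essentially give) to force $\alpha(G)=2$.
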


\begin{proof}
(b) $\Longrightarrow$ (a): Say $G\in\WW{k}$. 
By \lemref{Basic}, $\pw{G}=k+1$. 
By \lemref{CMGequal}, $\tw{G}=k+1$.
By \eqnref{WWinTT}, $G\in\TT{k}$.

\medskip (a) $\Longrightarrow$ (c): Say $G\in\TT{k}$. 
By \lemref{Basic}, $\tw{G}\geq k+1$. 
If $\tw{G}\geq k+2$ then $\tw{G-v}\geq k+1$ for any vertex $v$ of $G$, implying $G\not\in\TT{k}$ by \lemref{Basic}. Now assume that $\tw{G}=k+1$. 
Thus $\delta(G)=k+1$ by \lemref{CMGequal}, and 
$G\in\DD{k}$ by \eqnref{TTinDD}. By \thmref{CMG-Degree}, $$G=K_{a,\underbrace{b,\dots,b}_p}\enspace,$$ 
for some $b\geq a\geq1$ and $p\geq2$, 
such that $k+1=a+(p-1)b$ and if $p=2$ then $a=b$.

\textbf{\boldmath Case.  $b=1$:} Then $a=1$ and $G=K_{k+2}$, and we are done. 

\textbf{\boldmath Case.  $b=2$:} Then $k+3=a+2p$.
If $a=1$, then by \lemref{CMG-DeleteSpecialEdge}, $\tw{G-e}=\tw{G}$ for some edge $e$ of $G$, implying $G\not\in\T{k}$ by \lemref{Basic}. 
Otherwise $a=2$. Thus $k=2p-1$ is odd, and $k\geq3$ since $p\geq2$. 
Hence 
$$\displaystyle G=K_{\underbrace{2,\dots,2}_{(k+3)/2}}\enspace.$$

\textbf{\boldmath Case.  $b\geq3$:} Then $\alpha(G)\geq3$. Since $p\geq2$, there are at least two colour class that contain at least two vertices, and by \lemref{CMG-DeleteEdge}, $\tw{G-e}=\tw{G}$ for some edge $e$ of $G$, implying $G\not\in\T{k}$ by \lemref{Basic}. 

\medskip (c) $\Longrightarrow$ (b): If $G=K_{k+2}$ then $G\in\WW{k}$ by \lemref{Basic}. Now assume that $k\geq 3$ is odd and 
$$\displaystyle G=K_{\underbrace{2,\dots,2}_{(k+3)/2}}\enspace.$$
Thus $\pw{G}=k+1$ by \lemref{CMGequal}. Suppose on the contrary that $G\not\in\WW{k}$. By \lemref{Basic}, $G$ has a proper minor $H$ with $\pw{H}\geq k+1$. By \lemref{CMG}, every minor of $G$ in the sequence from $G$ to $H$ has pathwidth at most $k+1$. Thus we can assume that $H$ was obtained from $G$ by a single edge contraction, a vertex deletion, or an edge deletion. Since an edge contraction or a vertex deletion produce another complete multipartite graph, and the minimum degree of a complete multipartite graph equals its pathwidth (\lemref{CMGequal}), the same proof used in \thmref{CMG-Degree} shows that $\pw{H}\leq k$. Now assume that $H=G-vw$ for some edge $vw$ of $G$. Let $x$ be the other vertex in the colour class that contains $v$. Let $y$ be the other vertex in the colour class that contains $w$. Let $S:=V(G)-\{v,w,x,y\}$. Then $(S\cup\{v,y\},S\cup\{x,y\},S\cup\{x,w\})$ is a path decomposition of $H$ with width $k$, which is the desired contradiction.
\end{proof}

\begin{open}
Complete multipartite graphs have diameter $2$. Are there generalisations of  \twothmref{CMG-Degree}{CMG-Treewidth} for all diameter-$2$ graphs in \DD{k} or in \TT{k}?
\end{open}

\section*{Acknowledgements} 

Thanks to Vida Dujmovi\'c who first proved \lemref{Vida}.

%%%%%%%%%%%%%%%%%%%%%%%%%%%%%%%%%%%%%%%%%%%%%%%%%%%%%%%%%%%%%%%%%%%%%%%%%%%%%%%
%\bibliographystyle{myNatbibStyle}
%\bibliography{myBibliography,myConferences}

\begin{thebibliography}{29}
\providecommand{\natexlab}[1]{#1}
\providecommand{\url}[1]{\texttt{#1}}
\providecommand{\urlprefix}{}
\expandafter\ifx\csname urlstyle\endcsname\relax
  \providecommand{\doi}[1]{doi:\discretionary{}{}{}#1}\else
  \providecommand{\doi}{doi:\discretionary{}{}{}\begingroup
  \urlstyle{rm}\Url}\fi

\bibitem[{Bodlaender(1998)}]{Bodlaender-TCS98}
\textsc{Hans~L. Bodlaender}.
\newblock A partial $k$-arboretum of graphs with bounded treewidth.
\newblock \emph{Theoret. Comput. Sci.}, 209(1-2):1--45, 1998.

\bibitem[{Bokal et~al.(2007)Bokal, Czabarka, Sz{\'e}kely, and
  Vr{\v{t}}o}]{BCSV-ENDM}
\textsc{Drago Bokal, {\'E}va Czabarka, L{\'a}szl{\'o}~A. Sz{\'e}kely, and
  Imrich Vr{\v{t}}o}.
\newblock Graph minors and the crossing number of graphs.
\newblock \emph{Electron. Notes Discrete Math.}, 28:169--175, 2007.

\bibitem[{Bokal et~al.(2006)Bokal, Fijav\v{z}, and Mohar}]{BFM-SJDM06}
\textsc{Drago Bokal, Ga\v{s}per Fijav\v{z}, and Bojan Mohar}.
\newblock The minor crossing number.
\newblock \emph{SIAM J. Discrete Math.}, 20(2):344--356, 2006.

\bibitem[{Bokal et~al.(2008)Bokal, Fijav\v{z}, and Wood}]{BFW}
\textsc{Drago Bokal, Ga\v{s}per Fijav\v{z}, and David~R. Wood}.
\newblock The minor crossing number of graphs with an excluded minor.
\newblock \emph{Electron. J. Combin.}, 15(R4), 2008.

\bibitem[{Chleb{\'{\i}}kov{\'a}(2002)}]{Chleb-DAM02}
\textsc{Janka Chleb{\'{\i}}kov{\'a}}.
\newblock The structure of obstructions to treewidth and pathwidth.
\newblock \emph{Discrete Appl. Math.}, 120(1-3):61--71, 2002.

\bibitem[{Diestel(2000)}]{Diestel00}
\textsc{Reinhard Diestel}.
\newblock \emph{Graph theory}, vol. 173 of \emph{Graduate Texts in
  Mathematics}.
\newblock Springer, 2nd edn., 2000.

\bibitem[{Dirac(1952)}]{Dirac52}
\textsc{Gabriel~A. Dirac}.
\newblock A property of {$4$}-chromatic graphs and some remarks on critical
  graphs.
\newblock \emph{J. London Math. Soc.}, 27:85--92, 1952.

\bibitem[{Dirac(1963)}]{Dirac-PLMS63}
\textsc{Gabriel~A. Dirac}.
\newblock On the four-colour conjecture.
\newblock \emph{Proc. London Math. Soc. (3)}, 13:193--218, 1963.

\bibitem[{Fijav{\v{z}}(2001)}]{Fijavz-PhD}
\textsc{Ga{\v{s}}per Fijav{\v{z}}}.
\newblock \emph{Graphs Minors and Connectivity}.
\newblock Ph.D. thesis, Univerity of Ljubljana, Slovenia, 2001.

\bibitem[{Fijav{\v{z}}(2002)}]{Fijavz-5}
\textsc{Ga{\v{s}}per Fijav{\v{z}}}.
\newblock Minor-minimal 5-connected projective planar graphs, 2002.
\newblock Submitted.

\bibitem[{Fijav{\v{z}}(2004)}]{Fijavz-EuJC04}
\textsc{Ga{\v{s}}per Fijav{\v{z}}}.
\newblock Minor-minimal 6-regular graphs in the {K}lein bottle.
\newblock \emph{European J. Combin.}, 25(6):893--898, 2004.

\bibitem[{Fijav{\v{z}}(2007)}]{Fijavz-JCTB}
\textsc{Ga{\v{s}}per Fijav{\v{z}}}.
\newblock Contractions of 6-connected toroidal graphs.
\newblock \emph{J. Combin. Theory Ser. B}, 97(4):553--570, 2007.

\bibitem[{Fijav{\v{z}} and Mohar(2003)}]{FM-Comb03}
\textsc{Ga{\v{s}}per Fijav{\v{z}} and Bojan Mohar}.
\newblock {$K\sb 6$}-minors in projective planar graphs.
\newblock \emph{Combinatorica}, 23(3):453--465, 2003.

\bibitem[{Hadwiger(1943)}]{Hadwiger43}
\textsc{Hugo Hadwiger}.
\newblock \"{U}ber eine {K}lassifikation der {S}treckenkomplexe.
\newblock \emph{Vierteljschr. Naturforsch. Ges. Z\"urich}, 88:133--142, 1943.

\bibitem[{Halin and Jung(1963)}]{HJ-MA63}
\textsc{Rudolf Halin and Heinz~A. Jung}.
\newblock \"{U}ber {M}inimalstrukturen von {G}raphen, insbesondere von
  {$n$}-fach zusammenh\"angenden {G}raphen.
\newblock \emph{Math. Ann.}, 152:75--94, 1963.

\bibitem[{Kriesell(2002)}]{Kriesell-GC02}
\textsc{Matthias Kriesell}.
\newblock A survey on contractible edges in graphs of a prescribed vertex
  connectivity.
\newblock \emph{Graphs Combin.}, 18(1):1--30, 2002.

\bibitem[{Lucena(2007)}]{Lucena-DAM07}
\textsc{Brian Lucena}.
\newblock Achievable sets, brambles, and sparse treewidth obstructions.
\newblock \emph{Discrete Appl. Math.}, 155(8):1055--1065, 2007.

\bibitem[{Mader(1968)}]{Mader68}
\textsc{Wolfgang Mader}.
\newblock Homomorphies\"atze f\"ur {G}raphen.
\newblock \emph{Math. Ann.}, 178:154--168, 1968.

\bibitem[{Mader(2005)}]{Mader-DM05}
\textsc{Wolfgang Mader}.
\newblock High connectivity keeping sets in graphs and digraphs.
\newblock \emph{Discrete Math.}, 302(1-3):173--187, 2005.

\bibitem[{Maharry(1999)}]{Maharry-JGT99}
\textsc{John Maharry}.
\newblock An excluded minor theorem for the octahedron.
\newblock \emph{J. Graph Theory}, 31(2):95--100, 1999.

\bibitem[{Ramachandramurthi(1997)}]{Rama-SJDM97}
\textsc{Siddharthan Ramachandramurthi}.
\newblock The structure and number of obstructions to treewidth.
\newblock \emph{SIAM J. Discrete Math.}, 10(1):146--157, 1997.

\bibitem[{Robertson and Seymour(2004)}]{RS-GraphMinors}
\textsc{Neil Robertson and Paul~D. Seymour}.
\newblock Graph minors {I--XX}.
\newblock \emph{J. Combin. Theory Ser. B}, 1983--2004.

\bibitem[{Seymour and Thomas(1993)}]{SeymourThomas-JCTB93}
\textsc{Paul~D. Seymour and Robin Thomas}.
\newblock Graph searching and a min-max theorem for tree-width.
\newblock \emph{J. Combin. Theory Ser. B}, 58(1):22--33, 1993.

\bibitem[{Tutte(1961)}]{Tutte-NAW61}
\textsc{William~T. Tutte}.
\newblock A theory of {$3$}-connected graphs.
\newblock \emph{Nederl. Akad. Wetensch. Proc. Ser. A 64 = Indag. Math.},
  23:441--455, 1961.

\bibitem[{Tutte(1963)}]{Tutte63}
\textsc{William~T. Tutte}.
\newblock How to draw a graph.
\newblock \emph{Proc. London Math. Soc.}, 13(3):743--768, 1963.

\bibitem[{Wagner(1937)}]{Wagner37}
\textsc{Klaus Wagner}.
\newblock {\"U}ber eine {E}igenschaft der ebene {K}omplexe.
\newblock \emph{Math. Ann.}, 114:570--590, 1937.

\bibitem[{Whitney(1933)}]{Whitney-AJM33c}
\textsc{Hassler Whitney}.
\newblock 2-{I}somorphic {G}raphs.
\newblock \emph{Amer. J. Math.}, 55(1-4):245--254, 1933.

\bibitem[{Woodall(1992)}]{Woodall-JGT92}
\textsc{Douglas~R. Woodall}.
\newblock A short proof of a theorem of {D}irac's about {H}adwiger's
  conjecture.
\newblock \emph{J. Graph Theory}, 16(1):79--80, 1992.

\bibitem[{Zeidl(1958)}]{Zeidl58}
\textsc{Bernhardine Zeidl}.
\newblock \"{U}ber 4- und 5-chrome {G}raphen.
\newblock \emph{Monatsh. Math.}, 62:212--218, 1958.

\end{thebibliography}
%%%%%%%%%%%%%%%%%%%%%%%%%%%%%%%%%%%%%%%%%%%%%%%%%%%%%%%%%%%%%%%%%%%%%%%%%%%%%%%

\def\soft#1{\leavevmode\setbox0=\hbox{h}\dimen7=\ht0\advance \dimen7
  by-1ex\relax\if t#1\relax\rlap{\raise.6\dimen7
  \hbox{\kern.3ex\char'47}}#1\relax\else\if T#1\relax
  \rlap{\raise.5\dimen7\hbox{\kern1.3ex\char'47}}#1\relax \else\if
  d#1\relax\rlap{\raise.5\dimen7\hbox{\kern.9ex \char'47}}#1\relax\else\if
  D#1\relax\rlap{\raise.5\dimen7 \hbox{\kern1.4ex\char'47}}#1\relax\else\if
  l#1\relax \rlap{\raise.5\dimen7\hbox{\kern.4ex\char'47}}#1\relax \else\if
  L#1\relax\rlap{\raise.5\dimen7\hbox{\kern.7ex
  \char'47}}#1\relax\else\message{accent \string\soft \space #1 not
  defined!}#1\relax\fi\fi\fi\fi\fi\fi} \def\Dbar{\leavevmode\lower.6ex\hbox to
  0pt{\hskip-.23ex \accent"16\hss}D}

%%%%%%%%%%%%%%%%%%%%%%%%%%%%%%%%%%%%%%%%%%%%%%%%%%%%%%%%%%%%%%%%%%%%%%%%%%%%%%%
\appendix
\section{Graphs with Minimum Degree Four}
\applabel{DegreeFour}
%%%%%%%%%%%%%%%%%%%%%%%%%%%%%%%%%%%%%%%%%%%%%%%%%%%%%%%%%%%%%%%%%%%%%%%%%%%%%%%

In this appendix we prove the following result.

\begin{theorem}
\thmlabel{DegreeFour}
Every graph with minimum degree at least $4$ contain a $4$-connected minor.
\end{theorem}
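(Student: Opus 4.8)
The plan is to argue by minimality. Suppose the theorem fails; among all graphs with minimum degree at least $4$ and no $4$-connected minor, let $H$ be one with the fewest vertices, and, subject to that, the fewest edges. Minimality forces the familiar structure of \lemref{BasicDegree}: $H$ is connected (else a component would be a smaller counterexample); $\delta(H)=4$ (else delete an edge); and every edge of $H$ has an endpoint of degree $4$ (else deleting it gives a counterexample with the same number of vertices and fewer edges, since a $4$-connected minor of $H-e$ is a $4$-connected minor of $H$), so the vertices of degree at least $5$ form an independent set. If $|V(H)|\leq 5$ then $H=K_5$, which is $4$-connected; so $|V(H)|\geq 6$, and it suffices to obtain a contradiction from the assumption that $H$ has a vertex cut $S$ with $|S|\leq 3$.

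Next I would fix a vertex cut $S$ of minimum size $s\leq 3$ and a separation $(V_1,V_2)$ with $V_1\cap V_2=S$ for which $A:=V_1\setminus S$ is as small as possible, so that $H[A]$ is connected. Minimality of $|S|$ yields the usual structural facts: every vertex of $S$ has a neighbour in $A$ and a neighbour in $B:=V_2\setminus S$; both $H[V_1]$ and $H[V_2]$ are connected (if $H-A$ had several components, the trace of $S$ on one of them would be a vertex cut of size less than $s$); and since every vertex of $A$ has degree at least $4$ with all neighbours in $A\cup S$, we get $|A|\geq 5-s\geq 2$, and symmetrically $|B|\geq 5-s\geq 2$.

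The heart of the argument is to construct a proper minor of $H$ with minimum degree at least $4$, contradicting the minimality of $H$. The idea is to collapse one side, say $V_1$: using \obsref{DegreeChange}, contract a suitably chosen connected subgraph of $H[V_1]$ that contains all of $A$ (and, if needed, a few vertices of $S$) so that the resulting merged vertex, together with the vertices of $S$ that survive, still has degree at least $4$, and no vertex of $B$ drops below degree $4$. Contracting a whole connected subgraph, rather than a single edge, is essential: a single contraction can always leave a common neighbour of degree $3$ (for example, every single edge-contraction of $\overline{K_3}*(K_2\cup K_2)$ does, yet two contractions reach $K_5$), whereas absorbing a larger connected part of $A$ spreads out and cancels these losses, and the independence of the degree-$\geq 5$ vertices caps how many common-neighbour degrees can drop at once. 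When $|V_1|$ is large this reduction strictly decreases the number of vertices; when both sides are small, $|V(H)|$ is bounded by an absolute constant and one finishes by inspecting the finitely many remaining graphs directly.

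I expect the main obstacle to be precisely this bookkeeping: choosing the connected subgraph to contract so that the degree change computed by \obsref{DegreeChange} never produces a vertex of degree $3$, and in particular handling the small separated pieces, which are exactly the ``tight'' local configurations (built around $K_5$ and $\overline{K_3}*(K_2\cup K_2)$). Organising the remaining work by $|S|\in\{1,2,3\}$ and, within each, by the degrees and adjacencies of the vertices of $S$ and of their neighbours on the smaller side reduces every case to a routine application of \obsref{DegreeChange}, but the case analysis is the genuine content of the proof.
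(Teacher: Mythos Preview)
Your setup through isolating a minimum cut $S$ of size $s\leq 3$ is fine, but the reduction step---contracting a connected piece of $V_1$ so that the resulting minor still has minimum degree at least $4$---is not just bookkeeping; it is where your plan has a genuine gap. The obstruction is structural: when $s=3$ and each $s_i$ has few neighbours in $B$, every way of collapsing $A$ (with or without some of $S$) produces a merged vertex of degree at most $3$, and the surviving $S$-vertices can also drop to degree $3$. Your ``inspect finitely many small graphs'' fallback does not rescue this, because the bad configurations near $S$ are governed by how $S$ attaches to $B$, not by $|B|$; they occur for arbitrarily large $B$. You also cannot shortcut by invoking $\DD{3}=\{K_5,K_{2,2,2}\}$, since in the paper that result is \emph{deduced from} the theorem you are trying to prove.

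The paper sidesteps the whole difficulty by strengthening the inductive hypothesis: it proves that every graph on at least five vertices in which the vertices of degree at most $3$ induce a clique has a $4$-connected minor. With this hypothesis the $3$-cut case is clean. One checks that $G_1$ contains a cycle together with three disjoint paths to $S$ (Menger), contracts $G_1$ down to a triangle on $S$, and passes to $G_2$ with the triangle $s_1s_2s_3$ added; in this smaller graph the only possible low-degree vertices lie in $S$ and hence form a clique, so the stronger hypothesis applies directly. The cases $|S|\leq 2$ and the disposal of the low-degree clique $K$ are then short. This single idea---relaxing ``$\delta\geq4$'' to ``the low-degree vertices form a clique''---is what your plan is missing, and it is exactly what replaces the open-ended case analysis you anticipate.
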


The following stronger result enables an inductive proof of \thmref{DegreeFour}.

\begin{lemma}
Let $G$ be a graph with at least $5$ vertices, such that the vertices of degree at most $3$ induce a clique. Then $G$ contains a $4$-connected minor.
\end{lemma}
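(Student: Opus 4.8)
The plan is to prove the stronger statement by induction on $|V(G)|$, working with a minimal counterexample: a graph $G$ on at least $5$ vertices whose set $Q$ of low-degree vertices (those of degree at most $3$) induces a clique, such that $G$ has no $4$-connected minor, while every graph with fewer vertices satisfying the hypothesis does have one. First I would settle the base case $|V(G)|=5$. Here $Q$ is a clique, so $|Q|\le4$; and if $Q\ne\emptyset$, picking $v\in Q$, which has $\deg(v)\le 3<|V(G)|-1$, we get a non-neighbour $u$ of $v$, and then $\deg(u)\le|V(G)|-2=3$, so $u\in Q$, contradicting that $Q$ is a clique. Hence $Q=\emptyset$, so $G$ has minimum degree at least $4$ on $5$ vertices, i.e.\ $G=K_5$, which is its own $4$-connected minor. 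So from now on $|V(G)|\ge6$.

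Next I would peel off connectivity. If $G$ is disconnected, then since $Q$, being a clique, meets only one component, some component $C$ contains no low-degree vertex of $G$, hence has minimum degree at least $4$ (degrees are inherited), hence $|V(C)|\ge5$, and $|V(C)|<|V(G)|$; the inductive hypothesis gives a $4$-connected minor of $C\subseteq G$, a contradiction. So $G$ is connected, and if it is $4$-connected we are done; thus $G$ has a separator $S$ with $|S|=\kappa(G)\le3$. Write $V(G)=A\cup S\cup B$, where $A$ is the vertex set of one component of $G-S$ and $B\ne\emptyset$; minimality of $S$ forces every vertex of $S$ to have a neighbour in $A$ and a neighbour in $B$.

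The core of the argument is to pass to a smaller graph built from the $B$-side. Let $H:=G[B\cup S]+E^\ast$, where $E^\ast$ is a set of edges inside $S$ realised as a minor by absorbing connected subgraphs of $A$ into branch sets of vertices of $S$. When $|S|\le2$ the at most one missing edge of $S$ is realised by a path through the connected set $A$ (using that each endpoint has a neighbour in $A$); and when $|S|=3$ with at most two absent edges on $S$, those absent edges share a common vertex $s$, so putting $\{s\}\cup A$ into $s$'s branch set realises both at once. In all these cases $H=G[B\cup S]+\mathrm{clique}(S)$ is a minor of $G$. Moreover $|V(H)|=|B\cup S|<|V(G)|$ since $A\ne\emptyset$, and $|V(H)|\ge5$ because any vertex of $B$ has at least $4$ neighbours and all of them lie in $B\cup S$ (so $B$-vertices keep their degrees and no new low-degree vertex is created in $B$). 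Hence every low-degree vertex of $H$ lies in $S$, and since $S$ is a clique in $H$ these form a clique; the inductive hypothesis applied to $H$ then yields a $4$-connected minor of $H$, hence of $G$ — contradiction.

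The \textbf{main obstacle} is the remaining case $|S|=3$ with $S$ independent in $G$ (three absent edges), where $\mathrm{clique}(S)$ need not be realisable through $A$: if some $s_i\in S$ has just one neighbour in $A$, the best one can route through $A$ is a path $s_i\!-\!s_j\!-\!s_k$ on $S$, leaving one non-edge $s_is_k$, so that a priori $H=G[B\cup S]+\{s_is_j,s_js_k\}$ might have $s_i$ and $s_k$ both low-degree with $s_is_k\notin E(H)$. This is where the hypothesis on $G$ must be used: since $s_is_k\notin E(G)$ and $Q$ is a clique, one of $s_i,s_k$ — say $s_k$ — has degree at least $4$ in $G$; and since a vertex of $S$ whose branch set absorbs none of $A$ retains all of its neighbours in $B\cup S$, one argues that either $s_k$ still has degree at least $4$ in $H$, or $s_k$ has at least two neighbours in $A$, in which case a different routing (or a different choice of which two edges of $S$ to realise, or of which side is $A$) removes the bad configuration. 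Turning this bookkeeping into a clean, exhaustive case analysis — choosing the separation, the side $A$, and the edge set $E^\ast$ so that $H$ provably satisfies the clique hypothesis — is where essentially all the work lies; the connectivity reductions and the cases $|S|\le2$ and $|S|=3$ with a non-independent $S$ are routine.
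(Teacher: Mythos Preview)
Your overall strategy---take a minimum separator $S$, keep the $B$-side, and add whatever edges on $S$ you can realise by contracting through the $A$-side---is sound and does work for $|S|\le2$ and for $|S|=3$ with $S$ not independent. Two points, however.

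First, a small omission: the claim ``any vertex of $B$ has at least $4$ neighbours'' requires $B\cap Q=\emptyset$, which you never arranged. Since $Q$ is a clique, $Q\setminus S$ lies in a single component of $G-S$; you must choose $A$ to be that component. The paper makes exactly this ``$K\subseteq G_1$'' choice.

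Second, and more seriously, the hard case $|S|=3$ with $S$ independent is not just bookkeeping, and your sketch of it is off in places. For instance, ``if some $s_i$ has just one neighbour in $A$, the best one can route is a path $s_i\!-\!s_j\!-\!s_k$'' is false: one can equally well put the median of the tree on $\{s_1,s_2,s_3\}$ into $s_i$'s branch set and realise $s_is_j,s_is_k$ instead. And the dichotomy you hint at (``either $s_k$ has degree $\ge4$ in $H$, or $s_k$ has $\ge2$ neighbours in $A$'') does not by itself tell you which two edges to realise. Your plan can in fact be completed---the clean split is whether $G[A\cup S]$ contains a cycle (then Menger gives three disjoint cycle--$S$ paths and hence the full triangle on $S$) or is a tree (then each $s_i$ is a leaf, so has exactly one $A$-neighbour; since at most one $s_i$ lies in $Q$, at most one has $b_i\le2$, and choosing \emph{that} one as centre realises the two edges incident to it and leaves the other two with $H$-degree $\ge4$)---but none of this is in your write-up.

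The paper avoids this difficulty by reorganising the argument. After disposing of $|S|\le2$ it does \emph{not} attack the $3$-separator directly; instead it first eliminates $K$ altogether by local contractions, case-splitting on $|K|\in\{1,2,3\}$ and the structure of $N(K)$ (e.g.\ if $|K|=2$ contract $v_1v_2$, or a triangle $v_1v_2u_1$ when $|N(K)|=2$; if $|K|\in\{1,3\}$ either delete $K$ when $N(K)$ is a triangle or contract an edge from $K$ to a low-degree vertex of $N(K)$). Only once $\delta(G)\ge4$ does it treat a $3$-separation $\{G_1,G_2\}$: now $G_1$ cannot be a forest (a tree with a degree-$4$ vertex has $\ge4$ leaves, but only the three vertices of $S$ can be leaves), so $G_1$ contains a cycle $C$, and Menger gives three disjoint $C$--$S$ paths, contracting to the full triangle on $S$. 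This makes the $3$-separator step uniform, at the cost of the small $|K|$-casework up front.
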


\begin{proof}
Let $G$ be a counterexample with the minimum number of vertices.
Let $K=\{v_1,\dots,v_{|K|}\}$ denote the (possibly empty) clique of vertices of degree at most $3$.

In each case below we exhibit a proper minor $G'$ of $G$, for which it is easy to verify that the vertices of degree at most $3$ induce a clique. Moreover, $|V(G')|\geq5$ since there is a vertex of degree at least $4$ in $G$, whose degree does not decrease in $G'$. Thus $G'$ satisfies the conditions of the lemma, which contradicts the minimality of $G$.

If $e$ is an edge incident to a vertex of degree at most $2$, then let $G':=G/e$. Now assume that $\delta(G)\geq3$.

Let $S$ be a minimal separator in $G$, and let $\{G_1,G_2\}$ be the corresponding separation, so that $S=V(G_1 \cap G_2)$.
Without loss of generality, $K \subseteq G_1$. If $|S|=1$, then let $G':=G_2$.
If $|S|=2$, say $S=\{s_1,s_2\}$, then there exists an $s_1$--$s_2$ path in $G_1$, and so $G':= G_2 + s_1s_2$ is the desired minor.

Thus $G$ is $3$-connected and each vertex in $K$ has degree $3$. 
Let $N(K)$ denote the subgraph induced by the neighbours of $K$.

First suppose that $|K|=2$. Both $v_1$ and $v_2$ have at least two neighbours in $N(K)$. If $|N(K)| \ge 3$, then at most one vertex is adjacent to both $v_1$ and $v_2$. Let $G':=G/v_1v_2$. If, on the other hand, $N(K)=\{u_1,u_2\}$, then let $G'$ be obtained from $G$ by contracting the triangle $v_1v_2u_1$. Since $G$ has a vertex of degree at least $4$ other than $u_1,u_2$, so does $G'$.

If $|K|=1$ or $|K|=3$ then $|N(K)|=3$. Every vertex of $N(K)$ is adjacent to exactly one vertex of $K$, so $G$ has more than $|K|+|N(K)|$ vertices. If $N(K)$ induces a clique, then let $G':=G-V(K)$. Otherwise let $u_1$ be a vertex whose degree in $N(K)$ is as small as possible (is at most $1$) and let $v_1$ be its unique neighbour in $K$.
In this case, let $G':=G/u_1v_1$. Therefore we may assume that $G$ is $3$-connected with $\delta(G) \geq 4$. 

Suppose that $G$ contains a $3$-separation $\{G_1,G_2\}$ with separator  $S=\{s_1,s_2,s_3\}=V(G_1\cap G_2)$. Consider the subgraph $G_1$. Each vertex in $S$ has degree at least $1$ in $G_1$. Now $V(G_1-S)\neq\emptyset$ and every vertex in $G_1-S$ has degree at least $4$ in $G_1$. A forest that contains a vertex of degree at least $4$ has at least $4$ leaves. Thus $G_1$ is not a forest. Hence $G_1$ contains a cycle $C$. By Menger's Theorem, there are three disjoint $C$--$S$ paths in $G_1$. By contracting $C$ together with these three paths to a triangle on $S$, observe that $G':=G_2+s_1s_2+s_1s_3+s_2s_3$ is the desired minor. Hence $G$ has no $3$-separation and is thus $4$-connected.
\end{proof}

For completeness we include a proof of the following theorem of \citet{HJ-MA63} based on classical results by Wagner, Whitney and Tutte.

\begin{theorem}[\citep{HJ-MA63}]
\thmlabel{HJ}
Every $4$-connected graph contains $K_5$ or $K_{2,2,2}$ as a minor.
\end{theorem}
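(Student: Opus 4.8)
The plan is to combine three classical results: \citet{Wagner37}'s structure theorem for $K_5$-minor-free graphs, \citet{Whitney-AJM33c}'s theorem that a $3$-connected planar graph has an essentially unique embedding, and \citet{Tutte63}'s characterisation of the facial cycles of a $3$-connected planar graph as exactly its induced non-separating cycles. If $G$ contains $K_5$ as a minor we are done, so assume it does not. By Wagner's theorem, $G$ is obtained from planar graphs and the Wagner graph $V_8$ (the cubic graph on $8$ vertices) by repeatedly forming clique-sums over cliques of order at most $3$. Since $G$ is $4$-connected it cannot be a clique-sum in which both sides strictly contain the shared clique --- that clique would be a separator of order at most $3$ --- so $G$ itself is planar or isomorphic to $V_8$; and $V_8$, being cubic, is not $4$-connected. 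Hence $G$ is a $4$-connected planar graph, and since the only $4$-connected graph on at most five vertices is $K_5$ (already dealt with) we may assume $|V(G)|\geq 6$. It remains to exhibit a $K_{2,2,2}$ minor in every such $G$.

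Fix the planar embedding of $G$, unique up to reflection by Whitney's theorem, and pick any vertex $v$. Deleting $v$ merges the faces incident with $v$ into one face of the $3$-connected planar graph $G-v$, which by Tutte's theorem is bounded by an induced non-separating cycle $C$ of $G-v$ with $N(v)\subseteq V(C)$. A $3$-connected graph is not outerplanar, so $J:=V(G)-v-V(C)$ is non-empty; assume for now that $G[J]$ is connected. The key observation is that $4$-connectivity forces every vertex of $C$ to have a neighbour in $J$: otherwise such a vertex would have all its neighbours among $v$ and its two neighbours along $C$ (there are no chords of $C$, as $C$ is induced), hence degree at most $3$. Now pick four vertices $q_1,q_2,q_3,q_4$ of $N(v)$ occurring in this cyclic order on $C$ (with $q_5:=q_1$), and let $Q_i$ be the sub-path of $C$ running from $q_i$ up to but not including $q_{i+1}$, so that $Q_1,\dots,Q_4$ partition $V(C)$. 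Contracting each $Q_i$ and contracting $G[J]$, the resulting minor has six vertices, which (abusing notation) we still call $v,J,Q_1,Q_2,Q_3,Q_4$: the four arcs form a $4$-cycle $Q_1Q_2Q_3Q_4$ (consecutive arcs are adjacent, and non-consecutive ones are not, since $C$ is induced and $v\notin V(C)$); $v$ is adjacent to all four arcs, since $q_i\in Q_i\cap N(v)$; $J$ is adjacent to all four arcs, since each $Q_i$ is non-empty and every vertex of $C$ has a neighbour in $J$; and $v$ is not adjacent to $J$, since all neighbours of $v$ lie on $C$. This minor is precisely $K_{2,2,2}$, with colour classes $\{v,J\}$, $\{Q_1,Q_3\}$, $\{Q_2,Q_4\}$.

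The main obstacle is the standing assumption that $G[J]$ is connected, i.e.\ that $V(C)$ is not a genuine separator of $G$. In general one must instead take a single component $D$ of $G[J]$ --- which still satisfies $N(D)\subseteq V(C)$ and $|N(D)|\geq 4$ by $4$-connectivity --- and run the same construction with $D$ as the second pole; for this, $C$ must be cut into four arcs each meeting both $N(v)$ and $N(D)$. I expect this to follow from the planar nesting of the components of $G[J]$ inside the disc bounded by $C$ (an innermost component, together with a vertex of the face it occupies, can be used to reduce to a smaller or simpler instance), or else by choosing the starting vertex $v$ more carefully so that its associated cycle $C$ is non-separating. A few degenerate cases --- such as $|V(G)|=6$ (where $G=K_{2,2,2}$), or $C$ of length $4$ --- are quickest to settle by direct inspection.
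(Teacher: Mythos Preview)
Your approach is essentially the paper's: reduce to the planar case via Wagner, apply Tutte's characterisation of facial cycles to the $3$-connected planar graph $G-v$, and contract the outside of the link cycle $C$ to a single vertex opposite $v$.

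The ``main obstacle'' you worry about in your final paragraph is a phantom. You already invoked Tutte's theorem to say that $C$ is an induced \emph{non-separating} cycle of $G-v$; but ``non-separating'' means precisely that $(G-v)-V(C)$ is connected, and $(G-v)-V(C)=G[J]$. So $G[J]$ is \emph{always} connected --- there is no further case to treat. (The paper records this in one line: ``Since $C$ is nonseparating, $(G-v)-C$ is connected.'') Once you delete the final paragraph, your argument is complete and matches the paper's proof almost verbatim; the only cosmetic difference is that the paper checks the $J$--$Q_i$ adjacencies only at the four chosen vertices $w_i\in N(v)$ rather than at every vertex of $C$.
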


\begin{proof}
Suppose that $G$ is $4$-connected and has no $K_5$-minor. Thus $G$ is planar by Wagner's characterisation of graphs with no $K_5$-minor \citep{Wagner37}. Fix a plane embedding of $G$. Let $v$ be any vertex of $G$. Let $w_1,w_2,w_3,w_4$ be four of the neighbours of $v$ in cyclic order around $v$. Let $C$ be the facial cycle in the induced plane embedding of $G-v$, such that the interior of $C$ contains $v$. \citet{Whitney-AJM33c} proved that every $3$-connected planar graph has a unique plane embedding. Moreover, \citet{Tutte63} proved that the faces of this embedding are exactly the induced nonseparating cycles. Since $G-v$ is $3$-connected, each face in the induced plane embedding of $G-v$ is an induced nonseparating cycle. In particular, $C$ is induced and nonseparating in $G-v$. Since $C$ is separating, $(G-v)-C$ is connected. Since $C$ is induced, each vertex $w_i$ has  exactly two neighbours in $C$, and at least one neighbour in  $(G-v)-C$. Hence, contracting $(G-v)-C$ to a single vertex, and contracting $C$ to the $4$-cycle $(w_1,w_2,w_3,w_4)$ produces a $K_{2,2,2}$-minor in $G$.
\end{proof}

Note that \thmref{HJ} can also be concluded from a theorem of \citet{Maharry-JGT99}, who proved that every $4$-connected graph with no $K_{2,2,2}$ minor is isomorphic to the square of an odd cycle, which is easily seen to contain a $K_5$-minor. \twothmref{DegreeFour}{HJ} imply:

\begin{corollary}
Every graph with minimum degree at least $4$ contain $K_5$ or $K_{2,2,2}$ as a minor.
\end{corollary}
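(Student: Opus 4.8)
The plan is simply to concatenate \twothmref{DegreeFour}{HJ}. Let $G$ be a graph with $\delta(G)\geq4$. First I would invoke \thmref{DegreeFour} to extract a $4$-connected minor $H$ of $G$. Next I would apply \thmref{HJ} to $H$: being $4$-connected, $H$ contains $K_5$ or $K_{2,2,2}$ as a minor. Finally I would appeal to the transitivity of the minor relation: if $H$ is a minor of $G$ and $K$ is a minor of $H$, then $K$ is a minor of $G$, since a sequence of edge contractions, edge deletions, and vertex deletions producing $H$ from $G$, followed by one producing $K$ from $H$, is itself such a sequence. Hence $G$ contains $K_5$ or $K_{2,2,2}$ as a minor.

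There is no real obstacle at this stage: all the substance has already been absorbed into \thmref{DegreeFour} (the inductive minimal-counterexample analysis of separators of size $1$, $2$, and $3$ in the preceding lemma) and into \thmref{HJ} (the argument via Wagner's excluded-$K_5$ characterisation together with the Whitney and Tutte facts about the unique embedding of a $3$-connected planar graph). The corollary is a one-line deduction, recorded for convenient reference; it is precisely the combined statement used in the proof of \propref{DDthree}, where the conclusion that every minimum-degree-$4$ graph contains $K_5$ or $K_{2,2,2}$ as a minor is what drives the determination of $\DD{3}$.
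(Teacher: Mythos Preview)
Your proposal is correct and matches the paper's approach exactly: the paper records the corollary with no written proof beyond the line ``\twothmref{DegreeFour}{HJ} imply,'' and your argument is precisely the intended one-line concatenation via transitivity of the minor relation.
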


%Let $vw$ be an edge of $G$. Let $A,B,C$ be three internally disjoint paths between $v$ and $w$ avoiding the edge $vw$, such that $A$ is in the interior of the cycle $B\cup vw$, and $C$ is in the exterior of $B\cup vw$. There is some internal vertex $x\in A$ and some internal vertex $y\in C$. The cycle $A\cup C$ defines two paths between $x$ and $y$. Since $G$ is $4$-connected, there are two internally disjoint paths between $x$ and $y$, every internal vertex of which is not on the cycle $A\cup C$.  By planarity, we obtain a $K_{2,2,2}$-minor, as illustrated in \figref{K222}. \Figure{K222}{\includegraphics{K222}}{Construction of a $K_{2,2,2}$-minor in a $4$-connected planar graph.}

\end{document}